\def\X{\mathbf{X}}
\def\J{\mathcal{J}}
\def\K{\mathcal{K}}
\def\hK{\widehat{\mathcal{K}}}
\def\hJ{\widehat{\mathcal{J}}}
\def\hf{\widehat{f}}
\def\<{{\langle}}
\def\>{{\rangle}}
\def\1{{\mathbf 1}}
\def\R{{\mathbb{R}}}
\def\E{{\mathbb{E}}}
\def\P{{\mathbb{P}}}
\def\L{{\mathbb{L}}}
\def\S{{\mathbb{S}}}
\def\hS{\widehat{\mathbb{S}}}
\newcommand{\eref}[1]{(\ref{#1})}
\newcommand{\pa}[1]{\left({#1}\right)}
\newcommand{\cro}[1]{\left[{#1}\right]}
\newcommand{\ac}[1]{\left\{{#1}\right\}}
\newcommand{\norm}[1]{\left\|{#1}\right\|}
\newtheorem{thrm}{Theorem}[section]
\newtheorem{prte}[thrm]{Proposition}
\newtheorem{lemma}[thrm]{Lemma}
\newtheorem{defi}{Definition}[section]
\newcommand{\pen}{\mathrm{pen}}
\newcommand{\crit}{\mathrm{Crit}}
\newcommand{\supp}{\mathrm{supp}}
\newcommand{\AIC}{\textsf{AIC}}
\newcommand{\BIC}{\textsf{BIC}}
\newcommand{\CV}{\textsf{CV}}
\newcommand{\LinSelect}{\textsf{LinSelect}}
\newcommand{\GGMSelect}{\textsf{GGMselect}}
\newcommand{\range}{\mathrm{range}}
\def\cI{\mathcal{I}}
\def\R{\mathbb{R}}
\def\E{\mathbb{E}}
\def\hbeta{\widehat{\beta}}
\def\L{\mathcal{L}}
\def\eps{\varepsilon}
\def\argmin{\mathop{\mathrm{argmin}}}
\begin{document}

\begin{frontmatter}
\title{High-dimensional regression with unknown variance}
\runtitle{Regression with unknown variance}

\begin{aug}
\author{\fnms{Christophe} \snm{Giraud}\ead[label=e1]{
christophe.giraud@polytechnique.edu}},
\author{\fnms{Sylvie}
\snm{Huet}\ead[label=e2]{
sylvie.huet@jouy.inra.fr}}
\and
\author{\fnms{Nicolas}
\snm{Verzelen}\ead[label=e3]{
nicolas.verzelen@supagro.inra.fr}}
\runauthor{Giraud, Huet and Verzelen}
\affiliation{\'Ecole Polytechnique and Institut National de Recherche en Agronomie}
 \address{CMAP, UMR CNRS 7641, Ecole Polytechnique, Route de Saclay, 91128 Palaiseau Cedex, FRANCE. 
 \printead{e1}}
\address{UR341 MIA, INRA, F-78350 Jouy-en-Josas, FRANCE \printead{e2}}
\address{UMR729 MISTEA, INRA, F-34060 Montpellier, FRANCE Montpellier  \printead{e3}}
\end{aug}

\begin{abstract}
We review recent results for  high-dimensional sparse linear regression in the practical case of unknown variance. 
Different sparsity settings are covered, including coordinate-sparsity, group-sparsity and variation-sparsity.
The emphasis is put on non-asymptotic analyses and feasible procedures. In addition, a small numerical study compares the practical performance of three schemes for tuning the Lasso estimator and some references are collected for some more general models, including multivariate regression and nonparametric regression.
\end{abstract}

\begin{keyword}[class=AMS]
\kwd{62J05}
\kwd{62J07}
\kwd{62G08}
\kwd{62H12}
\end{keyword}
\begin{keyword}
\kwd{linear regression}
\kwd{high-dimension}
\kwd{unknown variance}
\end{keyword}

\end{frontmatter}

\section{Introduction}\label{section_introduction}
In the present paper, we mainly focus on
the linear regression model
\begin{eqnarray}\label{modele1}
 Y={\bf X}\beta_0 +\eps\,,
\end{eqnarray}
where $Y$ is a $n$-dimensional response vector, $\X$ is a fixed $n\times p$ design matrix, and 
the vector $\eps$ is made of $n$ i.i.d\;Gaussian random variables with $\mathcal{N}(0,\sigma^2)$ distribution. In the sequel, $\X^{(i)}$ stands for the $i$-th row of $\X$.
Our interest is on the high-dimensional setting, where the dimension $p$ of  the unknown parameter $\beta_{0}$ is large, possibly larger than $n$.

The analysis of the high-dimensional linear regression model has attracted a lot of attention in the last decade. Nevertheless, 
there is a longstanding gap between the theory where the variance $\sigma^2$ is generally assumed to be known and the practice where it is often unknown. The present paper is mainly devoted to review recent results on linear regression in  high-dimensional settings with \emph{unknown} variance $\sigma^2$.
A few additional results for multivariate regression and the nonparametric regression model
\begin{equation}\label{modele_generale} 
 Y_i= f(\X^{(i)})+ \eps_i, \quad\quad i=1,\ldots,n\,,
\end{equation}
 will also be mentioned.

\subsection{Sparsity assumptions}
In a high-dimensional linear regression model, accurate estimation  is unfeasible unless it relies on some special properties of the parameter $\beta_0$. The most common assumption on  $\beta_0$ is  that it is sparse in some sense. 
We will consider in this paper the three following classical sparsity assumptions.\smallskip

\noindent
\textsf{\textbf{Coordinate-sparsity.}} Most of the coordinates of
$\beta_0$ are assumed to be  zero (or approximately zero). This is the
most common acceptation for sparsity in linear
regression.

\smallskip

\noindent 
\textsf{\textbf{Structured-sparsity.}} The pattern of zero(s) of the coordinates  of $\beta_0$ is assumed to have an a priori known structure. For instance, in group-sparsity~\cite{groupyuan}, the covariates  are clustered into $M$ groups and when the coefficient $\beta_{0,i}$ corresponding to the covariate ${\bf X}_i$ (the $i$-th column of ${\bf X}$) is non-zero, then it is likely that all the coefficients $\beta_{0,j}$ with variables ${\bf X}_j$ in the same cluster as ${\bf X}_i$ are non-zero.\smallskip

\noindent 
\textsf{\textbf{Variation-sparsity.}} The $p-1$-dimensional vector $\beta^V_0$ of variation of $\beta_0$ is defined by $\beta^V_{0,j}=\beta_{0,j+1}-\beta_{0,j}$. Sparsity in variation means that most of the components of $\beta^V_0$ are equal to zero (or approximately zero). When $p=n$ and ${\bf X}=I_n$, variation-sparse linear regression  corresponds to signal segmentation.

\subsection{Statistical objectives}

In the linear regression model, there are roughly two kinds of estimation objectives. In the \emph{prediction problem}, the goal is to estimate ${\bf X}\beta_0$, whereas in the \emph{inverse problem} it is to  estimate $\beta_0$. When the vector $\beta_0$ is sparse, a related objective is to estimate the \emph{support} of $\beta_0$ (model identification problem) which is the set of the indices $j$ corresponding to the non zero coefficients $\beta_{0,j}$.
Inverse problems and prediction problems are not equivalent in general. When the Gram matrix ${\bf X}{\bf X}^*$ is poorly conditioned, the former problems can be much more difficult than the latter. Since there are only a few results on inverse problems with unknown  variance, we will focus on the prediction problem, the support estimation problem being shortly discussed in the course of the paper.  

In the sequel,  $\mathbb{E}_{\beta_0}[.]$ stands for the expectation with respect to $Y\sim\mathcal{N}({\bf X}\beta_0,\sigma^2 I_n)$ and $\|.\|_2$  is the euclidean norm. The prediction objective amounts  to build estimators $\widehat{\beta}$ so that the risk 
\begin{equation}\label{definition_risque_prediction}
 \mathcal{R}[\widehat{\beta};\beta_0]:= \mathbb{E}_{\beta_0}[\|{\bf X}
(\widehat{\beta}-\beta_0)\|_2^2]
\end{equation}
 is as small as possible. 

\subsection{Approaches}

Most procedures that handle high-dimensional linear models~\cite{candes07,DT08,tsyrig10,tiblasso,2005_TSR_JRSSB,zhangMC+,zhangFwdBwd,zou05} rely on tuning parameters whose 
optimal value depends on $\sigma$. For example, the results of Bickel et al.~\cite{bickeltsy08}
suggest to choose the tuning parameter $\lambda$ of the Lasso 
of the order of $2\sigma\sqrt{2\log(p)}$. As a consequence, all these procedures cannot be directly applied when $\sigma^2$ is unknown.

A straightforward approach is to replace $\sigma^2$ by an estimate of the variance in the optimal value of the tuning parameter(s). Nevertheless,  the variance $\sigma^2$ is difficult to estimate in  high-dimensional settings, so  a plug-in of the variance does not necessarily yield good results. There are basically two approaches to build on this amount of work on high-dimensional estimation with known variance.
\begin{enumerate}
\item \textsf{\textbf{Ad-hoc estimation.}} There has been some recent
  work~\cite{squarerootlasso,stadler,sun} to modify procedures like
  the Lasso in such a way that the tuning parameter does not depend
  anymore on $\sigma^2$ (see Section \ref{section_tune_lasso}). The
  challenge is to find a smart modification of the procedure, so that
  the resulting estimator $\widehat{\beta}$ is computationally
  feasible and has a risk
  $\mathcal{R}\left[\widehat{\beta};\beta_0\right]$ as small as
  possible. 

 \item \textsf{\textbf{Estimator selection.}} Given a collection $(\widehat{\beta}_{\lambda})_{\lambda\in\Lambda}$ of estimators, the objective of estimator selection is to pick an index $\widehat{\lambda}$ such that the risk of $\widehat{\beta}_{\widehat{\lambda}}$ is as small as possible; ideally as small as the 
 risk $\mathcal{R}[\widehat{\beta}_{\lambda^*};\beta_0]$ of the so-called \emph{oracle}  estimator
\begin{equation}\label{tuning_optimal}
\widehat\beta_{\lambda^*}:= \argmin_{\{\widehat\beta_{\lambda},\ \lambda\in \Lambda\}} \mathcal{R}\left[\widehat{\beta}_{\lambda};\beta_0\right]\ .
 \end{equation}
Efficient estimator selection procedures can then be applied to tune  the
aforementioned estimation
methods~\cite{candes07,DT08,tsyrig10,tiblasso,2005_TSR_JRSSB,zhangMC+,zhangFwdBwd,zou05}. Among the most famous methods for estimator selection, we mention $V$-fold cross-validation (Geisser~\cite{Geisser75}), \AIC\ (Akaike~\cite{Akaike73}) and \BIC\ (Schwarz~\cite{Schwartz78}) criteria.
\end{enumerate}
The objective of this survey is to describe state-of-the-art procedures for  high-dimensional linear regression with unknown variance. 
We will review both automatic tuning methods and  ad-hoc methods.
There are some procedures that we will let aside. For example, Baraud~\cite{Baraud10} provides a versatile estimator selection scheme, but the procedure is computationally intractable  in large dimensions.
Linear or convex aggregation of estimators are also valuable alternatives to estimator selection when the goal is to perform \emph{estimation}, but only a few theoretical works have addressed the aggregation problem when the variance is unknown~\cite{giraudagregation,sebastien_agregation}. For these reasons, we will not review these approaches in the sequel.

\subsection{Why care about non-asymptotic analyses ?}
\AIC~\cite{Akaike73}, \BIC~\cite{Schwartz78} and $V$-fold Cross-Validation~\cite{Geisser75} are probably the most popular criteria for estimator selection. The use of these criteria relies on some classical asymptotic optimality results. These results focus on the setting where the collection of estimators $(\hbeta_{\lambda})_{\lambda\in\Lambda}$ and the dimension $p$ are fixed and consider the limit behavior of the criteria when the sample size $n$ goes to infinity. For example, under some suitable conditions, Shibata~\cite{Shibata81}, Li~\cite{Li87} and Shao~\cite{Shao97} prove that the risk of the estimator selected by \AIC\ or $V$-fold \CV\ (with $V=V_{n}\to \infty$) is asymptotically equivalent to the oracle risk $\mathcal{R}[\widehat{\beta}_{\lambda^*};\beta_0]$. Similarly, 
Nishii~\cite{Nishii84} shows that the \BIC\ criterion is consistent for model selection.

All these asymptotic results can lead to misleading conclusions in  modern statistical settings where the sample size remains small and the parameter's dimension becomes large. For instance it is proved in \cite[Sect.3.3.2]{BGH09} and illustrated in \cite[Sect.6.2]{BGH09}
that \BIC\ (and thus \AIC) can strongly overfit and should not be used for $p$ larger than $n$. Additional examples are provided in Appendix \ref{section_appendix_BIC}.
 A non-asymptotic analysis takes into account  all the characteristics of the selection problem (sample size $n$, parameter dimension $p$, number of models per dimension, etc). It treats $n$ and $p$ as they are and it avoids to miss important features hidden in asymptotic limits. 
For these reasons, we will restrict in this review on non-asymptotic results.

\subsection{Organization of the paper}
In Section \ref{section_limites_theorique}, we investigate how the ignorance of the variance affects the minimax risk bounds. In Section~\ref{generic_schemes.st}, some "generic" estimators selection schemes are presented. The coordinate-sparse setting is addressed  Section~\ref{section_univariate} : some theoretical results are collected and a small numerical experiment compares different Lasso-based procedures.
The group-sparse and variation-sparse settings are reviewed in Section~\ref{subsection_groupe} and~\ref{section_rupture}, and   Section~\ref{section_extension} is devoted to some more general models such as multivariate regression or nonparametric regression.\smallskip

In the sequel, $C$, $C_1$,\ldots  refer to numerical constants whose value may vary from line to line, while $\|\beta\|_0$ stands for the number of non zero components of $\beta$ and $|\mathcal{J}|$ for the cardinality of a set $\mathcal{J}$.

\section{Theoretical limits}\label{section_limites_theorique}
The goal of this section is to address the intrinsic difficulty of a coordinate-sparse linear regression  problem. We will answer the following questions: 
Which range of $p$ can we reasonably consider? When the variance is unknown, can we hope to do as well as when the variance is known?

\subsection{Minimax adaptation}
A classical way to assess the performance of an estimator $\widehat{\beta}$ is
to  measure its maximal risk over a class $\boldsymbol{B}\subset \mathbb{R}^p$. This is
the minimax point of view. As we are interested in coordinate-sparsity for $\beta_0$, we will consider the sets $\boldsymbol{B}[k,p]$ of vectors 
that contain at most $k$ non zero coordinates for some $k>0$. 

Given an estimator $\widehat{\beta}$, the {\it maximal prediction
risk} of $\widehat{\beta}$ over $\boldsymbol{B}[k,p]$ for a fixed design ${\bf X}$ and a
variance $\sigma^2$ is defined by
$\sup_{\beta_0\in\boldsymbol{B}[k,p]}\mathcal{R}[\widehat{\beta};\beta_0]$ where the risk function $\mathcal{R}[.,\beta_0]$ is defined by (\ref{definition_risque_prediction}). Taking the infimum of the maximal risk over all
possible estimators $\widehat{\beta}$, we obtain the
{\it minimax risk}
\begin{equation}
\label{definition_minimax_prediction_fixe}
\mathbf{R}[k,{\bf
X}]=\inf_{\widehat{\beta}}\sup_{\beta_0\in\boldsymbol{B}[k,p]}\mathcal{R}[\widehat{\beta};\beta_0]\ .
\end{equation}
Minimax bounds are convenient results to assess the range of problems that are statistically feasible and the optimality of particular procedures. Below, we say 
that an estimator
$\widehat{\beta}$
is ``minimax'' over $\boldsymbol{B}[k,p]$ if its maximal prediction risk is close to the minimax risk. 

In practice, the number of non-zero coordinates of $\beta_0$ is unknown. The fact that an estimator $\widehat{\beta}$ is minimax 
over $\boldsymbol{B}[k,p]$ for some specific $k>0$ does not imply that $\widehat{\beta}$ estimates well vectors $\beta_0$ that are less sparse. A good estimation procedure $\widehat{\beta}$ should not require the knowledge of the sparsity $k$ of $\beta_0$ and should perform as well as if this sparsity were known. An estimator $\widehat{\beta}$ that nearly achieves the minimax risk over
$\boldsymbol{B}[k,p]$ for a range of $k$  is  said to be {\it adaptive} to
the sparsity. Similarly,  an estimator $\widehat{\beta}$ 
is adaptive to the variance $\sigma^2$, if it does not require the
knowledge of $\sigma^2$ and nearly achieves the minimax risk for all 
$\sigma^2>0$. When possible, the main challenge is to build adaptive procedures.
\smallskip

In the following subsections, we review sharp bounds on the minimax prediction risks for both  known and unknown sparsity, known and unknown variance. The big picture is summed up in Figure \ref{figure_minimax}. Roughly, it says that adaptation is possible as long as $2k\log(p/k)< n$. In contrast, the situation becomes more complex for  the ultra-high-dimensional\footnote{In some papers, the expression ultra-high-dimensional has been used to characterize problems such that $\log(p)=O(n^{\theta})$ with $\theta<1$. We argue here that as soon as $k\log(p)/n$ goes to $0$, the case  $\log(p)=O(n^{\theta})$ is not intrinsically more difficult than conditions such as $p=O(n^{\delta})$ with $\delta>0$.} setting where $2k\log(p/k)\geq  n$. The rest of this section is devoted to explain this big picture.
\begin{figure}[hptb]
\begin{center}
{\includegraphics[width=9cm,angle=0]{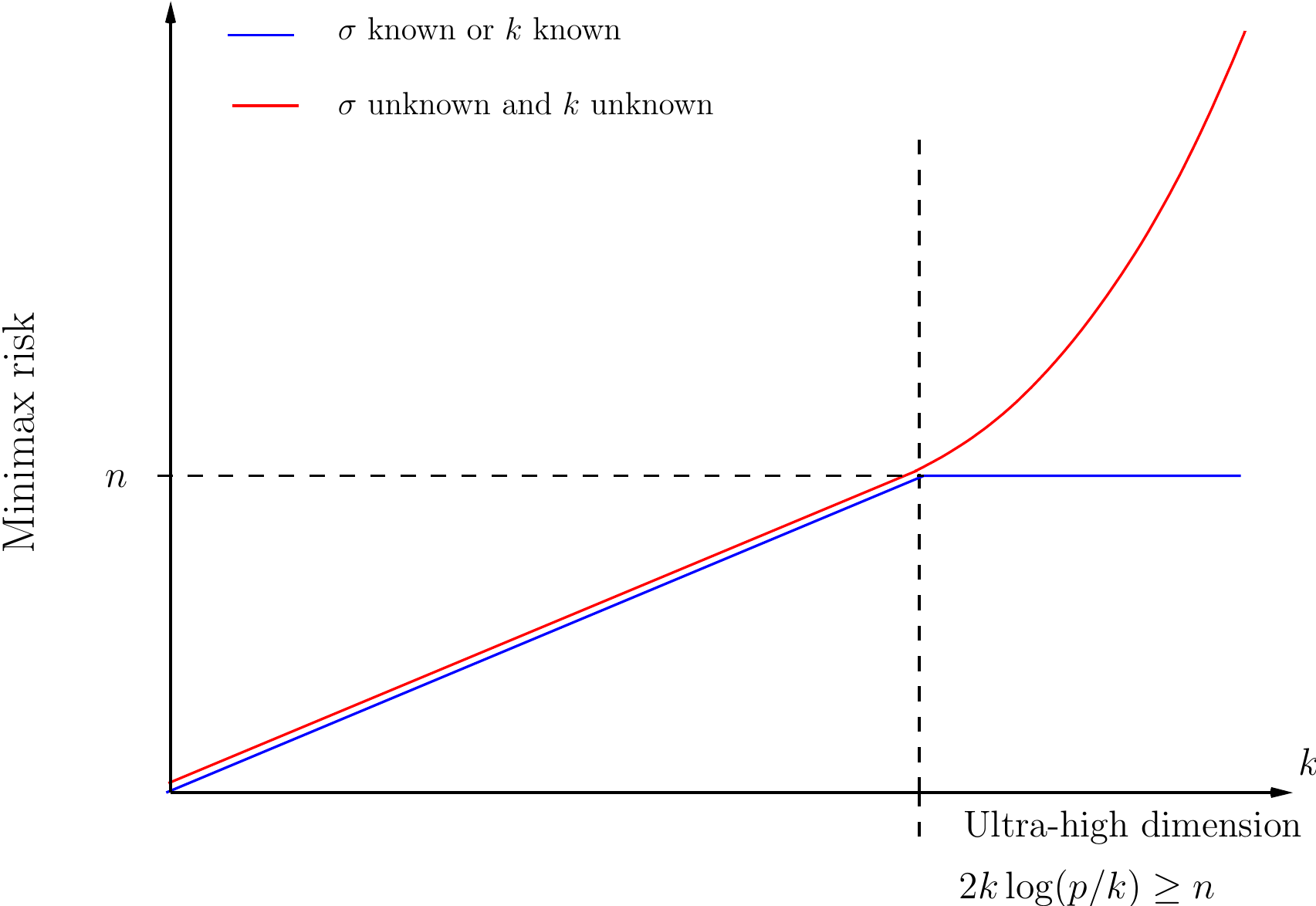}}
\caption{Minimal prediction risk over $\boldsymbol{B}[k,p]$ as a function of $k$. }\label{figure_minimax} 
\end{center}
\end{figure}

\subsection{Minimax risks under known sparsity and known variance}

The minimax risk $\mathbf{R}[k,{\bf X}]$ depends on the form of the design ${\bf X}$. In order to grasp this dependency, we define for any $k>0$,  the largest and the smallest sparse eigenvalues of order $k$ of ${\bf
X}^*{\bf X}$ by 
\begin{equation*}
\varPhi_{k,+}({\bf X}) :=
\sup_{\beta\in \boldsymbol{B}[k,p]\setminus\{0_p\}}\frac{\|{\bf
X}\beta\|_n^2}{\|\beta\|_p^2}\hspace{0.5cm}\text{ and }\hspace{0.5cm}
\varPhi_{k,-}({\bf X}) :=\inf_{\beta\in\boldsymbol{B}[k,p]\setminus\{0_p\}}\frac{\|{\bf
X}\beta\|_n^2}{\|\beta\|_p^2}\ .
\end{equation*}
.

\begin{prte}\label{prte_design} Assume that $k$ and $\sigma$ are known.
There exist positive numerical constants $C_1$, $C'_1$, $C_2$, and $C'_2$ such that the following holds.
For any $(k,n,p)$ such that $k\leq n/2$ and any design ${\bf X}$, we have
\begin{equation}\label{minoration_minimax_prediction_design}
C_1 \frac{\varPhi_{2k,-}({\bf X})}{\varPhi_{2k,+}({\bf X})}
k\log\left(\frac{p}{k}\right)\sigma^2\leq 
 \mathbf{R}[k,{\bf X}]\leq C'_1 \left[k\log\left(\frac{p}{k}\right)\wedge n\right]\sigma^2\ ,
\end{equation}
For any $(k,n,p)$ such that $k\leq n/2$, we have
\begin{equation}\label{minoration_minimax_prediction}
 C_2 
\left[k\log\left(\frac{p}{k}\right)\wedge n
\right]\sigma^2\leq \sup_{{\bf X}}\mathbf{R}[k,{\bf X}]\leq
C'_2 \left[k\log\left(\frac{p}{k}\right)\wedge n
\right]\sigma^2\ .
\end{equation}
\end{prte}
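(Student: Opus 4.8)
The plan is to first prove the fixed-design bounds \eqref{minoration_minimax_prediction_design} and then deduce the universal bounds \eqref{minoration_minimax_prediction} by exhibiting good designs. The only property of the sparse eigenvalues I shall use is that $\varPhi_{2k,-}(\mathbf{X})\|\beta\|_2^2\le\|\mathbf{X}\beta\|_2^2\le\varPhi_{2k,+}(\mathbf{X})\|\beta\|_2^2$ for every $2k$-sparse $\beta$. The two upper bounds coincide and hold for \emph{every} design, so I would obtain them at once with a penalized least-squares estimator over the collection of models $V_m=\range(\mathbf{X}_m)$, $m\subset\{1,\dots,p\}$, $|m|\le k$, enlarged by the single model $\range(\mathbf{X})$. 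Since there are $\binom{p}{d}\le(ep/d)^d$ models of size $d$, one can attach weights $L_m\lesssim\log(p/|m|)$ with $\sum_m e^{-L_m|m|}<\infty$, and the classical Birgé--Massart oracle inequality for Gaussian model selection with penalty of order $\sigma^2|m|(1+L_m)$ gives, writing $\Pi_{V_m}$ for the orthogonal projection onto $V_m$,
\[ \mathcal{R}[\widehat\beta;\beta_0]\le C\inf_m\Big(\|\mathbf{X}\beta_0-\Pi_{V_m}\mathbf{X}\beta_0\|_2^2+\sigma^2|m|(1+L_m)\Big). \]
Taking $m=\supp(\beta_0)$ kills the bias and produces $C\sigma^2 k\log(p/k)$, while $\range(\mathbf{X})$ is unbiased of dimension at most $n$ and produces $C\sigma^2 n$; the minimum is the announced $C_1'[k\log(p/k)\wedge n]\sigma^2$, uniformly in $\mathbf{X}$, which is simultaneously the right-hand side of \eqref{minoration_minimax_prediction_design} and of \eqref{minoration_minimax_prediction}.

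For the lower bound in \eqref{minoration_minimax_prediction_design} I would run Fano's method on a well-chosen family in $\boldsymbol{B}[k,p]$. By the Varshamov--Gilbert bound there exist supports $S_1,\dots,S_N$ of size $k$ with $|S_i\triangle S_j|\ge k/2$ and $\log N\ge c\,k\log(p/k)$. Setting $\beta^{(i)}=a\mathbf{1}_{S_i}$, each difference $\beta^{(i)}-\beta^{(j)}$ is $2k$-sparse with $\|\beta^{(i)}-\beta^{(j)}\|_2^2\in[a^2k/2,\,2a^2k]$, so that the separation satisfies $\|\mathbf{X}(\beta^{(i)}-\beta^{(j)})\|_2^2\ge\tfrac12\varPhi_{2k,-}a^2k$ while the pairwise Kullback--Leibler divergence $\tfrac1{2\sigma^2}\|\mathbf{X}(\beta^{(i)}-\beta^{(j)})\|_2^2\le\sigma^{-2}\varPhi_{2k,+}a^2k$ is controlled by the \emph{upper} eigenvalue. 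Choosing $a^2\asymp\sigma^2\log(p/k)/\varPhi_{2k,+}$ makes the divergence at most $\alpha\log N$ with $\alpha<1$, and Fano's inequality lower bounds the minimax risk by a constant multiple of the squared separation, namely $C_1\,\tfrac{\varPhi_{2k,-}}{\varPhi_{2k,+}}\,k\log(p/k)\,\sigma^2$. The eigenvalue ratio is therefore structural: the separation is governed by $\varPhi_{2k,-}$ whereas the information budget is governed by $\varPhi_{2k,+}$.

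It remains to prove the left-hand side of \eqref{minoration_minimax_prediction}, for which it suffices to exhibit one good design per regime. When $k\log(p/k)\le n$, a realization of a suitably normalized Gaussian matrix satisfies the restricted isometry property at order $2k$ (exactly because $2k\log(p/k)\lesssim n$), so $\varPhi_{2k,-}/\varPhi_{2k,+}\ge1/2$ and \eqref{minoration_minimax_prediction_design} yields $\mathbf{R}[k,\mathbf{X}]\ge C_2\,k\log(p/k)\sigma^2$. When $k\log(p/k)>n$ the target is $C_2\,n\sigma^2$, and here the restricted isometry property is unavailable, which I expect to be the main obstacle. I would bypass it by reducing the sparsity: take the largest $k'\le k$ with $2k'\log(p/k')\lesssim n$, so that $k'\log(p/k')\asymp n$; a random design is then RIP at order $2k'$, and since $\boldsymbol{B}[k',p]\subset\boldsymbol{B}[k,p]$ one has $\mathbf{R}[k,\mathbf{X}]\ge\mathbf{R}[k',\mathbf{X}]\ge C\,k'\log(p/k')\sigma^2\gtrsim n\sigma^2$. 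The residual case $\log p\gtrsim n$, where even $k'=1$ is already ultra-high, I would treat directly: pick $N=e^{cn}\le p$ pairwise nearly orthogonal unit columns of $\mathbf{X}$ and the one-sparse hypotheses $a\mathbf{X}_j$, whose mutual separations and divergences are both of order $a^2$, and optimize $a^2\asymp n\sigma^2$ in Fano to again obtain $\gtrsim n\sigma^2$. Combining the two regimes gives the left-hand side of \eqref{minoration_minimax_prediction} and completes the proof.
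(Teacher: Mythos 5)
Your proof is correct and takes essentially the same route as the paper: your upper bound is exactly the Birg\'e--Massart penalized model-selection estimator over sparse models augmented with the full model (the estimator $\widetilde{\beta}^{BM}$ of Appendix~\ref{section_appendix_minimax_1}), and your lower bounds---Fano's inequality on a Varshamov--Gilbert packing with the separation controlled by $\varPhi_{2k,-}$ and the Kullback--Leibler divergence by $\varPhi_{2k,+}$, then Gaussian designs (with a sparsity reduction in the ultra-high-dimensional regime) for~(\ref{minoration_minimax_prediction})---are precisely the arguments of the works the paper cites for~(\ref{minoration_minimax_prediction_design}) and~(\ref{minoration_minimax_prediction}). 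The only caveat is constant-chasing at the regime boundary (RIP at order $2k$ needs $n\geq C\,k\log(p/k)$ with a large numerical constant), which your reduction to $k'$ already covers.
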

The minimax lower bound (\ref{minoration_minimax_prediction_design}) has been first proved in \cite{raskwain09,tsyrig10,yeminimax} while  (\ref{minoration_minimax_prediction}) is stated in \cite{Vminimax}. Let us first comment the bound (\ref{minoration_minimax_prediction}). If the vector $\beta_0$ has $k$-non zero components and if these components are {\it a priori} known, then one may build estimators that achieve a risk bound of the order $k$. In a (non-ultra) high-dimensional setting ($2k\log(p/k)\leq n$), the minimax risk is of the order $k\log(p/k)\sigma^2$. The logarithmic term is the price to pay to cope with the fact that we do not know the position of the non zero components in $\beta_0$. The situation  is quite different in an ultra-high-dimensional setting ($2k\log(p/k)> n$). Indeed, the minimax risk remains of the order of $n\sigma^2$, which corresponds to the minimax risk of estimation of the vector ${\bf X}\beta_0$ without any sparsity assumption (see the blue curve in Figure \ref{figure_minimax}). In other terms, the sparsity index $k$ does not play a role anymore. 

\medskip

\noindent\textsc{\bf Dependency of $\mathbf{R}[k,{\bf X}]$ on the design ${\bf X }$}. It follows from (\ref{minoration_minimax_prediction_design}) that $\sup_{{\bf X}}\mathbf{R}[k,{\bf X}]$ is nearly achieved by designs ${\bf X}$ satisfying $\varPhi_{2k,-}({\bf X})/\varPhi_{2k,+}({\bf X})\approx 1$, when the setting is not ultra-high dimensional. 
For some designs such that $ \varPhi_{2k,-}({\bf X})/\varPhi_{2k,+}({\bf X})$ is small, the minimax prediction risk $\mathbf{R}[k,{\bf X}]$ is possibly faster (see \cite{Vminimax} for a discussion). In a ultra-high dimensional, the form of the minimax risk ($n\sigma^2$) is related to the fact that no designs can satisfy $ \varPhi_{2k,-}({\bf X})/\varPhi_{2k,+}({\bf X})\approx 1$ (see e.g.~\cite{baraniuk08}). The lower bound $\mathbf{R}[k,{\bf X}]\geq C \left[k\log(p/k)\wedge n\right]
\sigma^2$ in (\ref{minoration_minimax_prediction}) is for instance achieved by realizations of a standard Gaussian design, that is designs ${\bf X}$ whose components follow independent standard normal distributions.
See~\cite{Vminimax} for more details.\\

\subsection{Adaptation to the sparsity and to the variance}

\noindent\textsc{\bf Adaptation to the sparsity when the variance is known}. When $\sigma^2$ is known, there exist both model selection and aggregation procedures that achieve this 
$[k\log(p/k)\wedge n]\sigma^2$ risk simultaneously for all $k$ and for
all designs ${\bf X}$. Such procedures derive from
the work of Birg\'e and Massart~\cite{BM01} and Leung and
Barron~\cite{LB06}. However, these methods are
intractable for large $p$ except for specific forms of the design. We refer to Appendix
\ref{section_appendix_minimax_1} for more details. 
\medskip

\noindent\textsc{\bf Simultaneous adaptation to the sparsity and the variance}. We first restrict  to the non-ultra high-dimensional setting,  where the number of non-zero components $k$ is unknown but satisfies $2k\log(p/k)< n$. In this setting, some  procedures based on penalized log-likelihood~\cite{BGH09} are simultaneous adaptive to the unknown sparsity and to the unknown variance and this for all designs ${\bf X}$. Again such procedures are intractable for large $p$. See Appendix \ref{section_appendix_minimax_2} for more details. If we want to cover all $k$ (including ultra-high dimensional settings), the situation is different as shown in the next proposition (from~\cite{Vminimax}).

 \begin{prte}[Simultaneous adaptation is impossible]\label{prte_adaptation_impossible}
There exist positive constants $C$, $C'$, $C_1$, $C_2$, $C_3$, $C'_1$, $C'_2$, and $C'_3$, such that the following holds.
Consider any $p\geq n\geq C$ and $k\leq p^{1/3}\wedge n/2$ such that
$k\log(p/k)\geq C' n$. There exist designs
${\bf X}$ of size $n\times p$ such that
for any estimator $\widehat{\beta}$, we have either
\begin{eqnarray*}
\sup_{\
\sigma^2>0}\frac{\mathcal{R}[\widehat{\beta};0_p]}{\sigma^2} &> &C_1n\ , \hspace{2cm}\text{or}\\
\sup_{\beta_0\in\boldsymbol{B}[k,p]\
,\ \sigma^2>0}\frac{\mathcal{R}[\widehat{\beta};\beta_0]}{\sigma^2}
& > &C_2k\log\left(\frac{p}{k}\right)\exp\left[C_3\frac{k}{n}\log\left(\frac{p}{k}\right)\right]\ .
\end{eqnarray*}
 Conversely, there exist two estimators $\widehat{\beta}^{(n)}$ and $\widehat{\beta}^{BGH}$ (defined in Appendix \ref{section_appendix_minimax_2}) that respectively satisfy~\\
\begin{eqnarray*}
\sup_{\bf X}\sup_{\beta_0\in\mathbb{R}^p,\ 
\sigma^2>0}\frac{\mathcal{R}[\widehat{\beta}^{(n)};\beta_0]}{\sigma^2} &\leq& C'_1n\ ,\\ 
\sup_{\bf X}\sup_{\beta_0\in\boldsymbol{B}[k,p]\
,\ \sigma^2>0}\frac{\mathcal{R}[\widehat{\beta}^{BGH};\beta_0]}{\sigma^2}
 &\leq & C'_2k\log\left(\frac{p}{k}\right)\exp\left[C'_3\frac{k}{n}\log\left(\frac{p}{k}\right)\right]\ , 
\end{eqnarray*}
for all $1\leq k\leq [(n-1)\wedge p]/4$.

\end{prte}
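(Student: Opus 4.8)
The statement splits into a converse---the existence of the two estimators---and the impossibility itself, and only the latter carries real content. For the converse I take $\widehat{\beta}^{(n)}$ to be ordinary least squares, so that $\X\widehat{\beta}^{(n)}=\Pi Y$ with $\Pi$ the orthogonal projector onto the column space of $\X$, of rank at most $n$. Since $\X\beta_0\in\range(\Pi)$ for every $\beta_0$, one has $\X(\widehat{\beta}^{(n)}-\beta_0)=\Pi\eps$ and therefore $\mathcal{R}[\widehat{\beta}^{(n)};\beta_0]=\sigma^2\,\mathrm{rank}(\Pi)\le n\sigma^2$ uniformly in $\beta_0$ and $\sigma^2$, and crucially without any knowledge of $\sigma^2$; this yields the first bound with $C_1'=1$. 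For $\widehat{\beta}^{BGH}$ I would simply invoke the penalized log-likelihood estimator of Appendix~\ref{section_appendix_minimax_2}, whose risk over $\boldsymbol{B}[k,p]$ is already of the announced form $k\log(p/k)\exp[C_3'(k/n)\log(p/k)]\sigma^2$ and is variance-adaptive by construction.

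The impossibility I would establish by contradiction, assuming an estimator $\widehat{\beta}$ that escapes both alternatives: $\mathcal{R}[\widehat{\beta};0_p]\le C_1 n\sigma^2$ for all $\sigma^2$, with $C_1$ small---so that $\widehat{\beta}$ must do strictly better at the null than the trivial projector, i.e.\ it must genuinely shrink when the noise looks large---and $\mathcal{R}[\widehat{\beta};\beta_0]\le C_2 k\log(p/k)\exp[C_3(k/n)\log(p/k)]\sigma^2$ uniformly over $\boldsymbol{B}[k,p]$ and $\sigma^2$. I then exhibit two experiments that $\widehat{\beta}$ cannot tell apart but on which these bounds collide. Fix a realization of a standard Gaussian design $\X$ (which, by the discussion following Proposition~\ref{prte_design}, realizes the worst sparse-eigenvalue ratio in the ultra-high regime), a small variance $\sigma_1^2$ and an inflated variance $\sigma_0^2\gg\sigma_1^2$, and compare the pure-noise law $P_0=\mathcal{N}(0,\sigma_0^2 I_n)$ with the mixture $Q=\int\mathcal{N}(\X\beta,\sigma_1^2 I_n)\,d\pi(\beta)$, where $\pi$ draws a support $S$ uniformly among the $\binom{p}{k}$ choices and an amplitude $\tau$ on $S$ calibrated just below the sparse-detection level. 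The guiding idea is that a sparse mean averaged over all admissible supports mimics extra isotropic noise, making $Q$ and the single inflated-variance Gaussian $P_0$ statistically indistinguishable; moment matching fixes $\sigma_0^2=\sigma_1^2+\tau^2 k$ via $\E_S[\X_S\X_S^*]=\tfrac{k}{p}\X\X^*\approx k I_n$.

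Granting indistinguishability, the contradiction runs as follows. The null bound at variance $\sigma_0^2$ gives $\E_{P_0}\norm{\X\widehat{\beta}}_2^2\le C_1 n\sigma_0^2$, and by closeness of the two laws $\E_Q\norm{\X\widehat{\beta}}_2^2$ is of the same (small) order; thus, under $Q$, the shrunk estimator carries little energy. But the sparse mean itself carries energy $\E_\pi\norm{\X\beta}_2^2=n(\sigma_0^2-\sigma_1^2)\approx n\sigma_0^2$, so $\widehat{\beta}$ cannot track it and, after a triangle inequality, $\E_Q\norm{\X(\widehat{\beta}-\beta)}_2^2\gtrsim(1-\sqrt{C_1})^2\,n\sigma_0^2\gtrsim n\sigma_0^2$ for $C_1$ small enough. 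Dividing by $\sigma_1^2$ this lower bound reads $\gtrsim n\,(\sigma_0^2/\sigma_1^2)$, whereas the sparse bound caps it at $C_2 k\log(p/k)\exp[C_3(k/n)\log(p/k)]$. The exponential factor is exactly the largest variance inflation compatible with indistinguishability: the combinatorial cost $\log\binom{p}{k}\approx k\log(p/k)$ of locating the support, once spread over the $n$ effective directions, permits $\sigma_0^2/\sigma_1^2\approx\exp[C(k/n)\log(p/k)]$ with $C>C_3$. Writing $E:=(k/n)\log(p/k)$, so that $k\log(p/k)=nE$ and $E\ge C'$ in this regime, the two bounds conflict precisely when $\exp[(C-C_3)E]/E$ exceeds $C_2^{-1}$, which holds for all $E\ge C'$ once $C_2$ is small, since that ratio is bounded below by a positive constant.

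The crux---and the step I expect to fight with---is turning ``$Q\approx P_0$'' into a quantitative statement at this large variance ratio. A raw $\chi^2(Q,P_0)$ is treacherous, because each component of $Q$ has variance $\sigma_1^2\ll\sigma_0^2$ and the likelihood ratio $dQ/dP_0$ is heavy-tailed; I would instead control a truncated second moment (restricting to $\norm{y}_2^2\lesssim n\sigma_0^2$, which carries almost all the mass under both laws) or, equivalently, reduce the whole argument to a sparse-detection lower bound with unknown variance. Expanding that second moment over two independent supports $S,S'$ reduces to Gaussian determinant and trace identities for the rank-$k$ perturbations $\tau^2\X_S\X_S^*$, and the dominant contribution comes from nearly disjoint pairs; controlling the overlap terms $\ab{S\cap S'}$ uniformly is exactly where the hypothesis $k\le p^{1/3}$ is spent. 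Everything else---the moment matching, the projector risk bound, and the passage from indistinguishability to the energy dichotomy---is routine by comparison.
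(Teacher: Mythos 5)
Your converse half coincides with the paper's own treatment: there, $\widehat{\beta}^{(n)}$ is the least-squares estimator whose risk bound $Cn\sigma^2$ is the first (and, as the paper says, straightforward) bound of Proposition \ref{prte_risque_bgh2} --- your projector computation \emph{is} that proof --- and $\widehat{\beta}^{BGH}$ is the penalized estimator of Appendix \ref{section_appendix_minimax_2}, whose bound the paper derives from \cite{BGH09}. Bear in mind, however, that for the impossibility half the paper offers no proof at all: it is a survey and quotes the result from \cite{Vminimax}. Your inflated-variance-versus-sparse-mixture construction is precisely the strategy of that cited work, so the skeleton is the right one; the question is what your sketch actually establishes.

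The genuine gap is the one you flag yourself: the quantitative indistinguishability of $Q=\int\mathcal{N}(\X\beta,\sigma_1^2I_n)\,d\pi(\beta)$ from $P_0=\mathcal{N}(0,\sigma_0^2I_n)$ when $\sigma_0^2/\sigma_1^2\approx\exp\cro{C(k/n)\log(p/k)}$. In the regime $k\log(p/k)\geq C'n$ of the proposition this ratio is exponentially large in $n$, so this is not a perturbative statement, and it is exactly the content of the theorem: the entropy heuristic you invoke (the cost $\log\binom{p}{k}$ spread over $n$ directions ``permits'' the ratio) is a restatement of what must be proved, not a bound, and every downstream step --- transfer of the null bound to $Q$, the energy dichotomy, the final arithmetic in $E$ --- is conditional on it. Two subsidiary repairs would also be needed. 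First, for a \emph{fixed} realization of a Gaussian design the mixture $Q$ is not exactly spherical; the clean route, and the one taken in \cite{Vminimax}, keeps the design random so that the marginal of $Y$ under the sparse mixture is \emph{exactly} $\mathcal{N}(0,(\sigma_1^2+k\tau^2)I_n)$, proves the lower bound there, and only afterwards de-randomizes to produce the fixed designs asserted in the statement; your plan of fixing $\X$ first and then controlling overlap and chi-square terms (where $k\leq p^{1/3}$ indeed enters) takes on strictly harder computations. Second, total-variation closeness does not transfer second moments of the unbounded statistic $\|\X\widehat{\beta}\|_2^2$; one must pass through tail probabilities (Markov under $P_0$, then the TV bound, then lower-bound the $Q$-risk restricted to a high-probability event), which your truncation remark gestures at but does not execute. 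The two subsidiary issues are fixable by reorganizing the argument; the indistinguishability estimate at an exponentially large variance ratio is the missing theorem.
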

 As a consequence, simultaneous adaptation to the sparsity and to the variance is impossible in an ultra-high dimensional setting. Indeed, any estimator $\widehat{\beta}$ that does not rely on $\sigma^2$ has to pay at least one of
these two prices:
\begin{enumerate}
\item The estimator $\widehat{\beta}$ does not use the sparsity of the true
parameter $\beta_0$ and its risk for estimating ${\bf X}0_p$ is of the same order as the
minimax risk over $\mathbb{R}^n$. 
\item For any $1\leq k\leq p^{1/3}$, the risk of $\hbeta$ fulfills
\begin{eqnarray*}
\sup_{\sigma>0}\sup_{\beta_0\in\boldsymbol{B}[k,p]}\frac{\mathcal{R}[\widehat{\beta};\beta_0]}{\sigma^2}
\geq
C_1k\log\left(p\right)\exp\left[C_2\frac{k}{n}
\log\left(p\right)\right ]\ .
\end{eqnarray*}
It follows that the maximal risk of $\widehat{\beta}$ is blowing up in an ultra-high-dimensional setting (red curve in Figure \ref{figure_minimax}), while the minimax risk is stuck to $n$ (blue curve in  Figure \ref{figure_minimax}). The designs that satisfy the minimax lower bounds of Proposition \ref{prte_adaptation_impossible} include realizations of a standard Gaussian design.
\end{enumerate}


In an ultra-high dimensional setting, the prediction problem becomes extremely difficult under unknown variance because the variance estimation itself is inconsistent as shown in the next proposition (from~\cite{Vminimax}). 
\begin{prte} There exist positive constants $C$, $C_1$, and $C_2$ such that the following holds.
Assume that $p\geq n\geq C$.
For any $1\leq k\leq p^{1/3}$, there exist designs ${\bf X}$ such that 
\begin{equation*}
\inf_{\widehat{\sigma}} \sup_{\sigma>0,\ \beta_0\in\mathbf{B}[k,p]}\mathbb{E}_{\beta_0}\left[\left|\frac{\sigma^2}{\widehat{\sigma}^2}-\frac{\widehat{\sigma}^2}{\sigma^2}\right|\right]\geq C_1\frac{k}{n}\log\left(\frac{p}{k}\right)\exp\left[C_2\frac{k}{n}
\log\left(\frac{p}{k}\right)\right ]\ .
\end{equation*}

\end{prte}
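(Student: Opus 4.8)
The plan is to obtain the bound from a Bayesian two–point argument, the key structural feature being that the loss $\ell(\widehat\sigma^2,\sigma^2):=\left|\sigma^2/\widehat\sigma^2-\widehat\sigma^2/\sigma^2\right|$ depends on the data only through $\widehat\sigma^2$ and on the parameter only through $\sigma^2$, not through $\beta_0$. I first fix a target ratio $\rho:=\sigma_0^2/\sigma_1^2=\exp[C'(k/n)\log(p/k)]$ and confront two scenarios designed to be statistically almost indistinguishable. Under $H_0$ the data are $Y\sim\mathcal N(0,\sigma_0^2I_n)$, which corresponds to $\beta_0=0_p\in\boldsymbol{B}[k,p]$ and variance $\sigma_0^2$. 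Under $H_1$ the variance is the much smaller $\sigma_1^2$, while $\beta_0$ is drawn from a prior $\pi$ supported on $\boldsymbol{B}[k,p]$ whose signal exactly fills the variance gap, $\E_\pi\|\X\beta_0\|_2^2\approx n(\sigma_0^2-\sigma_1^2)$; in particular $\|Y\|_2^2\approx n\sigma_0^2$ under both scenarios, so the two cannot be told apart by the total energy. Writing $P_0=\mathcal N(0,\sigma_0^2I_n)$ and $P_1=\int\mathcal N(\X\beta,\sigma_1^2I_n)\,d\pi(\beta)$, everything reduces to proving $\|P_0-P_1\|_{TV}\le 1/2$.

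\textbf{From indistinguishability to the bound.} Reparametrising $u=\sigma_0\sigma_1w$ shows that for every $u>0$,
\[
\ell(u,\sigma_0^2)+\ell(u,\sigma_1^2)\ \ge\ \max\{\ell(u,\sigma_0^2),\ell(u,\sigma_1^2)\}\ \ge\ \tfrac12\sqrt\rho\qquad(\rho\ge 4),
\]
the minimum being attained near the geometric mean $u\approx\sigma_0\sigma_1$, where both terms are of order $\sqrt\rho$. Since $dP_i\ge dP_0\wedge dP_1$ and $\ell\ge0$, any estimator satisfies
\[
\E_{P_0}\ell(\widehat\sigma^2,\sigma_0^2)+\E_{P_1}\ell(\widehat\sigma^2,\sigma_1^2)\ \ge\ \tfrac12\sqrt\rho\,\big(1-\|P_0-P_1\|_{TV}\big),
\]
so that, using $\|P_0-P_1\|_{TV}\le 1/2$ and bounding the maximum by half the sum, $\inf_{\widehat\sigma}\sup\,\E\,\ell\ge\sqrt\rho/8$. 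Because the Bayes risk under $\pi$ lower bounds the supremum over $\beta_0\in\boldsymbol{B}[k,p]$ and $\sigma>0$, this is exactly the left-hand side of the claim. Finally $\sqrt\rho=\exp[\tfrac{C'}2(k/n)\log(p/k)]$ dominates the polynomial factor $(k/n)\log(p/k)$; choosing $C_2=C'/4$ and $C_1$ small enough turns $\sqrt\rho/8$ into $C_1(k/n)\log(p/k)\exp[C_2(k/n)\log(p/k)]$, the announced prefactor being absorbed for free inside the exponential.

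\textbf{The main obstacle: bounding the total variation.} It remains to exhibit a design $\X$ and a prior $\pi$ for which $\|P_0-P_1\|_{TV}\le 1/2$, and this is where the ultra-high-dimensional condition $k\log(p/k)\ge C'n$ enters. I would take $\X$ to be a realization of a standard Gaussian design — the same family that saturates Proposition \ref{prte_adaptation_impossible} — and $\pi$ uniform over supports of size $k$ with independent symmetric entries of magnitude $\tau\asymp\sigma_0/\sqrt k$, truncated to the event that the $2k$-sparse eigenvalues of $\X$ are under control and that pairs of supports overlap little (both of overwhelming probability once $k\le p^{1/3}$, since then $\log(p/k)\asymp\log p$ and the Gaussian design enjoys a restricted isometry at sparsity $2k$). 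The natural route is the second moment method: the Gaussian integral identity gives a closed form
\[
\chi^2(P_1\|P_0)+1=\Big(\tfrac{\sigma_0^2}{\sqrt{\sigma_1^2(2\sigma_0^2-\sigma_1^2)}}\Big)^{n}\,\E_{\beta,\beta'}\exp\!\Big(\tfrac{2\sigma_0^2\langle\X\beta,\X\beta'\rangle-(\sigma_0^2-\sigma_1^2)(\|\X\beta\|_2^2+\|\X\beta'\|_2^2)}{2\sigma_1^2(2\sigma_0^2-\sigma_1^2)}\Big),
\]
where $\beta,\beta'$ are independent draws from $\pi$. The delicate point is that the prefactor and the self-energy terms are each exponentially large in $n$, and it is only for a \emph{typical} pair — nearly disjoint supports, $\langle\X\beta,\X\beta'\rangle\approx 0$ and $\|\X\beta\|_2^2\approx n\sigma_0^2$ — that they cancel to leave an $O(1)$ contribution. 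Controlling the rare pairs with large support overlap or large inner product, which would otherwise make the expectation blow up, is the crux: it is precisely the combinatorial richness $\binom{p}{k}\approx\exp[k\log(p/k)]\ge\exp[C'n]$ of the support set that dilutes the signal over enough directions to keep the truncated second moment bounded, whence $\|P_0-P_1\|_{TV}\le1/2$. I expect this second-moment control under the ultra-high-dimensional threshold to be the main difficulty; the remaining derandomisation — passing from the random Gaussian design to a fixed design attaining the bound — is then routine.
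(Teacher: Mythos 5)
Your overall strategy is sound, and it is in fact the route taken in the reference from which the survey imports this proposition (the survey itself gives no proof; the statement is quoted from \cite{Vminimax}): a Le Cam two-point argument opposing $P_0=\mathcal{N}(0,\sigma_0^2I_n)$ to a sparse mixture $P_1$ with smaller variance whose signal fills the energy gap, the loss geometry of $\ell(u,\sigma^2)=|\sigma^2/u-u/\sigma^2|$, and a second-moment control of the likelihood ratio for a realization of a Gaussian design. Your individual steps that are carried out are correct: the inequality $\max\{\ell(u,\sigma_0^2),\ell(u,\sigma_1^2)\}\geq \sqrt{\rho}-1/\sqrt{\rho}$ holds for every $u>0$, the Le Cam step is standard, and your $\chi^2$ identity is exactly right (both the prefactor $\big(\sigma_0^2/\sqrt{\sigma_1^2(2\sigma_0^2-\sigma_1^2)}\big)^{n}$ and the exponent check out). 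The problem is that everything the proposition actually asserts --- the exponential factor $\exp[C_2(k/n)\log(p/k)]$, the role of the restriction $k\le p^{1/3}$, the existence of a suitable design --- lives inside the one step you do not carry out, namely $\|P_0-P_1\|_{TV}\le 1/2$ with $\rho=\exp[C'(k/n)\log(p/k)]$. Reducing the claim to its hardest ingredient and then stopping ("I expect this second-moment control \ldots to be the main difficulty") leaves an outline, not a proof: the truncated second-moment computation, with the hypergeometric control of $|\mathrm{supp}(\beta)\cap\mathrm{supp}(\beta')|$ and the balance between the prefactor $\exp\big[\tfrac n2\log\tfrac{\rho^2}{2\rho-1}\big]$ and the probability $\approx\binom{p}{k}^{-1}$ of heavily overlapping pairs (this is where $k\le p^{1/3}$ enters), \emph{is} the proposition.

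Beyond incompleteness, the specific way you propose to tame the rare pairs would fail. You truncate "to the event that the $2k$-sparse eigenvalues of $\X$ are under control", asserting that for $k\le p^{1/3}$ "the Gaussian design enjoys a restricted isometry at sparsity $2k$". This is false precisely in the regime where the statement has content: restricted isometry at sparsity $2k$ forces $n\gtrsim k\log(p/k)$, and the survey itself recalls (Section 2, citing \cite{baraniuk08}) that no $n\times p$ design can satisfy $\varPhi_{2k,-}(\X)/\varPhi_{2k,+}(\X)\approx 1$ once $2k\log(p/k)\geq n$; when $2k>n$ the smallest sparse eigenvalue is even identically zero. The correct conditioning is not a design-level RIP event but per-pair events --- concentration of $\|\X\beta\|_2^2$ and of $\langle \X\beta,\X\beta'\rangle$ for individual draws from the prior, together with the overlap tail bound {\`a} la Baraud's signal-detection arguments --- which hold with high probability for all $(n,p,k)$. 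Finally, a smaller coverage gap: your loss bound is invoked only for $\rho\ge 4$, i.e.\ $(k/n)\log(p/k)\ge \log(4)/C'$, whereas the proposition covers every $1\le k\le p^{1/3}$, including small $k$ for which $\rho$ is close to $1$. There you need the sharper bound $\max\ge(\rho-1)/\sqrt{\rho}=2\sinh\big(\tfrac{C'}{2}\cdot\tfrac{k}{n}\log\tfrac{p}{k}\big)\gtrsim \tfrac{k}{n}\log\tfrac{p}{k}$; as written, your argument says nothing in that range.
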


\smallskip

\subsection{What should we expect from a good estimation procedure?}\label{section_limites_theorique_object}

Let us consider an estimator $\widehat{\beta}$ that does not depend on $\sigma^2$. Relying on the previous minimax bounds, we will say that $\widehat{\beta}$ achieves an {\it optimal} risk bound  (with respect to the sparsity) if 
\begin{equation}\label{borne_optimales_risque}
 \mathcal{R}[\widehat{\beta};\beta_0]\leq C_1 \|\beta_0\|_0\log(p)\sigma^2\ ,
\end{equation}
for any $\sigma>0$ and any vector $\beta_0\in\mathbb{R}^p$ such that $1\leq \|\beta_0\|_0\log(p)\leq C_2 n$. Such risk bounds prove that the estimator is approximately (up to a possible $\log(\|\beta_{0}\|_{0})$ additional term) minimax adaptive to the unknown variance and the unknown sparsity. The condition  $\|\beta_0\|_0\log(p)\leq C_2 n$ ensures that the setting is not ultra-high-dimensional.  As stated above, some procedures achieve (\ref{borne_optimales_risque}) for all designs ${\bf X}$ but they are intractable for large $p$ (see Appendix \ref{section_appendix_minimax}). One purpose of this review is to present fast procedures that achieve this  kind of bounds  under  possible restrictive assumptions on the design matrix ${\bf X}$.

For some procedures, (\ref{borne_optimales_risque}) can be improved into a bound of the form
\begin{equation}\label{bel-oracle}
 \mathcal{R}[\widehat{\beta};\beta_0]\leq C_1\inf_{\beta\neq 0}\left\{\|{\bf X}(\beta-\beta_{0})\|_2^2+ \|\beta\|_0 \log(p)\sigma^2\right\}\ ,
\end{equation}
with $C_1$ close to one. Again, the dimension $\|\beta_0\|_0$ is restricted to be smaller than $Cn/\log(p)$ to ensure that the setting is not ultra-high dimensional. This kind of bound makes a clear trade-off between a bias and a variance term. 
For instance, when $\beta_0$ contains many components that are nearly equal to zero, the  bound~\eref{bel-oracle} can be much smaller than (\ref{borne_optimales_risque}).

\subsection{Other statistical problems in an ultra-high-dimensional setting}
We have seen that adaptation becomes impossible for the prediction problem in a ultra-high dimensional setting. For other statistical problems, including the prediction problem with  random design, the inverse problem (estimation of $\beta_0$), the variable selection problem (estimation of the support of $\beta_0$), the dimension reduction problem~\cite{Vminimax,wain_minimax2,jinups}, the minimax risks are blowing up in a ultra-high dimensional setting. This kind of phase transition has been observed in a wide range of random geometry problems~\cite{donoho_transition}, suggesting some universality in this limitation.
In practice, the sparsity index $k$ is not known, but given $(n,p)$ we can compute $k^*:=\max\{k:\ 2k\log(p/k)\geq n\}$. One may interpret that the problem is still reasonably difficult as long as $k\leq k^*$. This gives a simple rule of thumb to know what we can hope from a given regression problem. For example, setting $p=5000$ and $n=50$ leads to $k^*=3$, implying that the prediction problem becomes extremely difficult when there are more than 4 relevant covariates (see the simulations in~\cite{Vminimax}).

\section{Some generic selection schemes}\label{generic_schemes.st}
Among the selection schemes not requiring the knowledge of  the variance $\sigma^2$, some are very specific to a particular algorithm, while some others are more generic. We describe in this section three versatile selection principles and refer to the examples for the more specific schemes. 

\subsection{Cross-Validation procedures}
The cross-validation schemes are nearly universal in the sense that they can be implemented in most statistical frameworks and for most  estimation procedures. 
The principle of the cross-validation schemes is to split the data into a \emph{training} set and a \emph{validation} set : the estimators are built on the \emph{training} set and the \emph{validation} set is used for estimating their prediction risk. This training / validation splitting  is eventually repeated several times.
The most popular cross-validation schemes are :
\begin{itemize}
\item \emph{Hold-out}~\cite{MostellerTukey68, DevroyeWagner79} which is based on a single split of the data for \emph{training} and \emph{validation}.
\item \emph{$V$-fold} \CV\ \cite{Geisser75}. The data is split into $V$ subsamples. Each subsample is successively removed for \emph{validation}, the remaining data being used for \emph{training}.
\item \emph{Leave-one-out}~\cite{Stone74} which corresponds to $n$-fold \CV.
\item \emph{Leave-$q$-out} (also called \emph{delete-$q$-CV})~\cite{Shao93} where every possible
 subset of cardinality $q$ of the data  is removed for \emph{validation}, the remaining data being used for \emph{training}. 
\end{itemize}
We refer to Arlot and C\'elisse~\cite{ArlotCelisse} for a  review of the cross-validation schemes and their theoretical properties.

\subsection{Penalized empirical loss}
Penalized empirical loss criteria form another class of versatile selection schemes, yet less universal than \CV\ procedures. 
The principle is to select among a family $(\hbeta_{\lambda})_{\lambda\in\Lambda}$ of estimators by minimizing a criterion of the generic form
\begin{equation}\label{penalized-loss}
\crit(\lambda)=\L_{\bf X}(Y,\hbeta_{\lambda})+\pen(\lambda),
\end{equation}
where $\L_{\bf X}(Y,\hbeta_{\lambda})$ is a measure of the distance between $Y$ and ${\bf X}\hbeta_{\lambda}$, and
 $\pen$ is a function from $\Lambda$ to $\R^+$. The penalty function sometimes  depends on data. \smallskip
 
\noindent\textsf{\textbf{Penalized log-likelihood.}} The most famous criteria of the form~\eref{penalized-loss} are \AIC\ and \BIC. They have been designed to select among estimators $\hbeta_{\lambda}$ obtained by maximizing the likelihood of $(\beta,\sigma)$ with the constraint that $\beta$ lies on a linear space $S_{\lambda}$ (called \emph{model}). In the Gaussian case, these estimators are given by $\X\hbeta_{\lambda}=\Pi_{S_{\lambda}}Y$, where $\Pi_{S_{\lambda}}$ denotes the orthogonal projector onto the model $S_{\lambda}$.
For \AIC\ and \BIC, the function $\L_{{\bf X}}$ corresponds to twice the negative log-likelihood  $\L_{{\bf X}}(Y,\hbeta_{\lambda})=n\log(\|Y-\X\hbeta_{\lambda}\|_2^2)$ and the penalties are $\pen(\lambda)=2\dim(S_{\lambda})$  and $\pen(\lambda)=\dim(S_{\lambda})\log(n)$ respectively. We recall that these two criteria can perform very poorly in a high-dimensional setting.
 
In the same setting, Baraud {\it et al.}~\cite{BGH09} propose alternative penalties built from a non-asymptotic perspective. 
The resulting criterion can handle the high-dimensional setting where $p$ is possibly larger than $n$ and the risk of the selection procedure is controlled by a bound of the form~(\ref{bel-oracle}), see Theorem~2 in~\cite{BGH09}. \smallskip

\noindent\textsf{\textbf{Plug-in criteria.}} 
Many other penalized-empirical-loss criteria have been developed in the last decades. Several selection criteria~\cite{BaronBirgeMassart99, BM01} have been designed from a non-asymptotic point of view  to handle the case where the variance is known. 
These criteria usually involve the residual least-square $\L_{{\bf X}}(Y,\hbeta_{\lambda})=\|Y-\X\hbeta_{\lambda}\|_2^2$ and a
 penalty $\pen(\lambda)$  depending on the variance $\sigma^2$. A common practice is then to plug in the penalty an estimate $\hat \sigma^2$ of the variance in place of the variance. For linear regression, when the  design matrix $\X$ has a rank less than $n$, a classical choice for $\hat \sigma^2$ is
$$\hat\sigma^2={\|Y-\Pi_{\X} Y\|_2^2\over n-\mathrm{rank}(\X)}\,,$$
with $\Pi_{\X}$ the orthogonal projector onto the range of $\X$. 
This estimator $\hat\sigma^2$ has the nice feature to be independent of $\Pi_{\X} Y$ on which usually rely the estimators $\hbeta_{\lambda}$. Nevertheless, the variance of $\hat\sigma^2$ is of order $\sigma^4/\pa{n-\mathrm{rank}(\X)}$ which is small only when the sample size $n$ is quite large in front of the rank of $\X$. This situation is unfortunately not likely to happen in a high-dimensional setting where $p$ can be larger than $n$.

\subsection{Approximation versus complexity penalization : \LinSelect}
The criterion proposed by
Baraud {\it et al.}~\cite{BGH09} can handle high-dimensional settings but it suffers from two rigidities. First, it can only handle \emph{fixed} collections of models $(S_{\lambda})_{\lambda\in\Lambda}$. In some situations, the size of $\Lambda$ is huge (e.g. for complete variable selection) and  the estimation procedure can then be  computationally intractable. In this case, we may want to work with a subcollection 
of models $(S_{\lambda})_{\lambda\in\widehat\Lambda}$, where $\widehat\Lambda\subset\Lambda$ may depend on data. For example, for  complete variable selection, the subset $\widehat\Lambda$ could be generated by efficient algorithms like LARS~\cite{lars}.
The second rigidity of the procedure of Baraud {\it et al.}~\cite{BGH09} is that it can only handle constrained-maximum-likelihood estimators. This procedure then does not help for selecting among arbitrary estimators such as the Lasso or Elastic-Net.

These two rigidities have been addressed recently by Baraud {\it et al.}~\cite{linselect}. They propose a selection procedure, \LinSelect, which can handle both data-dependent collections of models and arbitrary estimators $\hbeta_{\lambda}$. 
The procedure is based on a collection $\S$ of linear spaces which gives  a collection of possible "approximative" supports for the estimators $(\X\hbeta_{\lambda})_{\lambda\in\Lambda}$. A measure of  complexity on $\S$
is provided by a weight function $\Delta: \S \to \R^+$ . We refer to Sections~\ref{tuning-lasso} and~\ref{subsection_groupe} for examples of collection $\S$ and weight $\Delta$ in the context of coordinate-sparse and group-sparse regression.
We present below a simplified version of the \LinSelect\ procedure.
For a suitable, possibly data-dependent, subset $\hS\subset \S$ (depending on the statistical problem), the estimator $\hbeta_{\widehat \lambda}$ is selected by minimizing the criterion
\begin{equation}\label{eq:linselect}
\crit(\hbeta_{\lambda})=\inf_{S\in\hS}\cro{
\|Y-\Pi_{S}\X\hbeta_{\lambda}\|_2^{2}+{1\over 2}\|\X\hbeta_{\lambda}-\Pi_{S}\X\hbeta_{\lambda}\|_2^{2}+\pen(S)\,\widehat\sigma^2_{S}},
\end{equation}
where $\Pi_{S}$ is the orthogonal projector onto  $S$,  
\begin{equation*}\label{eq:var}
\widehat \sigma^2_{S}={\norm{Y-\Pi_{S}Y}^2_{2}\over n-\dim(S)},
\end{equation*}
and $\pen(S)$ is a penalty depending on $\Delta$.
In the cases we will consider here, the penalty $\pen(S)$ is roughly of the order of $\Delta(S)$ and therefore it penalizes $S$ according to its complexity.
We refer to the Appendix~\ref{appendix-linselect}  for a precise definition of this penalty and more details on its characteristics. 
 We emphasize  that the Criterion~(\ref{eq:linselect}) and the family of estimators $\{\hbeta_{\lambda},\ \lambda\in\Lambda\}$ are based on 
 the \emph{same} data $Y$ and $\X$. In other words, there is no data-splitting occurring in the \LinSelect\ procedure. The first term in~(\ref{eq:linselect}) quantifies the fit of the projected estimator to the data, the second term evaluates the approximation quality of the space $S$ and the last term penalizes $S$ according to its complexity.
 We refer to Proposition~\ref{ORACLE-LINSELECT} in Appendix~\ref{appendix-linselect}
 and Theorem~1 in~\cite{BGH09}  for  risk bounds on the selected estimator.
 Instantiations of the procedure and more specific risks bounds are given 
in Sections~\ref{section_univariate} and~\ref{subsection_groupe} in the context of coordinate-sparsity and group-sparsity. 

From a computational point of view, 
the algorithmic complexity of \LinSelect\ is at most proportional to $|\Lambda|\times|\hS|$ and in many cases there is 
no need to scan the whole set $\hS$ for each $\lambda\in\Lambda$ to minimize~(\ref{eq:linselect}).
In the examples of  Sections~\ref{section_univariate} and~\ref{subsection_groupe}, the whole procedure is computationally less intensive than $V$-fold \CV, see Table~\ref{CompTime.tb}.
Finally, we mention that for the constrained least-square estimators  $\X\hbeta_{\lambda}=\Pi_{S_{\lambda}}Y$, the \LinSelect\ procedure with $\hS=\ac{S_{\lambda}:\lambda\in\Lambda}$ simply coincides with the procedure of Baraud {\it et al.}~\cite{BGH09}.

\section{Coordinate-sparsity}\label{section_univariate}
In this section, we focus on the high-dimensional linear regression model $Y={\bf X}\beta_0+\varepsilon$ where the vector $\beta_0$ itself is assumed to be sparse. 
This setting has attracted a lot of attention in the last decade, and many  estimation procedures have been developed. Most of them require the choice of tuning parameters which depend on the unknown variance~$\sigma^2$. This is for instance the case for the Lasso~\cite{tiblasso,chen98}, Dantzig Selector~\cite{candes07}, Elastic Net~\cite{zou05}, MC+~\cite{zhangMC+}, aggregation techniques~\cite{tsybakov_agregation_07,DT08}, etc. 

We first discuss how the generic schemes  introduced in the previous section can be instantiated for tuning these procedures and for selecting among them. Then, we pay a special attention to  the calibration of the Lasso. Finally, we discuss the problem of support estimation and present a small numerical study.

\subsection{Automatic tuning methods}\label{tuning-lasso}

\noindent\textbf{\textsf{Cross-validation.}} Arguably, $V$-fold Cross-Validation is the most popular technique for tuning the above-mentioned procedures. 
To our knowledge, 
there are no other theoretical results for $V$-fold \CV\ in large dimensional settings.

 In practice, $V$-fold \CV\ seems to give rather good results.
The problem of choosing the best  $V$ has not yet been solved~\cite[Section 10]{ArlotCelisse}, but
it is often reported that a good choice for $V$ is between 5 and 10. Indeed,  the statistical performance does not increase for larger values of $V$, and averaging over 10 splits remains computationally feasible~\cite[Section 7.10]{hastie2009}.
\smallskip

\noindent\textbf{\textsf{LinSelect.}} The procedure \LinSelect\ can be used for selecting among a collection $(\hbeta_{\lambda})_{\lambda\in\Lambda}$ of sparse regressors as follows. For $\J\subset \ac{1,\ldots,p}$, we define $\X_{\J}$ as the matrix $[\X_{ij}]_{i=1,\ldots,n,\ j\in \J}$ obtained by only keeping the columns of $\X$ with index in $\J$. We recall that  the collection $\S$  gives some possible "approximative" supports for the estimators $(\X\hbeta_{\lambda})_{\lambda\in\Lambda}$. For  sparse linear regression, a possible collection $\S$ and measure of complexity $\Delta$ are 
\begin{eqnarray*}
\S&=&\Big\{S=\range(\X_{\J}),\ \J\subset \ac{1,\ldots,p},\ 1\leq|\J|\leq n/(3\log p)\Big\}\\
\textrm{and}\quad\Delta(S)&=&\log\binom{p}{\dim(S)}+\log(\dim(S)).
\end{eqnarray*}
Let us introduce the spaces $\widehat S_{\lambda}=\range\pa{\X_{\supp(\hbeta_{\lambda})}}$ and the subcollection of $\S$
$$\widehat\S=\ac{\widehat S_{\lambda},\ \lambda\in\widehat\Lambda},\quad\textrm{where}\ \widehat\Lambda=\ac{\lambda\in\Lambda\ :\ \widehat S_{\lambda}\in\S}.$$
The following proposition gives a risk bound when selecting $\widehat\lambda$ with \LinSelect\ with   the above choice of $\widehat \S$ and $\Delta$.
\begin{prte}\label{oracle-sparse}
There exists a numerical constant $C>1$ such that  for any minimizer $\widehat{\lambda}$ of the Criterion~\eref{eq:linselect}, we have
\begin{eqnarray}\label{linselect-sparse}
\lefteqn{\mathcal{R}\left[\widehat{\beta}_{\widehat{\lambda}};\beta_0\right]}\nonumber\\
&\leq& 
 C\ \E\cro{\inf_{\lambda\in\Lambda} \ac{\|{\bf X}\hbeta_{\lambda}-{\bf X}\beta_{0}\|_2^2+
 \inf_{S\in\widehat \S}\ac{\|\X\hbeta_{\lambda}-\Pi_{S}\X\hbeta_{\lambda}\|^2_{2}+\dim(S)\log(p)\sigma^2}}} \nonumber\\
&\leq& C\ \E\cro{\inf_{\lambda\in\widehat\Lambda} \ac{\|{\bf X}\hbeta_{\lambda}-{\bf X}\beta_{0}\|_2^2+\|\hbeta_{\lambda}\|_{0}\log(p)\sigma^2}}.
\end{eqnarray}
\end{prte}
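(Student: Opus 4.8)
The plan is to deduce both inequalities from the general oracle inequality for \LinSelect\ (Proposition~\ref{ORACLE-LINSELECT}), applied to the specific collection $\S$, weight $\Delta$ and subcollection $\widehat\S$ introduced above. That proposition, applied to the criterion~\eref{eq:linselect}, yields a bound of the form
\begin{equation*}
\mathcal{R}\left[\hbeta_{\widehat\lambda};\beta_0\right]\leq C\,\E\cro{\inf_{\lambda\in\Lambda}\ac{\|\X\hbeta_\lambda-\X\beta_0\|_2^2+\inf_{S\in\widehat\S}\ac{\|\X\hbeta_\lambda-\Pi_S\X\hbeta_\lambda\|_2^2+\pen(S)\,\sigma^2}}},
\end{equation*}
for a numerical constant $C>1$; here the plug-in variances $\widehat\sigma^2_S$ of~\eref{eq:linselect} have been replaced by $\sigma^2$, which is exactly what the appendix result delivers (this is legitimate because every $S\in\S$ has dimension at most $n/(3\log p)\leq n/3$, so $n-\dim(S)$ stays of order $n$ and the variance estimate has enough degrees of freedom). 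First I would take this bound as the starting point.

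For the first inequality it then suffices to bound the penalty. Since $\pen(S)$ is, up to a multiplicative constant, of the order of $\Delta(S)=\log\binom{p}{\dim(S)}+\log(\dim(S))$, the elementary estimate $\binom{p}{d}\leq(ep/d)^d$ gives $\Delta(S)\leq d\log(ep/d)+\log d\leq C\,d\log(p)$ with $d=\dim(S)$. Substituting $\pen(S)\sigma^2\leq C\dim(S)\log(p)\sigma^2$ into the display above and absorbing constants into $C$ yields the first line of~\eref{linselect-sparse}.

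For the second inequality I would restrict the outer infimum to the data-dependent subset $\widehat\Lambda\subset\Lambda$; since an infimum over a smaller set is larger, this only enlarges the right-hand side and hence preserves the inequality in the desired direction. For each $\lambda\in\widehat\Lambda$ the space $\widehat S_\lambda=\range(\X_{\supp(\hbeta_\lambda)})$ lies in $\S$ by definition of $\widehat\Lambda$, hence in $\widehat\S$, so it is an admissible choice in the inner infimum $\inf_{S\in\widehat\S}$. Two things happen for this choice: the approximation term vanishes, because $\X\hbeta_\lambda\in\range(\X_{\supp(\hbeta_\lambda)})=\widehat S_\lambda$ forces $\Pi_{\widehat S_\lambda}\X\hbeta_\lambda=\X\hbeta_\lambda$; and the dimension obeys $\dim(\widehat S_\lambda)=\dim(\range(\X_{\supp(\hbeta_\lambda)}))\leq|\supp(\hbeta_\lambda)|=\|\hbeta_\lambda\|_0$. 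Bounding the inner infimum by its value at $S=\widehat S_\lambda$ therefore gives $\inf_{S\in\widehat\S}\{\cdots\}\leq\|\hbeta_\lambda\|_0\log(p)\sigma^2$, which produces the second line of~\eref{linselect-sparse}.

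The genuinely hard part is Proposition~\ref{ORACLE-LINSELECT} itself, which I am using as a black box: it requires uniform control, over the whole data-dependent collection $\widehat\S\subset\S$, of the Gaussian and chi-square fluctuations of the residuals $\|Y-\Pi_S\X\hbeta_\lambda\|_2^2$ and of the plug-in variances $\widehat\sigma^2_S$, with deviations summable against the weights $\Delta(S)$. Granting that, the steps above are routine bookkeeping; the only points that deserve care are the identity $\Pi_{\widehat S_\lambda}\X\hbeta_\lambda=\X\hbeta_\lambda$ (which is what makes the approximation term disappear) and the membership $\widehat S_\lambda\in\widehat\S$ for $\lambda\in\widehat\Lambda$ (which is guaranteed precisely by the constraint $\|\hbeta_\lambda\|_0\lesssim n/(3\log p)$ that keeps $\widehat S_\lambda$ inside $\S$).
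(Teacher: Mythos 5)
Your proposal is correct and takes essentially the same route as the paper, which disposes of Proposition~\ref{oracle-sparse} in a single line as ``a simple corollary of Proposition~\ref{ORACLE-LINSELECT}'': you apply that result with the stated pair $(\S,\Delta)$, bound $\Delta(S)\vee\dim(S)\leq C\dim(S)\log(p)$ via $\binom{p}{d}\leq (ep/d)^d$, and obtain the second line by restricting the outer infimum to $\widehat\Lambda$ and taking $S=\widehat S_\lambda$, using $\Pi_{\widehat S_\lambda}\X\hbeta_\lambda=\X\hbeta_\lambda$ and $\dim(\widehat S_\lambda)\leq\|\hbeta_\lambda\|_0$. The only detail left implicit (as it is in the paper) is the absorption into the constant $C$ of the additive remainder $\Sigma=\sigma^2\sum_{S\in\S}e^{-\Delta(S)}$ appearing in the first bound of Proposition~\ref{ORACLE-LINSELECT}.
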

Proposition~\ref{oracle-sparse} is a simple corollary of Proposition~\ref{ORACLE-LINSELECT} in Appendix~\ref{appendix-linselect}.   The first bound involves three terms: the loss of the estimator $\hbeta_{\lambda}$, an approximation loss, and a variance term. Hence, $\LinSelect$ chooses an estimator $\hbeta_{\lambda}$ that achieves a trade-off between the loss of $\hbeta_{\lambda}$ and the closeness of ${\bf X}\hbeta_{\lambda}$ to some small dimensional subspace $S$. 
The bound~\eref{linselect-sparse} cannot be formulated in the form~\eref{bel-oracle} due to the random nature of the set $\widehat\Lambda$. Nevertheless, a bound similar to~\eref{borne_optimales_risque} can be deduced from~\eref{linselect-sparse}  when the estimators $\hbeta_{\lambda}$ are least-squares estimators,
see Corollary~4 in~\cite{linselect}. Furthermore, we note that increasing the size of $\Lambda$ leads
 to a better risk bound for $\hbeta_{\widehat{\lambda}}$. It is then advisable to consider a family of candidate estimators $\{\hbeta_{\lambda},\ \lambda\in\Lambda\}$ as large as possible. The Proposition \ref{oracle-sparse} is valid for any family of estimators $\{\hbeta_{\lambda},\ \lambda\in\Lambda\}$, for the  specific family of Lasso estimators $\{\widehat{\beta}^{L}_{\lambda},\ \lambda>0\}$  we provide a refined bound in  Proposition~\ref{prte_risque_proba_LinSelect_Lasso}, Section~\ref{result-lasso}.

\subsection{Lasso-type estimation under unknown variance}\label{section_tune_lasso}
The Lasso is certainly one of the most popular methods for variable
selection in a high-dimensional setting. Given $\lambda>0$, the Lasso estimator 
$\widehat{\beta}^{L}_{\lambda}$ is defined by  $\widehat{\beta}^{L}_{\lambda}:= \argmin_{\beta\in\mathbb{R}^p}\|Y-{\bf X}\beta\|_2^2+\lambda \|\beta\|_1$.
A sensible choice of $\lambda$ must be homogeneous with the square-root of the  variance $\sigma^2$. As explained above, when the variance $\sigma^2$ is unknown,  one may apply $V$-fold \CV\ or \LinSelect~to select $\lambda$. Some alternative approaches have also  been developed for tuning the Lasso. Their common idea is to modify the $\ell_1$ criterion  so that the tuning parameter becomes pivotal with respect to $\sigma^2$. This means that the method remains valid for any $\sigma>0$ and that the choice of the tuning parameter does not depend on $\sigma$. For the sake of simplicity, we assume throughout this subsection and the next one that the columns of ${\bf X}$ are normalized to one.
\smallskip

\noindent\textbf{\textsf{$\ell_1$-penalized log-likelihood.}} In low-dimensional  regression, it is classical to consider a penalized log-likelihood criterion instead of 
a penalized least-square  criterion to handle the unknown variance. Following this principle,  St\"adler et al.~\cite{stadler} propose to minimize the $\ell_1$-penalized log-likelihood criterion
\begin{eqnarray}\label{critere_vraisemblance
}
 \widehat{\beta}^{LL}_{\lambda}, \widehat{\sigma}^{LL}_{\lambda}:=\argmin_{\beta\in\mathbb{R}^p,\sigma'>0}\left[n\log(\sigma')+\frac{\|Y-{\bf X}\beta\|_2^2}{2\sigma'^2}+ \lambda\frac{\|\beta\|_1}{\sigma'}\right].
\end{eqnarray}
By reparametrizing $(\beta,\sigma)$, St\"adler et al.~\cite{stadler} obtain a convex criterion that can be efficiently minimized. Interestingly, the penalty level $\lambda$ is pivotal with respect to $\sigma$. Under suitable conditions on the design matrix ${\bf X}$,  Sun and Zhang~\cite{sundiscussion} show that  the choice $\lambda=c\sqrt{2\log p}$, with $c>1$ yields optimal risk bounds in the sense of (\ref{borne_optimales_risque}).
\smallskip

\noindent\textsf{\textbf{Square-root Lasso and scaled Lasso}}.
Sun and Zhang~\cite{sun},
following an idea of Antoniadis~\cite{anto2011}, propose to minimize a penalized Huber's  loss~\cite[page 179]{Huber}
\begin{eqnarray}\label{critere_vraisemblance_huber}
 \widehat{\beta}^{SR}_{\lambda},
 \widehat{\sigma}^{SR}_{\lambda}:=\argmin_{\beta\in\mathbb{R}^p,\sigma'>0}\left[
   \frac{n\sigma'}{2}+\frac{\|Y-{\bf X}\beta\|_2^2}{2\sigma'}+
   \lambda\|\beta\|_1\right].
\end{eqnarray}
This convex criterion can be minimized with roughly the same
computational complexity as a Lars-Lasso
path~\cite{lars}. Interestingly, their procedure (called the scaled Lasso in~\cite{sun}) is equivalent to  the square-root Lasso estimator previously introduced by Belloni et al.~\cite{squarerootlasso}. 
The square-root Lasso of Belloni et al.  is obtained  by replacing 
the residual sum of squares in the Lasso criterion by its square-root
\begin{equation}\label{critere_square_root_lasso}
 \widehat{\beta}^{SR}_{\lambda}= \argmin_{\beta\in\mathbb{R}^p}\left[\sqrt{\|Y-{\bf X}\beta\|_2^2}+\frac{\lambda}{\sqrt{n}} \|\beta\|_1\right]\,.
\end{equation}
The equivalence between the two definitions follows from the minimization of the criterion in (\ref{critere_vraisemblance_huber}) with respect to $\sigma'$.
In (\ref{critere_vraisemblance_huber}) and (\ref{critere_square_root_lasso}), the penalty level $\lambda$ is again pivotal with respect to $\sigma$.
Sun and Zhang~\cite{sun} state sharp
oracle inequalities for the estimator $\widehat{\beta}^{SR}_{\lambda}$ with
$\lambda=c\sqrt{2\log(p)}$, with $c>1$ (see
Proposition~\ref{prte_risque_lassos} below). Their empirical results
suggest that the criterion (\ref{critere_square_root_lasso})
provides slightly better results than the $\ell^1$-penalized
log-likelihood. In the sequel, we shall refer to $\widehat{\beta}^{SR}_{\lambda}$ as the square-root Lasso estimator.
\smallskip

%

\noindent\textsf{\textbf{Bayesian Lasso.}}
 The Bayesian paradigm allows to put prior distributions on the variance $\sigma^2$ and the tuning parameter $\lambda$, as in the Bayesian Lasso~\cite{bayesianlasso}. Bayesian procedures straightforwardly handle the case of unknown variance, but no frequentist analysis of these procedures are so far available.

\subsection{Risk bounds for square-root Lasso and Lasso-\LinSelect}\label{result-lasso}
Let us state a  bound on the prediction error for the square-root Lasso (also called scaled Lasso). For the sake of conciseness, we only present a simplified version of Theorem~1 in~\cite{sun}.
Consider some number $\xi>0$ and some subset $T\subset\{1,\ldots,p\}$. The compatibility constant $\kappa[\xi,T]$ is defined by
\begin{equation*}
 \kappa[\xi,T]= \min_{u\in \mathcal{C}(\xi,T)} \left\{\frac{|T|^{1/2}\|{\bf X}u\|_2}{\|u_T\|_1}\right\},\ \ \textrm{where}\ 
  \mathcal{C}(\xi,T)=\{u:\ \|u_{T^c}\|_1< \xi \|u_T\|_1\}. 
\end{equation*}
\begin{prte}\label{prte_risque_lassos}
There exist positive numerical constants $C_1$, $C_2$, and $C_3$ such that the following holds.
Let us consider the square-root Lasso with
the tuning parameter  $\lambda=2\sqrt{2\log(p)}$. If we assume that
\begin{enumerate}
 \item $p\geq C_1$ 
\item $ \|\beta_0\|_0\leq C_2 \,\kappa^2[4,\mathrm{supp}(\beta_0)]\,\frac{n}{\log(p)}$,
\end{enumerate}
then, with high probability, 
\begin{eqnarray*}
 \|{\bf X}(\widehat{\beta}^{SR}-\beta_{0})\|_2^2&\leq& \inf_{\beta\neq 0}\left\{\|{\bf X}(\beta_0-\beta)\|_2^2+ C_3\frac{\|\beta\|_0\log(p)}{\kappa^2[4,\mathrm{supp}(\beta)]}\sigma^2\right\}\ .  
\end{eqnarray*}
\end{prte}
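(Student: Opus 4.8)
The plan is to follow the standard oracle-inequality strategy for $\ell_1$-penalized estimators, adapting each step to the \emph{scale-invariant} structure that makes the square-root Lasso pivotal. First I would write the Karush--Kuhn--Tucker conditions for the convex criterion~\eref{critere_square_root_lasso}. Setting $\widehat r=Y-\X\widehat{\beta}^{SR}$, the gradient of $\beta\mapsto\sqrt{\norm{Y-\X\beta}_2^2}=\norm{Y-\X\beta}_2$ is $-\X^*\widehat r/\norm{\widehat r}_2$, so optimality reads
$$\frac{\X^*\widehat r}{\norm{\widehat r}_2}=\frac{\lambda}{\sqrt n}\,\widehat z,\qquad \widehat z\in\partial\norm{\widehat{\beta}^{SR}}_1,\quad \norm{\widehat z}_\infty\leq 1,$$
with $\widehat z_j=\mathrm{sign}(\widehat\beta^{SR}_j)$ on the support of $\widehat{\beta}^{SR}$. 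The decisive point is that the relevant \emph{score} is the \emph{normalized} vector $\X^*\widehat r/\norm{\widehat r}_2$; this is exactly what decouples the threshold from $\sigma$.

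Second, I would isolate the single probabilistic ingredient. On the event that the normalized noise score $\norm{\X^*\eps/\norm{\eps}_2}_\infty$ falls below $\lambda/\sqrt n$ by a fixed factor, everything downstream is deterministic. Because the columns of $\X$ have unit norm, each $\X_j^*\eps/\sigma$ is standard Gaussian while $\norm{\eps}_2/\sigma$ concentrates around $\sqrt n$; a union bound over the $p$ coordinates combined with a $\chi^2$ concentration inequality gives $\norm{\X^*\eps/\norm{\eps}_2}_\infty\lesssim\sqrt{2\log(p)/n}$ with probability tending to one as soon as $p\geq C_1$, and $\lambda=2\sqrt{2\log p}$ is chosen precisely so that $\lambda/\sqrt n$ exceeds this by a factor two. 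Crucially $\sigma$ cancels in the ratio, so this event is \emph{pivotal} --- this is the whole purpose of the construction.

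Third, I would derive the cone condition and convert it into a prediction bound. Fixing a competitor $\beta\neq 0$ and writing $h=\widehat{\beta}^{SR}-\beta$, $T=\supp(\beta)$, I would combine the basic inequality coming from the optimality of $\widehat{\beta}^{SR}$ with the noise bound to show that $h$ lies in the cone $\mathcal C(4,T)$, i.e. $\norm{h_{T^c}}_1<4\norm{h_T}_1$. At this stage the compatibility constant $\kappa[4,T]$ becomes available and turns the $\ell_1$ bound on $\norm{h_T}_1$ into a lower bound on $\norm{\X h}_2$, producing an estimate of the form $\norm{\X h}_2^2\lesssim \norm{\beta}_0\log(p)\,\sigma^2/\kappa^2[4,T]$. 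A careful use of the optimality --- rather than a crude triangle inequality --- is what yields the \emph{sharp} leading constant one on the approximation term $\norm{\X(\beta_0-\beta)}_2^2$; taking the infimum over $\beta\neq 0$ then delivers the stated inequality.

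The hard part will be the feedback between the residual norm $\norm{\widehat r}_2$ and the error $h$. The effective penalty in the KKT relation is $(\lambda/\sqrt n)\norm{\widehat r}_2$, yet $\norm{\widehat r}_2$ itself depends on the quality of $\widehat{\beta}^{SR}$, which is what we are trying to control. I would close this loop by showing that $\norm{\widehat r}_2$ stays within constant factors of $\norm{\eps}_2\asymp\sigma\sqrt n$: the upper bound is immediate from the basic inequality, while the lower bound, preventing the residual from collapsing, is more delicate. This is precisely where the sparsity hypothesis $\norm{\beta_0}_0\leq C_2\,\kappa^2[4,\supp(\beta_0)]\,n/\log(p)$ enters: it ensures that the extra variance paid along the selected directions is a small fraction of $\norm{\eps}_2^2$, keeping the normalized score near its pivotal value and confining the argument to the non-ultra-high-dimensional regime where the whole scale-invariant scheme remains stable.
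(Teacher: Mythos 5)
The paper offers no proof of this proposition itself: it is stated as a simplified version of Theorem~1 of Sun and Zhang~\cite{sun}, and your sketch reconstructs precisely the strategy of that source --- the pivotal KKT/score condition $\X^*\widehat r/\|\widehat r\|_2$, the high-probability event on $\|\X^*\eps\|_\infty/\|\eps\|_2$ (where $\sigma$ cancels), the cone and compatibility step, and the two-sided control of $\|\widehat r\|_2/\|\eps\|_2$ under the sparsity hypothesis. Your plan is sound and correctly identifies the residual-norm feedback loop as the crux of the argument, so it follows essentially the same route as the proof the paper relies on.
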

This bound is  comparable to the general  objective (\ref{bel-oracle})  stated in Section~\ref{section_limites_theorique_object}. Interestingly, the constant before the bias term  $\|{\bf X}(\beta_0-\beta)\|_2^2$ equals one. If $\|\beta_0\|_0=k$, the square-root Lasso achieves the minimax loss $k\log(p)\sigma^2$ as long as $k\log(p)/n$ is small  and $\kappa[4,\mathrm{supp}(\beta_0)]$ is away from zero. This last condition ensures that the design ${\bf X}$ is not too far from orthogonality on the cone $\mathcal{C}(4,\mathrm{supp}(\beta_0))$.
State of the art results for the classical Lasso with known variance~\cite{bickeltsy08, Kolt11,geer_condition} all involve this condition. \smallskip

We next state a risk bound for the Lasso-\LinSelect\ procedure.
For $\J\subset \ac{1,\ldots,p}$, we define 
$\phi_{\J}$ as the largest eigenvalue of $X_{\J}^TX_{\J}$. The following proposition involves the restricted eigenvalue
$\phi_{*}=\max\ac{\phi_{\J}:\textrm{Card}(\J)\leq n/(3\log p)}.$
\begin{prte}\label{prte_risque_proba_LinSelect_Lasso}
There exist positive numerical constants $C$, $C_1$, $C_2$, and $C_3$ such that the following holds.
Take $\Lambda=\mathbb{R}^+$ and assume that
$$\|\beta_{0}\|_{0}\leq C\,{\kappa^2[5,\supp(\beta_{0})]\over\phi_{*}}\times {n\over \log(p)}.$$
Then, with probability at least $1-C_1p^{-C_2}$, the Lasso estimator  $\hbeta_{\widehat\lambda}^L$ selected according to the \LinSelect\ procedure described in Section~\ref{tuning-lasso} fulfills
\begin{equation}\label{eq:risque_proba_Linselect_lasso}
\norm{{\bf X}(\beta_{0}-\widehat \beta_{\widehat \lambda}^L)}^{2}_{2} \le C_{3}\inf_{\beta\neq 0}\left\{\|\X(\beta_{0}-\beta)\|^2_{2}+\frac{\phi_{*}\|\beta\|_0\log(p)}{ \kappa^2[5,\supp(\beta)]}\, \sigma^2 \right\}\ . 
\end{equation}
\end{prte}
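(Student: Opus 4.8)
The plan is to feed the Lasso path $\{\hbeta^L_\lambda,\ \lambda>0\}$ into the generic \LinSelect\ machinery and then quantify, for one cleverly chosen tuning value, both the prediction error and the support size of the corresponding Lasso estimator. The starting point is the probabilistic oracle inequality underlying Proposition~\ref{oracle-sparse} (namely Proposition~\ref{ORACLE-LINSELECT}): on a high-probability event it controls $\norm{\X(\beta_0-\hbeta^L_{\widehat\lambda})}_2^2$ by $C$ times $\inf_{\lambda\in\widehat\Lambda}\{\,\|\X\hbeta^L_\lambda-\X\beta_0\|_2^2+\inf_{S\in\widehat\S}[\,\|\X\hbeta^L_\lambda-\Pi_S\X\hbeta^L_\lambda\|_2^2+\dim(S)\log(p)\sigma^2\,]\,\}$. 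The decisive structural fact is that this selection step is performed \emph{without} knowledge of $\sigma$, so in the analysis we are entitled to calibrate the oracle tuning value using the true, unknown $\sigma$.

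First I would fix the deterministic level $\lambda_0=c\,\sigma\sqrt{2\log(p)}$ of Bickel--Ritov--Tsybakov type~\cite{bickeltsy08} and work on the event $\Omega=\ac{\,2\norm{\X^{T}\eps}_\infty\le\lambda_0\,}$. Since the columns of $\X$ are normalized, each coordinate of $\X^{T}\eps$ is $\mathcal{N}(0,\sigma^2)$, so a union bound over the $p$ coordinates gives $\P(\Omega)\ge1-C_1p^{-C_2}$ for $c$ large enough, matching the probability in the statement. On $\Omega$ the noise is dominated by the penalty level, and the classical Lasso prediction oracle inequality (see~\cite{bickeltsy08,geer_condition,Kolt11}) yields, for every competitor $\beta\neq0$,
\begin{equation*}
\|\X(\beta_0-\hbeta^L_{\lambda_0})\|_2^2\le C\ac{\|\X(\beta_0-\beta)\|_2^2+\frac{\|\beta\|_0\log(p)}{\kappa^2[5,\supp(\beta)]}\,\sigma^2},
\end{equation*}
the constant $\kappa[5,\cdot]$ appearing because, on $\Omega$, the error vector lies in the cone $\mathcal{C}(\xi,\supp(\beta))$ for a value $\xi$ slightly above the one ($\xi=4$) governing the basic inequality.

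Next I would control the number of active coordinates of $\hbeta^L_{\lambda_0}$. Reading the stationarity (KKT) conditions, every $j\in\supp(\hbeta^L_{\lambda_0})$ satisfies $2|\X_j^{T}(Y-\X\hbeta^L_{\lambda_0})|=\lambda_0$; summing over the active set, using $2\norm{\X^{T}\eps}_\infty\le\lambda_0$ on $\Omega$ together with the sparse-eigenvalue bound $\|\X_{\supp(\hbeta^L_{\lambda_0})}^{T}\X(\beta_0-\hbeta^L_{\lambda_0})\|_2\le\sqrt{\phi_*}\,\|\X(\beta_0-\hbeta^L_{\lambda_0})\|_2$, one obtains $\|\hbeta^L_{\lambda_0}\|_0\le C\phi_*\lambda_0^{-2}\|\X(\beta_0-\hbeta^L_{\lambda_0})\|_2^2$. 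Combined with the prediction inequality at $\beta=\beta_0$ this is $\lesssim\phi_*\|\beta_0\|_0/\kappa^2[5,\supp(\beta_0)]$, and the sparsity hypothesis $\|\beta_0\|_0\le C\,\kappa^2[5,\supp(\beta_0)]\,\phi_*^{-1}\,n/\log(p)$ is tuned precisely so that this quantity stays below $n/(3\log p)$. This guarantees $\widehat S_{\lambda_0}=\range(\X_{\supp(\hbeta^L_{\lambda_0})})\in\S$, i.e.\ $\lambda_0\in\widehat\Lambda$, so that $\lambda_0$ is an admissible competitor in the infimum above.

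The last step, and the main obstacle, is to turn these two estimates into a complexity term controlled by the oracle trade-off \emph{without} losing a spurious factor $\phi_*$ in front of the bias. Charging the naive space $S=\widehat S_{\lambda_0}$ costs $\|\hbeta^L_{\lambda_0}\|_0\log(p)\sigma^2$, which through the support bound carries a factor $\phi_*$ multiplying $\|\X(\beta_0-\beta)\|_2^2$, whereas~\eref{eq:risque_proba_Linselect_lasso} allows only a numerical constant there. The remedy is to exploit the \emph{inner} approximation infimum $\inf_{S\in\widehat\S}$: using the monotonicity of the Lasso supports along the path, one selects a sparser space $S\in\widehat\S$ of dimension comparable to the oracle sparsity $\|\beta\|_0$, so that the dimensional price is $\|\beta\|_0\log(p)\sigma^2\le\phi_*\|\beta\|_0\log(p)\kappa^{-2}[5,\supp(\beta)]\sigma^2$ (using $\phi_*\ge1$ and the eigenvalue ordering $\kappa^2[5,\cdot]\le\phi_*$), while the approximation error $\|\X\hbeta^L_{\lambda_0}-\Pi_S\X\hbeta^L_{\lambda_0}\|_2^2$ stays of the order of the right-hand side of the prediction inequality; arguing that the path indeed supplies such a near-oracle space is the technically delicate heart of the argument. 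Feeding the prediction bound and this balanced complexity bound into the \LinSelect\ oracle inequality, intersecting $\Omega$ with the high-probability event of Proposition~\ref{ORACLE-LINSELECT} (whose joint complement still contributes only the stated $C_1p^{-C_2}$), and finally taking the infimum over $\beta\neq0$ delivers~\eref{eq:risque_proba_Linselect_lasso}.
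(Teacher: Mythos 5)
Your proposal follows the paper's strategy for its first three steps (high-probability event on $\norm{\X^T\eps}_\infty$, Lasso prediction oracle inequality, KKT-based support bound to certify $\lambda_0\in\widehat\Lambda$), but it breaks down exactly at the step you yourself flag as "the technically delicate heart of the argument," and the remedy you sketch there does not work. Your plan to exploit the inner infimum over $S\in\widehat\S$ rests on "monotonicity of the Lasso supports along the path," which is false in general: as $\lambda$ decreases, variables can leave as well as enter the Lasso active set (this is precisely why the LARS-Lasso algorithm needs a dropping step), so the path provides no nested family of supports from which to extract a sparser approximation space. Even granting some monotonicity, you would still need, for every competitor $\beta$, a space $S\in\widehat\S$ of dimension $\approx\|\beta\|_0$ with $\|\X\hbeta^L_{\lambda_0}-\Pi_S\X\hbeta^L_{\lambda_0}\|_2^2$ controlled by the oracle risk; nothing in your argument (or in the Lasso path) supplies this, and you leave it unproven. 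A second, more minor, circularity: you bound $\|\X_{\supp(\hbeta^L_{\lambda_0})}^T\X(\beta_0-\hbeta^L_{\lambda_0})\|_2$ by $\sqrt{\phi_*}\,\|\X(\beta_0-\hbeta^L_{\lambda_0})\|_2$, but $\phi_*$ only controls supports of cardinality at most $n/(3\log p)$, which is exactly what you are in the process of establishing; the paper avoids this by the inflation inequality $\phi_{\hJ}\leq\pa{1+|\hJ|/d^*}\phi_*$ with $d^*=n/(3\log p)$, followed by a self-bounding argument (Lemma~\ref{lem:cardinal-lasso}).

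The missing idea is that the paper does \emph{not} use the standard calibration $\lambda_0\asymp\sigma\sqrt{\log p}$: it takes the inflated level $\lambda_0=\sqrt{16(4\vee\phi_*)\log(p)\sigma^2}$. With this choice, the conversion factor in the dimension price, $\phi_{\hJ}\log(p)\sigma^2/\lambda_0^2$, becomes a numerical constant (since $\phi_{\hJ}\leq 2\phi_*$ once $|\hJ|\leq d^*$), so Lemma~\ref{LEMMETECH2} gives $[\|\hbeta^L_{\lambda_0}\|_0\vee 1]\log(p)\sigma^2\lesssim\|\X(\beta_0-\hbeta^L_{\lambda_0})\|_2^2$, and Proposition~\ref{ORACLE-LINSELECT} applied with $\lambda=\lambda_0$ and $S=\widehat S_{\lambda_0}$ (zero approximation error) bounds the selected estimator's loss by a constant times $\|\X(\beta_0-\hbeta^L_{\lambda_0})\|_2^2$ alone. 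Applying the Lasso oracle inequality~\eref{KLT} at level $\lambda_0$ then keeps a numerical constant in front of the bias, while the price of the inflation appears only in the variance term, $\lambda_0^2\|\beta\|_0/\kappa^2[5,\supp(\beta)]\asymp\phi_*\|\beta\|_0\log(p)\sigma^2/\kappa^2[5,\supp(\beta)]$ --- exactly where the statement of Proposition~\ref{prte_risque_proba_LinSelect_Lasso} allows the factor $\phi_*$ to sit. In short: the $\phi_*$ cannot be removed by a cleverer choice of approximation space; it must be pushed into the variance term by inflating $\lambda_0$, and your proof as written cannot reach the claimed bound.
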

The bound~(\ref{eq:risque_proba_Linselect_lasso}) is similar to the bound stated above for the square-root Lasso, the most notable differences being the constant larger than 1 in front of the bias term  and the quantity $\phi_*$ in front of the variance term. We refer to the Appendix~\ref{proof:oracle-lasso-glasso} for a proof of Proposition~\ref{prte_risque_proba_LinSelect_Lasso}.

\subsection{Support estimation and inverse problem}
Until now, we only discussed estimation methods that perform well in prediction. Little is known
when the objective is to infer  $\beta_0$  or its support under unknown variance. 
\smallskip

\noindent\textsf{\textbf{Inverse problem.}} The square-root Lasso~\cite{sun,squarerootlasso} is proved to achieve near optimal risk bound for the inverse problems under suitable assumptions on the design $\X$. 
\smallskip

\noindent\textsf{\textbf{Support estimation.}} Up to our knowledge, there are no non-asymptotic results on support estimation for the aforementioned procedures in the unknown variance setting. Nevertheless, some related results and heuristics have been developed for the cross-validation scheme. If the tuning parameter $\lambda$ is
chosen to minimize the prediction error (that is take $\lambda=\lambda^*$ as defined in (\ref{tuning_optimal})), the Lasso is not consistent
for support estimation  (see~\cite{LLW06,MB06} for results in a random design setting). One idea to overcome this problem, is to choose
the parameter $\lambda$ that minimizes the risk of the so-called Gauss-Lasso estimator $\widehat{\beta}^{GL}_{\lambda}$ which is the least square
estimator over  the support of the Lasso estimator $\widehat{\beta}^{L}_{\lambda}$
\begin{equation}\label{definition_gauss_lasso}
 \widehat{\beta}^{GL}_{\lambda}:= \argmin_{\beta\in\mathbb{R}^p: \mathrm{supp}(\beta)\subset \mathrm{supp}(\widehat{\beta}^{L}_{\lambda})}\|Y-{\bf X}\beta\|_2^2\ .
\end{equation}
When the objective is support estimation, some numerical simulations~\cite{tsyrig10} suggest that it may be more advisable not to apply the selection schemes based on prediction risk (such as $V$-fold \CV\  or \LinSelect) to 
 the Lasso estimators but rather to  the  Gauss-Lasso estimators.
Similar remarks also apply for the Dantzig Selector~\cite{candes07}.

\subsection{Numerical Experiments}\label{NumExp.st}
We present two numerical experiments to illustrate the behavior of some of the above mentioned procedures for high-dimensional sparse linear regression. 
 The first one concerns the problem
of  tuning the parameter $\lambda$ of the Lasso algorithm for
estimating $\X \beta_{0}$. The procedures will be compared on the basis
of the prediction risk.
The second one concerns the problem of support estimation with
Lasso-type estimators. We will focus on the  false discovery rates (FDR) and the proportion of true discoveries (Power).
\smallskip

\noindent\textsf{\textbf{Simulation design.}}
The simulation design is the same as the one described
in Sections 6.1, and 8.2 of~\cite{linselect}, except that we restrict
to the case $n=p=100$. Therefore, 165 examples are  simulated. They are
inspired by examples
found in~\cite{tiblasso, zou05, zou_adaptive, 
  2008_Huang} and cover a large variety of situations.
The
simulation were carried out with {\tt R  (www.r-project.org)}, using
the library {\tt elasticnet}.
\smallskip

\noindent\textsf{\textbf{Experiment 1 : tuning the Lasso for prediction.}}\\
In the first experiment, we compare 10-fold \CV~\cite{Geisser75},
  \LinSelect~\cite{linselect} and the square-root Lasso~\cite{squarerootlasso,sun} (also called scaled Lasso)  for tuning the Lasso. Concerning the square-root Lasso, we set  $\lambda = 2\sqrt{2 \log(p)}$ (as suggested in \cite{sun}) and we compute the estimator using the algorithm described in Sun and Zhang~\cite{sun}.

For each
  tuning  procedure $\ell \in$ $\{\mbox{10-fold \CV},
  \mbox{\LinSelect}, \mbox{ square-root Lasso} \}$, we focus on the prediction risk ${\mathcal
  R}\Big[\widehat{\beta}^L_{\hat{\lambda}_{\ell}} ; \beta_{0}\Big]$ of the selected Lasso estimator $\widehat{\beta}^L_{\hat{\lambda}_{\ell}}$.\smallskip

For each simulated example $e =1, \ldots, 165$, we estimate on the basis of 400 runs
\begin{itemize}
\item the risk of the oracle~\eref{tuning_optimal} : ${\mathcal R}_{e} = {\mathcal
  R}\left[\widehat{\beta}_{\lambda^*} ; \beta_{0}\right]$,
\item the 
risk when selecting $\lambda$ with procedure $\ell$ : 
${\mathcal R}_{\ell, e} = {\mathcal
  R}\left[\widehat{\beta}_{\hat{\lambda}_{\ell}} ; \beta_{0}\right]$.
\end{itemize}

The comparison between the procedures is based on the comparison of the
means, standard deviations and quantiles of the risk ratios ${\mathcal
  R}_{\ell, e}/{\mathcal R}_{e}$ computed over all the simulated
examples $e=1, \ldots, 165$. The results are displayed in
Table~\ref{RiskLasso.tb}. 

\begin{table}[hptb]
\begin{tabular}{r|rr|rrrrr}
&&&\multicolumn{5}{c}{quantiles} \\ 
procedure&        mean & std-err & $0\%$ & $50\%$ & $75\%$ & $90\%$ & $95\%$ \\ \hline
Lasso $10$-fold \CV & 1.13 & 0.08 & 1.03 & 1.11 & 1.15  & 1.19 &1.24 \\
Lasso \LinSelect  &      1.19 & 0.48 & 0.97 & 1.03 & 1.06 & 1.19 &2.52\\
Square-root Lasso &    5.15 & 6.74 & 1.32 & 2.61 & 3.37 & 11.2 & 17 \\
\end{tabular}
\caption{\label{RiskLasso.tb} For each procedure $\ell$, mean,
  standard-error and quantiles of the ratios $\{{\mathcal R}_{\ell, e}/{\mathcal R}_{e},
e=1, \ldots, 165\}$.
}
\end{table}

For  10-fold \CV\ and \LinSelect, the risk ratios  are close to one.  For $90\%$ of the examples, the risk of the Lasso-\LinSelect\  is smaller than the risk of the Lasso-\CV,  but there are a few examples where the risk of the Lasso-\LinSelect\  is significantly larger than the risk of the Lasso-\CV.
 For the  square-root Lasso procedure,  the risk ratios are clearly larger
than for the two others.
An inspection of the results reveals that the square-root Lasso
 selects estimators with supports  of small size.  
 This feature can be interpreted as follows. Due to the bias of the Lasso-estimator, the residual variance tends to over-estimate the variance, leading the square-root Lasso to select a Lasso estimator $\hbeta_{\lambda}^L$ with large $\lambda$. 
  Consequently the risk is high.


\smallskip

\noindent\textsf{\textbf{Experiment 2 : variable selection with Gauss-Lasso and square-root Lasso.}}\\
We consider now the problem of support estimation, sometimes referred as the problem of variable selection. 
We implement three procedures. The Gauss-Lasso procedure tuned by either 10-fold \CV\ or \LinSelect\ and the square-root Lasso. The support of $\beta_{0}$ is estimated by the support of the selected estimator.

For each simulated example, the FDR and the Power are estimated on the
basis of 400 runs. The results are given on 
Table~\ref{FDRPower.tb}.

\begin{table}[hptb]
\begin{tabular}{r|rr|rrrrr}
\multicolumn{8}{c}{False Discovery rate} \\ 
&&&\multicolumn{5}{c}{quantiles} \\ 
procedure&        mean & std-err & $0\%$ & $25\%$& $50\%$ & $75\%$ & $90\%$  \\ \hline
Gauss-Lasso $10$-fold \CV & 0.28 & 0.26 & 0 & 0.08 & 0.22 & 0.35 & 0.74 \\
Gauss-Lasso \LinSelect  &  0.12 & 0.25 & 0 & 0.002 &0.02 & 0.13 & 0.33 \\
Square-root Lasso &       0.13 & 0.26 & 0 & 0.009 &0.023   & 0.07 & 0.32  \\
\hline \\ 
\multicolumn{8}{c}{Power} \\ 
&&&\multicolumn{5}{c}{quantiles} \\ 
procedure&        mean & std-err & $0\%$ & $25\%$ & $50\%$ & $75\%$ & $90\%$ \\ \hline
Gauss-Lasso $10$-fold \CV & 0.67 & 0.18 & 0.4 & 0.52 & 0.65 & 0.71 & 1 \\
Gauss-Lasso \LinSelect  &  0.56 & 0.33 & 0.002   & 0.23 & 0.56 & 0.93 & 1\\
Square-root Lasso &       0.59 & 0.28 & 0.013   & 0.41 & 0.57 & 0.80 & 1  \\
\end{tabular}
\caption{\label{FDRPower.tb} For each procedure $\ell$, mean,
  standard-error and quantiles of FDR and Power values.
}
\end{table}

It appears that the Gauss-Lasso \CV ~procedure gives greater values of the
FDR than the two others. The Gauss-Lasso \LinSelect ~and the
square-root Lasso behave similarly for the FDR, but the values of the power
are more variable for the \LinSelect ~procedure.\smallskip

\noindent\textsf{\textbf{Computation time.}}\\* 
Let us conclude this numerical section with the comparison of the
computation times between the methods. For all methods the computation
time depends on the maximum number of steps in the lasso algorithm and
for the LinSelect method, it depends on the cardinality of $\S$ or equivalently
on the maximum  number of non-zero components of
$\widehat{\beta}$.  The results are shown at
Table~\ref{CompTime.tb}. The square-root Lasso  is the less time consuming
method, closely  followed by the Lasso \LinSelect\ method. The
$V$-fold \CV\, carried out with the function {\tt cv.enet} of the {\tt
  R} package {\tt elasticnet}, pays the price of several calls to the
lasso algorithm.

\begin{table}[hptb]
\begin{tabular}{rrrr|rrr}
$n$ & $p$ &{\tt max.steps} & $k_{\max}$ &Lasso $10$-fold \CV & Lasso \LinSelect   &Square-root Lasso 
\\ \hline
100&100  & 100 &21 & 4 s & 0.21 s & 0.18 s\\ \hline
100 &500 & 100 &16 & 4.8 s&  0.43 s   &0.4 s\\ \hline
500&500 & 500 &80 & 300  s & 11 s & 6.3 s \\ 
\end{tabular}
\caption{\label{CompTime.tb} For each procedure computation time for
  different values of $n$ and $p$. The maximum number of
  steps in the lasso algorithm, is taken as ${\tt
    max.steps}=\min\left\{n,p\right\}$. For the \LinSelect\ procedure, the maximum  number of
  non-zero components of $\widehat{\beta}$, denoted  $k_{\max}$ is
  taken as $k_{\max} = \min\left\{p, n/\log(p)\right\}$. 
}
\end{table}



\section{Group-sparsity}\label{subsection_groupe}
In the previous section, we have made no prior assumptions on the form of $\beta_0$. In some applications, there are some known structures between the covariates. As an example, we treat the now classical case of group sparsity.
The covariates are assumed to be clustered into $M$ groups and when the coefficient $\beta_{0,i}$ corresponding to the covariate ${\bf X}_i$ is non-zero then it is likely that all the coefficients $\beta_{0,j}$ with variables ${\bf X}_j$ in the same group as ${\bf X}_i$ are non-zero. We refer to the introduction of  \cite{groupbach} for practical examples of this so-called group-sparsity assumption. Let $G_1,\ldots, G_M$ form a given partition of $\ac{1,\ldots, p}$. For $\lambda=(\lambda_1,\ldots,\lambda_M)$, the group-Lasso estimator  $\widehat{\beta}_{\lambda}$ is defined as the minimizer of the convex optimization criterion
\begin{equation}\label{critere_groupe}
 \|Y-{\bf X}\beta\|^2_2+\sum_{k=1}^M\lambda_k\|\beta^{G_{k}}\|_2\,,
\end{equation}
where  $\beta^{G_{k}}=(\beta_j)_{j\in G_k}$. 
The Criterion~\eref{critere_groupe} promotes solutions where all the coordinates of $\beta^{G_{k}}$ are either zero or non-zero,
 leading to group selection~\cite{groupyuan}. Under some assumptions on ${\bf X}$, Huang and Zhang~\cite{HuangZhang2010} or Lounici {\it et al.}~\cite{tsybakovgroup} provide a suitable choice of $\lambda=(\lambda_1,\ldots,\lambda_M)$ that leads to near optimal prediction bounds. As expected, this choice of  $\lambda=(\lambda_1,\ldots, \lambda_M)$ is proportional to $\sigma$.

As for the Lasso, $V$-fold \CV\ is widely used in practice to tune the penalty parameter $\lambda=(\lambda_1,\ldots,\lambda_M)$. To our knowledge, there is 
not yet any extension of the procedures described in Section \ref{section_tune_lasso} to  the group Lasso. An alternative to cross-validation is to use \LinSelect.
\smallskip

\noindent\textsf{\textbf{Tuning the group-Lasso with \LinSelect.}}
For any $\K\subset \ac{1,\ldots,M}$, we define the submatrix $\X_{(\K)}$ of $\X$ by only keeping the columns of $\X$ with index in $\bigcup_{k\in\K}G_{k}$. We also write $\X_{G_{k}}$ for the  submatrix of $\X$ built from the columns  with index in $G_{k}$. The collection $\S$ and the function $\Delta$ are given by
$$\S=\bigg\{\range(\X_{(\K)}) :\  1\leq |\K|\leq n/(3\log(M))\text{ and } \sum_{k\in \K}|G_{k}|\leq n/2-1\bigg\}$$
and $\Delta\big(\range(\X_{(\K)})\big)=\log\left[|\K|\binom{|\K|}{M}\right]$.  For a given $\Lambda\subset\R_{+}^M$, similarly to Section~\ref{tuning-lasso}, we define $\widehat\K_{\lambda}=\ac{k :\ \|\hbeta^{G_{k}}_{\lambda}\|_2\neq 0}$ and 
$$\widehat\S=\ac{\range(\X_{(\widehat\K_{\lambda})}),\ \lambda\in\widehat\Lambda},\quad \textrm{with}\ \ \widehat\Lambda=\ac{\lambda\in\Lambda,\ \range(\X_{(\widehat\K_{\lambda})})\in\S}.$$ 
Proposition~\ref{ORACLE-LINSELECT} in Appendix~\ref{appendix-linselect} ensures that
 we have for some constant $C>1$
\begin{equation*}
 \mathcal{R}\left[\widehat{\beta}_{\widehat{\lambda}};\beta_0\right]\leq 
 C\ \E\cro{\inf_{\lambda\in\widehat\Lambda} \ac{\|\X\hbeta_{\lambda}-\X\beta_{0}\|_2^2+\pa{\|\hbeta_{\lambda}\|_{0}\,\vee\, |\widehat\K_{\lambda}|\log(M)}\sigma^2}}.
\end{equation*}
In the following, we provide a more explicit bound. For simplicity, we restrict to the specific case where each  group $G_k$ has the same cardinality $T$. For $\K\subset \ac{1,\ldots,M}$, we define 
$\phi_{(\K)}$ as the largest eigenvalue of $\X_{(\K)}^T\X_{(\K)}$
and we set
\begin{equation}\label{eq:phi-glasso}
\phi_{*}=\max\ac{\phi_{(\mathcal{K})} :\quad 1\leq|\mathcal{K}|\leq \frac{n-2}{2T\vee3\log(M)}}.
\end{equation}
We assume that all the columns of $\X$ are normalized to 1 and
following Lounici {\it et al.}~\cite{tsybakovgroup}, we introduce for $1\leq s\leq M$
\begin{equation}\label{eq:kappa-glasso}
\kappa_{G}[\xi,s]=\min_{1\leq |\K|\leq s}\ \min_{u\in\Gamma(\xi,\K)}{\|\X u\|_{2}\over \|u_{(\K)}\|_{2}} 
\end{equation}
where $\Gamma(\xi,\K)$ is the cone of vectors $u\in \R^M\setminus\ac{0}$ such that $\sum_{k\in\K^c}\lambda_{k}\|u^{G_{k}}\|_{2}\leq \xi\sum_{k\in\K}\lambda_{k}\|u^{G_{k}}\|_{2}$.
 In the sequel, $\mathcal{K}_0$ stands for the set of groups containing non-zero components of $\beta_0$.
\begin{prte}\label{prop_risque_glasso}
There exist positive numerical constants $C$, $C_1$, $C_2$, and $C_3$ such that the following holds.
Assume that $\Lambda$ contains $\bigcup_{\lambda\in\mathbb{R}_+}\ac{(\lambda,\ldots,\lambda)}$, that $T\leq (n-2)/4$ and that 
\begin{eqnarray*}
1\leq|\mathcal{K}_{0}|\leq C\,\frac{\kappa^2_{G}[3,|\mathcal{K}_0|]}{\phi_*}\times \frac{n-2}{\log(M)\vee T}\,.
\end{eqnarray*}
Then, with probability larger than $1-C_1M^{-C_2}$, we have
\begin{equation*}
 \|{\bf X}\hbeta_{\widehat{\lambda}}-{\bf X}\beta_{0}\|_2^2
\leq C_{3}\,\frac{\phi_*}{\kappa^2_G[3,|\mathcal{K}_0|]}\ |\mathcal{K}_{0}|\ \left(T\vee \log(M)\right)\,.
\end{equation*}
\end{prte}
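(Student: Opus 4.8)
The plan is to combine the high-probability form of the \LinSelect\ oracle inequality, instantiated with the group collection $\S$ and weight $\Delta$ above, with a pathwise analysis of the group-Lasso at a single, well-chosen diagonal tuning parameter. Applying Proposition~\ref{ORACLE-LINSELECT} with the present $\hS$ and $\Delta$ (the constraint $\sum_{k\in\K}|G_{k}|\leq n/2-1$ in $\S$ keeps $\dim(S)\leq n/2$, so each $\widehat\sigma^2_{S}$ concentrates around $\sigma^2$ and may be replaced by it) yields an event of probability at least $1-C_1M^{-C_2}$ on which
\begin{equation*}
\norm{\X\hbeta_{\widehat\lambda}-\X\beta_{0}}_2^2\leq C\inf_{\lambda\in\widehat\Lambda}\ac{\norm{\X\hbeta_{\lambda}-\X\beta_{0}}_2^2+\pa{\|\hbeta_{\lambda}\|_{0}\vee|\widehat{\K}_{\lambda}|\log(M)}\sigma^2}.
\end{equation*}
It then suffices to exhibit one admissible $\lambda\in\widehat\Lambda$ making the right-hand side small.

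First I would pick the diagonal parameter $\lambda^*=(\lambda_{0},\ldots,\lambda_{0})$ with $\lambda_{0}$ of order $\sigma(\sqrt{T}+\sqrt{\log M})$, which belongs to $\Lambda$ by assumption. Following Lounici \emph{et al.}~\cite{tsybakovgroup} (see also~\cite{HuangZhang2010}), a Gaussian deviation bound for the group-wise noise norms $\norm{(\X^{T}\eps)^{G_{k}}}_2$ shows that, on an event of probability at least $1-C_1M^{-C_2}$, this $\lambda_{0}$ dominates the stochastic part of the KKT conditions. On that event the error $u=\hbeta_{\lambda^*}-\beta_{0}$ satisfies the cone condition $u\in\Gamma(3,\K_{0})$, so the restricted eigenvalue $\kappa_{G}[3,|\K_{0}|]$ applies and gives the prediction bound
\begin{equation*}
\norm{\X\hbeta_{\lambda^*}-\X\beta_{0}}_2^2\leq C\,\frac{|\K_{0}|\,(T\vee\log M)}{\kappa_{G}^2[3,|\K_{0}|]}\,\sigma^2 .
\end{equation*}

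Second, I would bound the number $|\widehat{\K}_{\lambda^*}|$ of selected groups. Arguing from the KKT conditions and the largest sparse eigenvalue $\phi_{*}$ — the group analogue of the Lasso support-size bound used for Proposition~\ref{prte_risque_proba_LinSelect_Lasso} — one gets $|\widehat{\K}_{\lambda^*}|\leq C\,(\phi_{*}/\kappa_{G}^2[3,|\K_{0}|])\,|\K_{0}|$. The sparsity hypothesis $|\K_{0}|\leq C(\kappa_{G}^2/\phi_{*})(n-2)/(\log M\vee T)$ then forces, for the constant $C$ small enough, $|\widehat{\K}_{\lambda^*}|\leq (n-2)/(2T\vee 3\log M)$, so that $\range(\X_{(\widehat{\K}_{\lambda^*})})\in\S$ and hence $\lambda^*\in\widehat\Lambda$; this is also exactly the index range over which $\phi_{*}$ is defined, so the eigenvalue bound is legitimate. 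Plugging $\lambda^*$ into the oracle inequality, using $\|\hbeta_{\lambda^*}\|_{0}\leq T|\widehat{\K}_{\lambda^*}|$, both the $\|\hbeta_{\lambda^*}\|_{0}$ and the $|\widehat{\K}_{\lambda^*}|\log M$ contributions are at most $C(\phi_{*}/\kappa_{G}^2)|\K_{0}|(T\vee\log M)\sigma^2$, which (since $\phi_{*}\geq1$ for normalized columns) dominates the prediction term and yields the announced bound.

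The hard part will be the selected-group cardinality bound $|\widehat{\K}_{\lambda^*}|\leq C(\phi_{*}/\kappa_{G}^2)|\K_{0}|$: it is what introduces $\phi_{*}$ into the final estimate and what must be controlled to certify $\lambda^*\in\widehat\Lambda$, closing the apparent circularity between ``the selected model is small enough to lie in $\S$'' and ``$\phi_{*}$ may be applied on $\widehat{\K}_{\lambda^*}\cup\K_{0}$''. Keeping track of the constants so that the hypothesis on $|\K_{0}|$ is precisely strong enough to guarantee the range condition $|\widehat{\K}_{\lambda^*}|\leq(n-2)/(2T\vee3\log M)$, while reproducing the $T\vee\log M$ scaling coming from the group penalty, is the delicate bookkeeping of the argument.
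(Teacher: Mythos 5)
Your architecture is exactly the paper's: apply the high-probability version of the \LinSelect\ oracle inequality (Proposition~\ref{ORACLE-LINSELECT}) with the group collection $\S$ and weight $\Delta$, exhibit a single diagonal tuning parameter $\lambda^*$, bound its prediction error via Theorem~3.1 of Lounici \emph{et al.}~\cite{tsybakovgroup}, and certify $\lambda^*\in\widehat\Lambda$ through the KKT support-size bound (Lemma~\ref{LEMMETECH2}) combined with the sub-additivity $\phi_{(\K)}\leq(1+|\K|/k^*)\phi_*$ and a self-bounding argument; this is precisely the content of the paper's Lemma~\ref{lem:cardinal-glasso} and the two displays that follow it.

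The one step that fails as written is the choice $\lambda_0\asymp\sigma(\sqrt{T}+\sqrt{\log M})$ together with the claim that it dominates the group-wise noise norms with probability $1-C_1M^{-C_2}$. The paper only assumes that the columns of $\X$ are normalized, not that they are orthonormal within each group, so the diagonal block $\X_{G_k}^T\X_{G_k}$ can have operator norm as large as $T$ (take identical columns within a group). In that case $\|\X_{G_k}^T\eps\|_2=\sqrt{T}\,|\xi_k|$ with $\xi_k\sim\mathcal{N}(0,\sigma^2)$, and the maximum over the $M$ groups is of order $\sigma\sqrt{T\log M}$, which exceeds your $\lambda_0$ by a factor up to $\sqrt{T\wedge\log M}$. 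This is exactly why the paper (and Lounici \emph{et al.}) take $\lambda_k^2=96\,\phi_*(T\vee 3\log M)\,\sigma^2$, see~(\ref{lambda-glasso}): the $\phi_*$ factor is needed for noise domination, and in the paper's proof it is this factor in $\lambda_1^2$, propagated through the prediction bound~(\ref{eq:glasso-tsyb}), that produces the $\phi_*$ in the final estimate --- not the support-size bound alone, as in your account. In particular your intermediate prediction bound $C|\K_0|(T\vee\log M)\sigma^2/\kappa^2_G[3,|\K_0|]$ without $\phi_*$ is not available under the stated assumptions (if it were, the proposition itself could be sharpened). The gap is local and fixable: substituting the paper's $\lambda_0$, the level $\lambda_0^2$ cancels in the inequality $|\hK_{\lambda^*}|\lambda_0^2\leq 16\,\phi_{(\hK_{\lambda^*})}\|\X\hbeta_{\lambda^*}-\X\beta_0\|_2^2$, so your cardinality bound $|\hK_{\lambda^*}|\lesssim(\phi_*/\kappa_G^2)|\K_0|$, the certification $\lambda^*\in\widehat\Lambda$, and the final bound (with $\phi_*$ now also entering through the prediction term) all go through exactly as you describe.
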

This proposition provides a bound comparable to the bounds  of Lounici {\it et al.}~\cite{tsybakovgroup}, without requiring the knowledge of the variance. Its proof can be found in Appendix~\ref{proof:oracle-lasso-glasso}.

\section{Variation-sparsity}\label{section_rupture}
We focus in this section on the  \emph{variation-sparse} regression. 
We recall that  the vector $\beta^V\in\R^{p-1}$ of the variations  of $\beta$ has for coordinates $\beta_{j}^{V}=\beta_{j+1}-\beta_{j}$
and that the variation-sparse setting corresponds to the setting where the  vector of variations $\beta_{0}^V$ is coordinate-sparse. In the following, we restrict to the case where  $n=p$ and  $\X$ is the identity matrix. In this case, the problem of variation-sparse regression coincides with the problem of segmentation of the mean of the vector $Y=\beta_{0}+\eps$.

For any subset $\cI\subset\ac{1,\ldots,n-1}$, we define $S_{\cI}=\big\{\beta\in\R^n\,:\ \supp(\beta^V)\subset\cI\big\}$ and
$\hbeta_{\cI}=\Pi_{S_{\cI}}Y$.  For any integer $q\in\ac{0,\ldots,n-1}$, we define also the "best" subset of size $q$ by
$$\widehat\cI_{q}=\argmin_{|\cI|=q}\|Y-\hbeta_{\cI}\|_2^2.$$
Though the number of subsets $\cI\subset\ac{1,\ldots,n-1}$ of cardinality $q$  is of order
$n^{q+1}$, this minimization can be performed using dynamic programming with a
complexity of order $n^{2}$~\cite{Guthery}.  To select $\widehat\cI=\widehat\cI_{\hat q}$ with  $\hat{q}$ in $\ac{0,\ldots,n-1}$, any of the generic selection schemes of Section~\ref{generic_schemes.st} can be applied. Below, we instantiate these schemes and present some alternatives.

\subsection{Penalized empirical loss}\label{histo-empirique}
When the variance $\sigma^2$ is known,
penalized log-likelihood model selection amounts to select  
a subset  $\widehat\cI$ which minimizes a criterion  of the form
$\| Y - \hbeta_{\cI} \|_{2}^{2} + \pen(  
\mathrm{Card}(\cI))$. This is equivalent to select $\widehat\cI=\widehat\cI_{\hat q}$ with  $\hat{q}$  minimizing
\begin{equation}\label{critere-rupture}
\crit(q)= \| Y - \hbeta_{\widehat{\cI}_{q}} \|_{2}^{2} + \pen(q).
\end{equation}

 Following the work of Birg\'e and Massart~\cite{BM01}, Lebarbier~\cite{2005_Lebarbier_SignProc}  considers the penalty 
$$ \pen(q) = (q+1)  \left( c_{1} \log(n/(q+1)) + c_{2}\right)\,\sigma^2$$
and determines the constants $c_{1}=2, c_{2}=5$ by extensive numerical
experiments (see also Comte and Rozenholc~\cite{CR04} for a similar approach in a more general setting). With this choice of the penalty, the procedure satisfies a bound of the form
\begin{equation}\label{oracle-rupture}
\mathcal{R}\cro{\widehat\beta_{\widehat \cI},\beta_{0}}\ \leq\ C\,\inf_{\cI\subset \ac{1,\ldots,n-1}}\ac{\|\hbeta_{\cI}-\beta_{0}\|_2^2+(1+ |\cI|)\log(n/(1+ |\cI|))\,\sigma^2}.
\end{equation}

When $\sigma^{2}$ is unknown, several approaches have been 
proposed.
\smallskip

\noindent\textbf{\textsf{Plug-in estimator.}}
The idea is to replace  $\sigma^{2}$ in $\pen(q)$ by an estimator of the variance such as  $\widehat{\sigma}^{2}  = \sum_{i=1}^{n/2} (Y_{2i} - Y_{2i-1})^{2}/n$,
or one of the estimators proposed by Hall and al.~\cite{HKT90}. No theoretical
results are proved in a non-asymptotic framework.
\smallskip

\noindent\textbf{\textsf{Estimating the variance by the residual least-squares.}}
 Baraud et al.~\cite{BGH09} Section 5.4.2 propose to select $q$ by minimizing a penalized log-likelihood criterion. This criterion can be written in the form 
 $\crit(q)= \| Y - \hbeta_{\widehat{\cI}_{q}} \|_{2}^{2}
(1 + K \pen(q))$, 
with $K>1$ and  the penalty $\pen(q)$  solving
\begin{equation*}
\E \left[\left( U - {\pen(q)} V\right)_{+}\right] =
 \frac{1}{(q+1)\binom{n-1}{q}}\ ,
\end{equation*}
 where $(.)_{+}=\max(.,0)$, and
$U$, $V$ are two independent $\chi^{2}$ variables with
respectively $q+2$ and $n-q-2$ degrees of freedom. The resulting estimator $\hbeta_{\widehat\cI}$, with $\widehat\cI=\widehat\cI_{\hat q}$, 
satisfies a non asymptotic risk bound similar to~\eref{oracle-rupture} for all $K>1$.  The choice $K=1.1$ is suggested for the practice.
\smallskip

\noindent\textbf{\textsf{Slope heuristic.}}
Lebarbier~\cite{2005_Lebarbier_SignProc} implements the slope heuristic
introduced by Birg\'e and Massart~\cite{massart_pente} for handling the unknown variance $\sigma^{2}$. The method consists in calibrating the
penalty directly, without estimating $\widehat{\sigma}^{2}$.
It is based on the following principle. First, there exists a so-called \emph{minimal} penalty $\pen_{\min}(q)$ such that choosing $\pen(q)=K\pen_{\min}(q)$ in~\eref{critere-rupture} with $K<1$ can lead to a strong  overfit, whereas for $K>1$ the bound~\eref{oracle-rupture} is met. 
Second, it  can be shown that there exists
a  \emph{dimension jump} around the minimal penalty, allowing  to
estimate $\pen_{\min}(q)$ from the data. The slope heuristic then proposes to select $q$ by minimizing the criterion
 $\crit(q)= \| Y - \hbeta_{\widehat{\cI}_{q}} \|_{2}^{2}
 + 2\,\widehat{\pen}_{\min}(q)$.
Arlot and
Massart~\cite{2010_ArlotMassart_JMLR} provide a non asymptotic risk bound for this procedure. 
Their results are proved in a general regression model with
heteroscedatic and non Gaussian errors, but with a constraint on the number of models per dimension which is not met for the family of  models $(S_{\cI})_{\cI\subset\ac{1,\ldots,n-1}}$. Nevertheless, the authors indicate how to
generalize their results for the problem of signal segmentation.

Finally, for practical issues, different procedures for estimating the
minimal penalty  are compared and
implemented in Baudry et al.~\cite{BMM10}.

\subsection{CV procedure}

In a recent paper, Arlot and
C\'elisse~\cite{2010_ArlotCelisse_StaComp} consider the problem of
 signal segmentation using cross-validation. Their
results apply in the  heteroscedastic case. They
consider several \CV-methods, the leave-one-out, 
leave-$p$-out and $V$-fold \CV\ for estimating the quadratic loss. They
propose two cross-validation schemes. The first one, denoted \emph{Procedure
  5}, aims to estimate directly $\E\left[\|\beta_{0} - \beta_{\widehat{I}_{q}}\|_2^{2}\right]$, while the second one, denoted  \emph{Procedure 6}, relies on two steps where the cross-validation is used first for choosing the
best partition of dimension $q$, then the best dimension $q$. They
show that the leave-$p$-out \CV\ method can be
implemented with a complexity of order $n^{2}$, and they give a control of 
the expected \CV\ risk. The use of CV leads to some restrictions on
the subsets $\cI$ that compete for estimating $\beta_{0}$. This problem
is discussed in~\cite{2010_ArlotCelisse_StaComp}, Section 3 of the
supplemental material.

\subsection{Alternative for very high-dimensional settings}

When $n$ is very large, the dynamic programming optimization can become computationally  too intensive.  An attractive alternative is based on the fused Lasso proposed by Tibshirani et
al.~\cite{2005_TSR_JRSSB}. The estimator $\hbeta_{\lambda}^{TV}$ is defined by minimizing the convex criterion
$$\|Y-\beta\|^2_{2}+\lambda \sum_{j=1}^{n-1} |\beta_{j+1}-\beta_{j}|,$$
where the total-variation norm $\sum_j|\beta_{j+1}-\beta_{j}|$ promotes solutions which are variation-sparse.
The family $(\hbeta^{TV}_{\lambda})_{\lambda\geq0}$ can be computed
very efficiently with the LARS-algorithm, see Vert and
Bleakley~\cite{VertNIPS2010}. A sensible choice of the parameter $\lambda$ must be proportional to $\sigma$. When the variance $\sigma^2$ is unknown,  the parameter $\lambda$ 
 can be selected  either by $V$-fold \CV\ 
or by \LinSelect\ (see Section 5.1 in~\cite{linselect} for details). 

\section{Extensions}\label{section_extension}

\subsection{Gaussian design and graphical models} Assume that the design ${\bf X}$ is now random and that the $n$ rows ${\bf X}^{(i)}$ are independent observations of a Gaussian vector with mean $0_p$ and unknown covariance matrix $\Sigma$. This setting is mainly motivated by applications in compressed sensing~\cite{donoho_compressed} and  in Gaussian graphical modeling. Indeed, Meinshausen and B\"uhlmann~\cite{MB06} have proved that it is possible to estimate the graph of a Gaussian graphical model by studying linear regression with Gaussian design and unknown variance. If we work 
 conditionally  on the observed ${\bf X}$ design, then all the results and methodologies described in this survey still apply. Nevertheless, these prediction results do not really take into account the fact that the design is random. 
 In this setting,  it is more  natural to consider 
 the integrated prediction risk $\E\big[\|\Sigma^{1/2}(\hbeta-\beta_{0})\|_2^2\big]$ rather than the risk~\eref{definition_risque_prediction}.
Some procedures~\cite{Giraud08,verzelen_regression} have been proved to achieve optimal risk bounds with respect to this risk but they are computationally intractable in a high-dimensional setting.
In the context of Gaussian graphical modeling, the 
  procedure \GGMSelect~\cite{ggmselect} is designed to select among any collection of graph estimators and it is proved  to achieve near optimal risk bounds in terms of the integrated prediction risk.

\subsection{Non Gaussian noise} 
A few results do not require that the noise $\varepsilon$ follows a Gaussian distribution. The Lasso-type procedures such as the square-root Lasso~\cite{sun,squarerootlasso}  do not require the normality of the noise and extend to other distributions. In practice, it seems that cross-validation procedures  still work well for other distributions of the noise.

\subsection{Multivariate regression}
Multivariate regression deals with $T$ simultaneous linear regression models  $y_{k}=\X\beta_{k}+\eps_{k}$, $k=1,\ldots,T$. Stacking the $y_{k}$'s in a $n\times T$ matrix $Y$, we obtain the model
$Y=\X B_{0}+E$, where $B_{0}$  is a $p\times T$ matrix with columns given by $\beta_{k}$ and $E$ is a $n\times T$ matrix with i.i.d.\ entries. The classical structural assumptions on $B_{0}$ are either that most rows of $B_{0}$ are identically zero, or the rank of $B_{0}$ is small. The first case is a simple case of group sparsity and can be handled by the group-lasso as in Section~\ref{subsection_groupe}. The second case, first considered by Anderson~\cite{Anderson51} and Izenman~\cite{Izenman75},  is much more non-linear. Writing $\|.\|_{F}$ for the Frobenius (or Hilbert-Schmidt) norm, the problem of selecting among the estimators
$$\widehat B_{r}=\argmin_{B:\textrm{rank}(B)\leq r}\|Y-\X B\|_{F}^2,\quad r\in \ac{1,\ldots,\min(T,\textrm{rank}(\X))}$$
has been investigated recently from a non-asymptotic point of view by Bunea {\it et al.}~\cite{BuneaSheWegkamp11} and Giraud~\cite{Giraud11}. 
The prediction risk of $\widehat B_{r}$ is of order of
$$\E\cro{\|\X \widehat B_{r}-\X B_{0}\|^2_{F}}\ \asymp\ \sum_{k\geq r} s_{k}^2(\X B_{0})+ r\pa{n+\mathrm{rank}(X)}\sigma^2,$$
where $s_{k}(M)$ denotes the $k$-th largest singular value of the matrix $M$. Therefore, a sensible choice of $r$ depends on $\sigma^2$. The first selection criterion introduced by  Bunea {\it et al.}~\cite{BuneaSheWegkamp11} requires the knowledge of  the variance $\sigma^2$. 
To handle the case of unknown variance,  Bunea {\it et al.}~\cite{BuneaSheWegkamp11} propose to plug an estimate of the variance in their selection criterion (which works  when rank$(\X)<n$), whereas 
Giraud~\cite{Giraud11} introduces a penalized log-likelihood criterion independent of the variance.
Both papers provide oracle risk bounds for the resulting estimators  showing rate-minimax adaptation.

Several recent papers~\cite{Bach08, NeghabanWainwright, RohdeTsybakov, BuneaSheWegkamp11, Kolt11} have investigated another strategy for the low-rank setting. For a positive $\lambda$, the matrix $B_{0}$ is estimated by 
$$\widehat{B}_{\lambda}\in\argmin_{B\in\R^{p\times T}}\Big\{ \|Y-\X B\|_{F}^2+\lambda\sum_{k}s_{k}(B)\Big\}.$$
Translating the work on trace regression of Koltchinskii {\it et al.}~\cite{Kolt11}  into the setting of multivariate regression provides (under some conditions on $\X$)
an oracle bound on the risk of  $\widehat{B}_{\lambda^*}$ with $\lambda^{*}=3s_{1}(X)\big(\sqrt{T}+\sqrt{\mathrm{rank}(X)}\,\big)\sigma$. We also refer to Giraud~\cite{Giraud11b} for a slight variation of this result requiring no condition on the design $\X$. Again, the value of $\lambda^*$ is proportional to $\sigma$. To handle the case of unknown variance, Klopp~\cite{Klopp11} adapts the concept of the square-root Lasso~\cite{squarerootlasso} to this setting and provides an oracle risk bound for the resulting procedure.

\subsection{Nonparametric regression}
In the nonparametric regression model~\eref{modele_generale}, classical estimation procedures include local-polynomial estimators, kernel estimators, basis-projection estimators, $k$-nearest neighbors etc. All these procedures depend on one (or several) tuning parameter(s), whose optimal value(s) scales with the variance $\sigma^2$. $V$-fold \CV\ is widely used in practice for choosing these parameters, but little is known on its theoretical performance. 

The class of linear estimators (including spline smoothing,  Nadaraya estimators, $k$-nearest neighbors, low-pass filters, kernel ridge regression, etc) has attracted some attention in the last years. Some papers have investigated the tuning of some specific family of estimators. For example, Cao and Golubev~\cite{CaoGolubev06} provides a tuning procedure for spline smoothing and Zhang~\cite{Zhang05} analyses in depth kernel ridge regression. Recently, two papers have focused on the tuning of arbitrary linear estimators when the variance $\sigma^2$ is unknown. Arlot and Bach~\cite{ArlotBach09} generalize the slope heuristic to symmetric linear estimators with spectrum in $[0,1]$ and prove an oracle bound for the resulting estimator. Baraud {\it et al.}~\cite{linselect} Section~4 shows that \LinSelect\ can be used for selecting among a (almost) completely arbitrary collection of linear estimators (possibly non-symmetric and/or singular).
Corollary~2 in~\cite{linselect}  provides an oracle bound for the selected estimator under the mild assumption that some effective dimension of the linear estimators is not larger than a fraction of $n$.


\bibliographystyle{acmtrans-ims}

\bibliography{estimation}

\appendix
\numberwithin{equation}{section}

\section{A note on \BIC\ type criteria}\label{section_appendix_BIC}

The \BIC\ criterion has been initially introduced~\cite{Schwartz78} to select an estimator among a collection of constrained maximum likelihood estimators. Nevertheless, modified versions of this criterion are often used for tuning more general estimation procedures. The purpose of this appendix is to illustrate why we  advise against this approach in a high-dimensional setting.

\begin{defi}{\bf A Modified \BIC\ criterion}.
Suppose we are given a collection  $(\widehat{\beta}_{\lambda})_{\lambda\in\Lambda}$ of estimators  depending on a tuning parameter $\lambda\in \Lambda$. For any $\lambda\in\Lambda$, we consider $\widehat{\sigma}_{\lambda}^2=\|Y-{\bf X}\widehat{\beta}_{\lambda}\|_2^2/n$, and define the modified $\BIC$ 
\begin{equation}\label{critere_BIC_generalise}
\widehat{\lambda}\in \argmin_{\lambda\in \widehat{\Lambda}} \ac{-2\mathbf{L}_n(\hbeta_{\lambda},\widehat{\sigma}_\lambda)+ \log(n)\|\widehat{\beta}_{\lambda}\|_0} ,
\end{equation}
where $\mathbf{L}_n$ is the log-likelihood and $\widehat{\Lambda}=\ac{\lambda\in\Lambda\,:\ \|\widehat{\beta}_{\lambda}\|_0\leq n/2}$.
\end{defi}
Sometimes, the $\log(n)$ term is replaced by $\log(p)$. Replacing $\Lambda$ by $\widehat{\Lambda}$ allows to avoid trivial estimators.
First, we would like to emphasize that there is {\it no} theoretical warranty that the selected estimator does not overfit in a high-dimensional setting. In practice, using this criterion often leads to overfitting. Let us illustrate this with a simple experiment.\smallskip

\noindent
\textbf{\textsf{Setting.}} We consider the model
\begin{equation}\label{modele_sequence_gaussien}
Y_{i}=\beta_{0,i}+\eps_{i},\ i=1,\ldots,n\ ,
\end{equation}
with $\eps\sim\mathcal{N}(0,\sigma^2 I_n)$ so that $p=n$ and ${\bf X}=I_n$. Here, we fix $n=10000$, $\sigma=1$ and $\beta_0=0_n$. \smallskip

\noindent
\textbf{\textsf{Methods.}} We apply the modified \BIC\ criterion to tune the Lasso~\cite{tiblasso}, SCAD~\cite{fan_scad} and  the hard thresholding estimator.
The hard thresholding estimator $\widehat{\beta}^{HT}_{\lambda}$ is defined for any $\lambda>0$ by $[\widehat{\beta}^{HT}_{\lambda}]_i=Y_i\mathbf
{1}_{|Y_i|\geq \lambda}$. Given $\lambda>0$ and $a>2$, the SCAD estimator $\widehat{\beta}^{SCAD}_{\lambda,a}$ is defined as the minimizer of the penalized criterion
$
 \|Y-{\bf X}\beta\|_2^2+ \sum_{i=1}^n p_{\lambda}(|\beta_i|)\ ,$
where for $x>0$, $$p'_{\lambda}(x)= \lambda\mathbf{1}_{x\leq \lambda}+ (a\lambda-x)_{+}\mathbf{1}_{x>\lambda}/(a-1)\ .$$
For the sake of simplicity we fix $a=3$. We note $\widehat{\beta}^{L;\BIC}$, $\widehat{\beta}^{SCAD;\BIC}_a$, and $\widehat{\beta}^{HT;\BIC}$ for  the Lasso, hard thresholding, and SCAD estimators selected by the modified \BIC\ criterion.  
\smallskip

 \noindent 
\textbf{\textsf{Results.}}
 We have realized $N=200$ experiments. For each of these experiments, the estimator $\widehat{\beta}^{L;\BIC}$, $\widehat{\beta}^{SCAD,\BIC}_a$ and $\widehat{\beta}^{HT;\BIC}$ are computed. The mean number of non-zero components and the estimated risk $\mathcal{R}[\widehat{\beta}^{*;\BIC};0_n]$ are reported in Table~\ref{tableau_dimension}.

\begin{Table} [h] 
 
\caption{Estimated risk and Estimated number of non zero components for $\widehat{\beta}^{L;\BIC}$, $\widehat{\beta}^{SCAD;\BIC}$, and $\widehat{\beta}^{HT;\BIC}$. \label{tableau_dimension} }

\begin{center}

\begin{tabular}{c|c|c|c|}
  &{\normalsize LASSO} & {\normalsize SCAD}& {\normalsize Hard Thres.}  \\\hline &&&  \\
{\normalsize$\widehat{\mathcal{R}}[\widehat{\beta}^{*;\BIC};0_p]$} &{\normalsize 4.6$\times 10^{-2}$}&{\normalsize 1.6$\times 10^1$}   & {\normalsize 3.0$\times 10^{2}$} \\\hline &&&\\ 
{\normalsize Mean of $ {\|\widehat{\beta}^{*;\BIC}\|_0}$}& {\normalsize 0.025}& {\normalsize 86.9} & {\normalsize 28.2}\\ \hline
\end{tabular}
 
\end{center}
\end{Table}
Obviously, the SCAD and hard Thresholding methods select too many irrelevant variables when they are tuned with \BIC. Moreover, their risks are quite high. Intuitively, this is due to the fact that the  $\log(n)$ (or $\log(p)$) term in the \BIC\ penalty is too small in this high-dimensional setting ($n=p$). 

For the Lasso estimator, a very specific phenomenon occurs due to the soft thresholding effect. In the discussion of~\cite{lars}, Loubes and Massart advocate that soft thresholding estimators penalized by Mallows' $C_p$~\cite{mallows73} penalties should yield good results, while hard thresholding estimators penalized by Mallows' $C_p$ are known to highly overfit. This strange behavior is due to the bias of the soft thresholding estimator. Nevertheless, Loubes and Massart' arguments have been developed for an orthogonal design. In fact, there is no non-asymptotic justification that  the Lasso tuned by \BIC\ or \AIC\ performs well for general designs ${\bf X}$.\smallskip

\noindent 
\textbf{\textsf{Conclusion.}} The use of the modified \BIC\ criterion to tune estimation procedures in a high-dimensional setting is not supported by theoretical results. It is proved to overfit in the case of thresholding estimators~\cite[Sect. 3.2.2]{BGH09}. 
Empirically, \BIC\ seems to  overfit except for the  Lasso. We advise the practitioner to avoid \BIC\ (and \AIC) when $p$ is at least of the same order as $n$. For instance, \LinSelect\ is supported by non-asymptotic arguments and by empirical results~\cite{linselect} in contrast to \BIC.

\section{Minimax Adaptive procedures}\label{section_appendix_minimax}

In this section, we detail procedures that are minimax adaptive to the sparsity $k$ simultaneously for all designs ${\bf X}$ in the sense of (\ref{minoration_minimax_prediction}). In most settings, these procedures are not of practical interest as they are intractable for large $p$. We present them as theoretical benchmarks to assess the quality of fast procedures.

Given a subspace $S$ of $\mathbb{R}^n$, we define $\widehat{\beta}^{\perp}_S$ as a least-squares estimator of $\beta_0$ such that ${\bf X}\beta$ is included in $S$:
\[\widehat{\beta}^{\perp}_S\in\argmin_{\beta\in\R^p,\ {\bf X}\beta\in S}\|Y-{\bf X}\beta\|_2^2\ .\]
We consider the collections of subspaces:
\begin{eqnarray*}
\S_1&=& \Big\{S=\range(\X_{\J}),\ \J\subset \ac{1,\ldots,p}\setminus\{\emptyset\},\quad 2|\J|[1+\log(p/|\J|)]\leq n\Big\}\\
&&\ \bigcup \ \range(\X_{\{1,\ldots p\}})\,, \\
\S_2&=&\Big\{S=\range(\X_{\J}),\ \J\subset \ac{1,\ldots,p}\setminus\{\emptyset\},\quad |\J|\leq (n-1)/4\Big\}\ .
\end{eqnarray*}
 Finally, we note $k^*:= \max\{k:2k[1+\log(p/k)]\leq n\}$. To simplify the presentation, we assume throughout this section that $n\leq p$ and that $\mathrm{Rank}({\bf X})> k^*$.

\subsection{Known variance}\label{section_appendix_minimax_1}

\noindent{\bf A penalization strategy}.
The model selection paradigm aims at selecting an estimator
$\widehat{\beta}_{\widehat{S}}$ with the smallest possible risk. One
strategy to tackle the selection problem amounts to minimizing a
least-squares criterion penalized by the "complexity" of the
collection of models under consideration. We select $\widehat{S}^{BM}$ as one minimizer over $S\in\S_1$ of the following criterion
\begin{eqnarray*}
\| Y-\Pi_S Y\|_2^2+\left\{ \begin{array}{cl}                  
4\dim(S)\left[4 +\log\left(\frac{p}{\dim(S)}\right)\right]\sigma^2 & \text{if } 
\dim(S)\leq k^*\\
2n\sigma^2 &\text{if }  \dim(S)=\mathrm{Rank}({\bf X})\,,
                  \end{array}\right.
\end{eqnarray*}
We write $\widetilde{\beta}^{BM}:= \widehat{\beta}^{\perp}_{\widehat{S}^{BM}}$. More general forms of penalties are discussed in \cite{BM01}.\\

\noindent{\bf An aggregation strategy}. In contrast to model selection, model aggregation aims at mixing a collection of estimators.
Following, Leung and Barron~\cite{LB06}, we mix the least-squares estimators $\widehat{\beta}_S$ in the following way
\[\widetilde{\beta}^{LB}:=\sum_{S\in\S_1}\omega_S \widehat{\beta}^{\perp}_S\ ,\]
where the weights $\omega_S$ sum to one and for any $S\in \S_1$, $\omega_S$ is proportional to 
\begin{eqnarray*}
\exp\left[-\frac{\|Y-\Pi_S Y\|_2^2+2\sigma^2\dim(S)}{4\sigma^2}\right]\times  \left\{ \begin{array}{cl} \left[k^*\binom{\dim(S)}{p}\right]^{-1} & \text{if } 
\dim(S)\leq k^*\\
1 &\text{if }  \dim(S)=\mathrm{Rank}({\bf X}).
                  \end{array}\hspace{-1cm}\right.
\end{eqnarray*}
 We refer to~\cite{LB06} for more general forms of the aggregation procedures.\\

\noindent{\bf Risk bounds}. In the next proposition, we state that $\widetilde{\beta}^{BM}$ and $\widetilde{\beta}^{LB}$ are minimax adaptive to the sparsity for all designs ${\bf X}$ in the sense of (\ref{minoration_minimax_prediction}).

\begin{prte}
There exist numerical constants $C_1$ and $C_2$ such that the following holds.
For any design ${\bf X}$, any $k\in \{1,\ldots, n\}$ and any vector $\beta_0$   such that
  $\|\beta_0\|_0=k$,  we have
 \begin{eqnarray*}
  \mathcal{R}\left[\widetilde{\beta}^{BM};\beta_0\right]&\leq& C_1\left[k\left(1+\log\left(\frac{p}{k}\right)\right)\wedge n\right]\sigma^2\ ,\\
\mathcal{R}\left[\widetilde{\beta}^{LB};\beta_0\right]&\leq& C_2\left[k\left(1+\log\left(\frac{p}{k}\right)\right)\wedge n\right]\sigma^2\ .
 \end{eqnarray*}
\end{prte}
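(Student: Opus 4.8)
The plan is to derive, for each of the two estimators, an oracle-type inequality over the collection $\S_1$, and then to specialize it to two particular elements of $\S_1$: the support model $S_0:=\range(\X_{\supp(\beta_0)})$ and the full-rank model $\range(\X)$. Since $\X\beta_0\in S_0$ and $\X\beta_0\in\range(\X)$, both choices have zero approximation (bias) error, so the oracle inequality collapses to the penalty (resp.\ weight) term evaluated at these models. The two resulting bounds, $k[1+\log(p/k)]\sigma^2$ from $S_0$ and $n\sigma^2$ from the full-rank model, combine to the claimed $[k(1+\log(p/k))\wedge n]\sigma^2$ rate once we check that $S_0\in\S_1$ whenever $k\le k^*$ (which holds because $k\mapsto 2k[1+\log(p/k)]$ is nondecreasing on $[1,p]$, so $k\le k^*$ forces $2k[1+\log(p/k)]\le n$) and that the full-rank model is always available.

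For $\widetilde{\beta}^{BM}$ I would invoke the general penalized least-squares model selection theorem of Birg\'e and Massart~\cite{BM01}. Attaching to each $S\in\S_1$ of dimension $d$ a weight $L_S$ of order $\log(p/d)$, the key requirement is the Kraft-type summability condition $\sum_{S\in\S_1}e^{-L_S\dim(S)}\le\Sigma<\infty$; this holds because the number of $S\in\S_1$ with $\dim(S)=d$ is at most $\binom{p}{d}\le(ep/d)^d$, so with $L_S\asymp\log(p/d)$ the sum is geometrically controlled. The chosen penalty $4\dim(S)[4+\log(p/\dim(S))]\sigma^2$ dominates $\kappa\sigma^2(1+\sqrt{2L_S})^2\dim(S)$, and the theorem then yields $\E\|\X\widetilde{\beta}^{BM}-\X\beta_0\|_2^2\le C\inf_{S\in\S_1}\{\|(\mathrm{Id}-\Pi_S)\X\beta_0\|_2^2+\pen(S)\}+C'\Sigma\sigma^2$. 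Evaluating the infimum at $S_0$ and at the full-rank model gives the stated bound.

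For $\widetilde{\beta}^{LB}$ I would instead use the exact-oracle machinery of Leung and Barron~\cite{LB06}, which rests on Stein's unbiased risk estimate for the Gaussian model together with the fact that an exponentially weighted average of projection estimators, at a high enough temperature, has risk no larger than the best single projection estimator plus an entropy term. Concretely, with prior masses $\pi_S\propto[k^*\binom{p}{\dim S}]^{-1}$ (resp.\ $1$ for the full-rank model) and temperature $4\sigma^2$ matching the weights defining $\omega_S$, one obtains $\E\|\X\widetilde{\beta}^{LB}-\X\beta_0\|_2^2\le\inf_{S\in\S_1}\{\|(\mathrm{Id}-\Pi_S)\X\beta_0\|_2^2+2\dim(S)\sigma^2+4\sigma^2\log(1/\pi_S)\}$. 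Since $\log(1/\pi_S)\asymp\dim(S)\log(p/\dim(S))+\log k^*$, specializing again to $S_0$ and to the full-rank model reproduces the same $[k(1+\log(p/k))\wedge n]\sigma^2$ rate.

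The routine steps are the two specializations and the arithmetic bounding $\log\binom{p}{d}$ by $d\log(ep/d)$. The real work lies in the oracle inequalities themselves. For $\widetilde{\beta}^{BM}$ the crux is the uniform control of the Gaussian fluctuations $\langle\eps,\Pi_S\eps\rangle$ and $\langle\eps,\Pi_S\X\beta_0\rangle$ over the combinatorially huge collection $\S_1$; the chi-square deviation inequalities must be summed against $e^{-L_S\dim(S)}$, which is exactly why the penalty must exceed the $\log(p/d)$ entropy per dimension. For $\widetilde{\beta}^{LB}$ the delicate point is verifying the temperature condition under which the Stein/SURE computation closes into an exact oracle bound, which is where the precise constant $4\sigma^2$ in the exponent of $\omega_S$ enters. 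Both facts are established in~\cite{BM01,LB06}, so the proposition follows by instantiating those theorems with the present penalty and weights and optimizing over the two distinguished models.
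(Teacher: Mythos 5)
Your proposal is correct and matches the paper's approach: the paper's entire proof is the remark that ``these two risk bounds derive straightforwardly from the aforementioned work \cite{BM01,LB06}'', and you instantiate exactly those results --- the Birg\'e--Massart penalized model selection theorem and the Leung--Barron exponential-weighting oracle inequality --- over the collection $\S_1$, then specialize to the support model and the full-rank model. Your additional verifications (monotonicity of $k\mapsto 2k[1+\log(p/k)]$ so that $S_0\in\S_1$ when $k\le k^*$, the Kraft-type summability, and the fall-back on the full-rank model when $k>k^*$) are precisely the ``straightforward'' steps the paper leaves implicit.
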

These two risk bounds derive straightforwardly from the aforementioned work~\cite{BM01,LB06}.

\subsection{Unknown variance}\label{section_appendix_minimax_2}

For any set $S\in\S_2$, we set the following  measure of complexity $\Delta(S)$
\begin{eqnarray*}
\Delta(S)&=&\log\binom{p}{\dim(S)}+\log(\dim(S))\ ,
\end{eqnarray*}
and we take the same penalty term $\pen(S)$ as for \LinSelect\ (see Appendix \ref{appendix-linselect_details}).
Baraud et al. \cite{BGH09}  consider the model selection estimators $\widetilde{\beta}^{BGH}:= \widehat{\beta}^{\perp}_{\widehat{S}^{BGH}}$ with
\begin{eqnarray*}
 \widehat{S}^{BGH}:=\argmin_{S\in \S_2}\| Y-\Pi_S Y\|_2^2\left[1+ \frac{\pen(S)}{n-\dim(S)}\right].
\end{eqnarray*}
The first risk bound only covers the (non-ultra) high-dimensional setting.
\begin{prte}\label{prte_risque_bgh}
There exists some numerical constant $C$ such that the following holds.
For any design ${\bf X}$ and any vector $\beta_0$, we have
\begin{eqnarray*}
 \mathcal{R}\left[\widetilde{\beta}^{BGH};\beta_0\right]&\leq& C\hspace{-0.4cm}\inf_{\begin{scriptsize}\begin{array}{c}\beta\in\mathbb{R}^p\\ \ \|\beta\|_0\leq \frac{n}{2\log(p)}\end{array}\end{scriptsize}}
 \left\{ \|{\bf X}\left(\beta-\beta_0\right)\|_2^2  + \|\beta\|_0\left[1+\log\left(\frac{p}{\|\beta\|_0}\right)\right]\sigma^2\right\}.
\end{eqnarray*}
\end{prte}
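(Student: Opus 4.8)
The plan is to deduce the bound from the general oracle inequality for \LinSelect\ (Proposition~\ref{ORACLE-LINSELECT}), exploiting the remark already recorded in Section~\ref{generic_schemes.st} that, when \LinSelect\ is applied to the constrained least-squares estimators $\X\hbeta_{S}=\Pi_{S}Y$ with $\hS=\S_{2}$, it coincides with $\widetilde{\beta}^{BGH}$. Indeed, plugging $\X\hbeta_{S}=\Pi_{S}Y$ into Criterion~\eref{eq:linselect} and choosing the approximating space equal to $S$ itself kills the second term (since $\Pi_{S}\Pi_{S}Y=\Pi_{S}Y$) and turns the first term into $\norm{Y-\Pi_{S}Y}_{2}^{2}$ and $\widehat\sigma^{2}_{S}$ into $\norm{Y-\Pi_{S}Y}_{2}^{2}/(n-\dim(S))$, so that Criterion~\eref{eq:linselect} reduces exactly to $\norm{Y-\Pi_{S}Y}_{2}^{2}\cro{1+\pen(S)/(n-\dim(S))}$, i.e. to the criterion defining $\widehat{S}^{BGH}$. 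Specialising Proposition~\ref{ORACLE-LINSELECT} to this family and to the weight $\Delta(S)=\log\binom{p}{\dim(S)}+\log(\dim(S))$, for which $\pen(S)\asymp\dim(S)\cro{1+\log(p/\dim(S))}$, and choosing again the self-approximation $S'=S$ (so the approximation term vanishes), yields
\[
\mathcal{R}\cro{\widetilde{\beta}^{BGH};\beta_{0}}\leq C\,\E\cro{\inf_{S\in\S_{2}}\ac{\norm{\Pi_{S}Y-\X\beta_{0}}_{2}^{2}+\dim(S)\pa{1+\log\pa{p/\dim(S)}}\sigma^{2}}}.
\]

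It then remains to convert this expected, data-driven bound into the deterministic infimum of the statement. First I would use $\E[\inf_{S}\{\cdots\}]\leq\inf_{S}\E[\cdots]$, the outer infimum now ranging over a \emph{fixed} model $S$. For such an $S$, writing $Y=\X\beta_{0}+\eps$ and using that $\Pi_{S}$ is an orthogonal projector, the bias–variance decomposition gives $\E\norm{\Pi_{S}Y-\X\beta_{0}}_{2}^{2}=\norm{(I_{n}-\Pi_{S})\X\beta_{0}}_{2}^{2}+\dim(S)\sigma^{2}$, the cross term vanishing by centering and determinism of $(I_{n}-\Pi_{S})\X\beta_{0}$. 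The variance contribution $\dim(S)\sigma^{2}$ is absorbed into the penalty term at the price of enlarging $C$, so that the expectation above is bounded by $\inf_{S\in\S_{2}}\ac{\norm{(I_{n}-\Pi_{S})\X\beta_{0}}_{2}^{2}+C\,\dim(S)\pa{1+\log(p/\dim(S))}\sigma^{2}}$.

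Finally, for any target $\beta$ with $J=\supp(\beta)$ I would take $S=\range(\X_{J})$, which lies in $\S_{2}$ as soon as $\norm{\beta}_{0}\leq(n-1)/4$. Since $\X\beta\in S$, the projection is the best $S$-approximation, whence $\norm{(I_{n}-\Pi_{S})\X\beta_{0}}_{2}^{2}\leq\norm{\X(\beta-\beta_{0})}_{2}^{2}$, while $\dim(S)\leq\norm{\beta}_{0}$; as $x\mapsto x\cro{1+\log(p/x)}$ is nondecreasing on $(0,p]$, the penalty is at most $\norm{\beta}_{0}\cro{1+\log(p/\norm{\beta}_{0})}\sigma^{2}$. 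Taking the infimum over these $\beta$ and invoking $n\leq p$ (which forces $n/(2\log(p))\leq(n-1)/4$, so that the \emph{more restrictive} range $\norm{\beta}_{0}\leq n/(2\log(p))$ of the statement only enlarges the right-hand side) gives the claim. If we could not invoke Proposition~\ref{ORACLE-LINSELECT}, the genuine obstacle would be that oracle inequality itself, namely the uniform control of the $\chi^{2}$-type fluctuations over $\S_{2}$ through the $\DKhi$/$\EDKhi$ machinery; granting it, the only delicate point left is matching the cardinality constraint defining $\S_{2}$ with the sparsity range of the statement, which is exactly the role of the restriction $\norm{\beta}_{0}\leq n/(2\log(p))$, the rest being routine bias–variance bookkeeping.
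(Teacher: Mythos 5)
Your route is genuinely different from the paper's: the paper proves Proposition~\ref{prte_risque_bgh} in one line, as a direct citation of Corollary~1 in \cite{BGH09}, whereas you try to re-derive it from the \LinSelect\ oracle inequality (Proposition~\ref{ORACLE-LINSELECT}) combined with the asserted coincidence between \LinSelect\ and the BGH procedure for projection estimators. Unfortunately the reduction fails at exactly the step you take for granted. Proposition~\ref{ORACLE-LINSELECT} requires $\Delta(S)\leq 2n/3$ for \emph{every} $S$ in the collection, and this hypothesis is violated by $\S_2$ in precisely the regime the proposition is about. Indeed, whenever $\X$ has rank at least $(n-1)/4$ (the generic case), $\S_2$ contains models $S=\range(\X_{\J})$ with $\dim(S)$ of order $(n-1)/4$, for which $\Delta(S)\geq \log\binom{p}{\dim(S)}\geq \dim(S)\log\pa{p/\dim(S)}\approx \frac{n-1}{4}\log\pa{\frac{4p}{n-1}}$; this exceeds $2n/3$ as soon as $p$ exceeds a small constant multiple of $n$ (say $p\geq 4n$), and Appendix~\ref{section_appendix_minimax} works under $p\geq n$ with the interesting case being $p\gg n$. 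Note that the restriction $\|\beta\|_0\leq n/(2\log p)$ in the statement only limits the \emph{oracle infimum}; the estimator $\widetilde{\beta}^{BGH}$ itself minimizes over all of $\S_2$, including the high-complexity models, and your argument implicitly conflates these two things. The uniform complexity hypothesis is not cosmetic: the proof of Proposition~\ref{ORACLE-LINSELECT} uses it to invoke the bound $\pen(S)\leq C\cro{\dim(S)\vee\Delta(S)}$ (Proposition~4 of \cite{BGH09}), which is only guaranteed when $\dim(S)\vee\Delta(S)\leq \kappa n$ and is false beyond that range, where $\pen_{\Delta}(S)$ blows up much faster than linearly in $\Delta(S)$. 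To make your strategy rigorous you would have to formulate and prove a variant of Proposition~\ref{ORACLE-LINSELECT} in which the collection may contain arbitrarily complex models but the infimum is restricted to those with $\Delta(S)\lesssim n$ --- that is, essentially redo the $\DKhi$/$\EDKhi$ analysis of \cite{BGH09}, which is exactly the work the paper's citation outsources.

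A secondary, lesser issue: your justification of the coincidence between \LinSelect\ (with $\hS=\S_2$) and the BGH criterion only shows that Criterion~\eref{eq:linselect}, \emph{evaluated at the self-approximating space} $S'=S_{\lambda}$, reproduces $\|Y-\Pi_{S_\lambda}Y\|_2^2\cro{1+\pen(S_\lambda)/(n-\dim(S_\lambda))}$; it does not show that the infimum over $S'\in\hS$ is attained there, which is what identifying the two argmins requires. Since the paper itself asserts this coincidence at the end of Section~\ref{generic_schemes.st}, you may lean on it, but your computation alone does not establish it. By contrast, the downstream bookkeeping in your proposal --- the bias--variance identity $\E\|\Pi_SY-\X\beta_0\|_2^2=\|(I_n-\Pi_S)\X\beta_0\|_2^2+\dim(S)\sigma^2$, the choice $S=\range(\X_{\supp(\beta)})$, the monotonicity of $x\mapsto x\cro{1+\log(p/x)}$, and the comparison of $n/(2\log p)$ with $(n-1)/4$ --- is correct, but it all sits downstream of the invalid invocation.
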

Proposition \ref{prte_risque_bgh} is a straightforward consequence of Corollary 1 in \cite{BGH09}. It shows that simultaneous adaptation to the variance and the sparsity is possible if we restrict ourselves to a non-ultra high-dimensional setting. The next proposition complements the risk upper bound of Proposition \ref{prte_adaptation_impossible}. Consider $\widetilde{\beta}^{(n)}$ as a least-squares estimator of $\beta_0$ over $\mathbb{R}^n$.

\begin{prte}\label{prte_risque_bgh2}
There exist numerical constants $C$, $C_1$, and $C_2$ such that the following holds.
For any design ${\bf X}$, any $\sigma>0$, and any vector $\beta_0\in\mathbb{R}^p$, we have
\begin{eqnarray*}
 \mathcal{R}\left[\widetilde{\beta}^{(n)};\beta_0\right]&\leq& Cn\sigma^2.
\end{eqnarray*}
For any design ${\bf X}$, any  $\sigma>0$, any $k\in \{1,\ldots, (n-1)/4\}$ and any vector $\beta_0\in\mathbb{R}^p$ such that
  $\|\beta_0\|_0=k$,  we have
\begin{eqnarray*}
 \mathcal{R}\left[\widetilde{\beta}^{BGH};\beta_0\right]&\leq& C_1 k\log\left(\frac{p}{k}\right)\exp\left[C_2\frac{k}{n}\log\left(\frac{p}{k}\right)\right]\sigma^2.
\end{eqnarray*}
\end{prte}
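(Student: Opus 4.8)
\textbf{The bound for $\widetilde{\beta}^{(n)}$.} I would dispatch the first inequality by a direct projection computation, with no model-selection machinery. Since $\widetilde{\beta}^{(n)}$ is the unconstrained least-squares estimator, $\X\widetilde{\beta}^{(n)}=\Pi_{\range(\X)}Y$. Because $\X\beta_0\in\range(\X)$, the model $Y=\X\beta_0+\eps$ yields $\X\widetilde{\beta}^{(n)}-\X\beta_0=\Pi_{\range(\X)}\eps$, hence
\[
\mathcal{R}\cro{\widetilde{\beta}^{(n)};\beta_0}=\E\cro{\norm{\Pi_{\range(\X)}\eps}_2^2}=\mathrm{Rank}(\X)\,\sigma^2\leq n\sigma^2 ,
\]
which is the claim with $C=1$, uniformly over $\beta_0$ and $\sigma$.

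\textbf{Reduction for $\widetilde{\beta}^{BGH}$.} For the second inequality, the plan is to evaluate the oracle inequality for the penalized-log-likelihood estimator $\widehat{S}^{BGH}$ at the single \emph{oracle model} $S_0=\range(\X_{\supp(\beta_0)})$. This model lies in $\S_2$ since $\dim(S_0)\leq\norm{\beta_0}_0=k\leq(n-1)/4$, and it carries \emph{no} bias because $\X\beta_0\in S_0$, so $d(\X\beta_0,S_0)=0$. With $\Delta(S_0)=\log\binom{p}{\dim S_0}+\log(\dim S_0)$ and $\log\binom{p}{k}\asymp k\log(p/k)$, the penalty obeys $\pen(S_0)\asymp\Delta(S_0)\asymp k\log(p/k)$, so the oracle term $d(\X\beta_0,S_0)^2+\pen(S_0)\sigma^2$ is of order $k\log(p/k)\sigma^2$: this is the base factor of the announced bound. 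Note that Proposition~\ref{prte_risque_bgh} cannot be quoted directly here, since its clean, constant-free oracle form is valid only for $\norm{\beta}_0\leq n/(2\log p)$, whereas here $k$ may reach $(n-1)/4$.

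\textbf{Tracking the constant.} The crux is to show that, in this \emph{unknown-variance} problem, the oracle inequality carries a multiplicative constant of order $\exp\cro{C_2\,\pen(S_0)/n}$. The mechanism is that $\widehat{S}^{BGH}$ minimizes the \emph{multiplicative} criterion $\norm{Y-\Pi_SY}_2^2\,\pa{1+\pen(S)/(n-\dim S)}$; taking logarithms turns the penalty into an additive term $\approx\pen(S)/(n-\dim S)$, so comparing $\widehat{S}$ to $S_0$ and converting back to the quadratic risk produces factors $\exp\cro{\pen(S)/(n-\dim S)}$. As $\dim S\leq(n-1)/4$ forces $n-\dim S\geq 3n/4$, these are bounded by $\exp\cro{\tfrac{4}{3}\pen(S)/n}$, giving $\exp\cro{C_2\,\tfrac{k}{n}\log(p/k)}$ at $S=S_0$. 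Concretely I would write the selection inequality $\crit(\widehat S)\leq\crit(S_0)$ with $\norm{Y-\Pi_{S_0}Y}_2^2=\norm{\Pi_{S_0^\perp}\eps}_2^2\sim\sigma^2\chi^2_{n-\dim S_0}$, control uniformly over $S\in\S_2$ the deviations of $\norm{\Pi_S\eps}_2^2$ and of the estimated variances $\norm{Y-\Pi_SY}_2^2/(n-\dim S)$ through the penalty (the entropy $\log\binom{p}{\dim S}$ of the collection being absorbed by the calibration of $\pen$ via the $\EDKhi$ function), bound the excess risk on a high-probability event, and integrate the negligible remainder.

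\textbf{Main obstacle.} The delicate step is the uniform control of the \emph{ratio} of chi-square quantities over the exponentially large family $\S_2$, together with the proof that the induced degradation of the constant is \emph{no worse} than exponential in $\pen(S_0)/n$. In the ultra-high-dimensional regime $k\log(p/k)\gtrsim n$ the additive oracle bound collapses, and the whole difficulty is to show the blow-up is a controlled $\exp\cro{C_2\,k\log(p/k)/n}$ rather than a divergent $\pa{1-c\,k\log(p/k)/n}^{-1}$; this sharp exponential is precisely what matches the lower bound of Proposition~\ref{prte_adaptation_impossible}.
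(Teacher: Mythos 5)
Your first bound is correct and is exactly what the paper means by ``straightforward'': $\X\widetilde{\beta}^{(n)}=\Pi_{\range(\X)}Y$, so the risk is $\mathrm{Rank}(\X)\,\sigma^2\leq n\sigma^2$.

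For the second bound there is a genuine error, and it concerns precisely the regime this proposition exists for. You claim that $\pen(S_0)\asymp\Delta(S_0)\asymp k\log(p/k)$ and that the factor $\exp[C_2 (k/n)\log(p/k)]$ is then generated by the \emph{analysis} of the multiplicative criterion. But $\widetilde{\beta}^{BGH}$ uses the EDkhi-calibrated penalty $\pen(S)=1.1\,\pen_{\Delta}(S)$ of Appendix~\ref{appendix-linselect_details}, and the linear estimate $\pen_{\Delta}(S)\leq C_{\kappa}\big(\dim(S)\vee\Delta(S)\big)$ (Proposition~4 of \cite{BGH09}) is valid only when $\dim(S)\vee\Delta(S)\leq\kappa n$ with $\kappa<1$. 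For $k$ as large as $(n-1)/4$ one can have $\Delta(S_0)\approx\log\binom{p}{k}\gg n$, and there the defining equation $\E\big[\big(U-\tfrac{\pen_{\Delta}(S_0)}{n-k}V\big)_{+}\big]=e^{-\Delta(S_0)}$, with $U\sim\chi^2_{k+1}$ and $V\sim\chi^2_{n-k-1}$ independent, forces $\pen_{\Delta}(S_0)$ to be of order $(k+1)\exp[2\Delta(S_0)/(n-k-1)]$: the Fisher-type ratio $U/V$ has polynomial tails with exponent $(n-k-1)/2$, so reaching a level $e^{-\Delta}$ with $\Delta\gg n$ requires a threshold exponentially large in $\Delta/n$. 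In other words, the exponential factor in the statement \emph{is the penalty itself}; the multiplicative constant in the oracle inequality of \cite{BGH09} remains bounded. The proof that the paper compresses into ``derives from \cite{BGH09}'' is: the oracle theorem of \cite{BGH09} evaluated at $S_0$ (zero bias, dimension $k$), combined with the linear penalty bound when $\Delta(S_0)\leq\kappa n$ (essentially Proposition~\ref{prte_risque_bgh}) and with the exponential penalty bound when $\Delta(S_0)>\kappa n$.

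Your mechanism also cannot be repaired into an alternative proof, because an estimator built with a penalty that really is linear in $\Delta(S)$ does not satisfy the claimed bound. For designs with $p\gg n$ (which the proposition must cover, being stated for every $\X$), among the $e^{\Delta}$ models of dimension $D$ there exist spurious models with $\|Y-\Pi_S Y\|_2^2\lesssim \|Y\|_2^2\, e^{-c\Delta(S)/(n-D)}$; a factor $1+c\,\Delta(S)/(n-D)$ cannot offset an exponentially small residual, so the criterion picks such a model and the risk at a sparse $\beta_0$ is then of order $n\sigma^2$. Since the claimed bound forces a risk much smaller than $n\sigma^2$ whenever $k\log(p/k)\ll n$, the statement is false for the linear-penalty estimator. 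This is exactly the dichotomy of Proposition~\ref{prte_adaptation_impossible}: a linear penalty lands you in the first branch (risk of order $n\sigma^2$ at $\beta_0=0_p$), while only the exponentially inflated EDkhi penalty lands in the second branch, with the blow-up entering the risk bound through $\pen_{\Delta}(S_0)$ and not through the constant in front of the oracle term.
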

The first bound is straightforward while the second bound derives from \cite{BGH09}.

\section{Complements on LinSelect}\label{appendix-linselect}
\subsection{More details  on the selection procedure}\label{appendix-linselect_details}
The penalty $\pen(S)$  involved in the 
\LinSelect\ criterion~(\ref{eq:linselect}) is defined by
$\pen(S)=1.1\,\pen_{\Delta}(S)$
where $\pen_{\Delta}(S)$
is the unique solution of 
$$ \E\cro{\pa{U-{\pen_{\Delta}(S)\over n-\dim(S)}V}_{+}}=e^{-\Delta(S)} $$
where
$U$ and $V$ are two independent chi-square random variables with $\dim(S)+1$ and $n-\dim(S)-1$ degrees of freedom respectively. It is also the solution in $x$ of 
\begin{eqnarray*}
\lefteqn{e^{-\Delta(S)}=}\\
&&(D+1)\P\pa{F_{D+3,N-1}\geq x{N-1\over N(D+3)}}-x{N-1\over N}\P\pa{F_{D+1,N+1}\geq x{N+1\over N(D+1)}}
\end{eqnarray*}
where  $D=\dim(S)$, $N=n-\dim(S)$ and $F_{d,r}$ is a Fisher random variable with $d$ and $r$ degrees of freedom. 
\smallskip

Proposition 4 in~\cite{BGH09} ensures the following upper-bound on $\pen_{\Delta}(S)$. For any $0<\kappa<1$, there exists a constant $C_{\kappa}>1$ such that for any $S\in\S$ fulfilling $1\leq \dim(S)\vee \Delta(S)\leq \kappa n$ we have
$$\pen_{\Delta}(S)\leq C_{\kappa} \big(\dim(S)\vee\Delta(S)\big).$$
Conversely, Lemma~\ref{lemma_minoration_penalite} in Appendix~\ref{proof:ORACLE-LINSELECT} ensures that $\pen_{\Delta}(S)\geq 2\Delta(S)+\dim(S)-C$ for some constant $C\geq 0$.

\subsection{A general risk bound for LinSelect}
We set 
\begin{equation}\label{eq:sum}
\Sigma=\sigma^2\sum_{S\in\S}e^{-\Delta(S)}.
\end{equation}
The following proposition gives a risk bound when selecting $\widehat\lambda$ by minimizing (\ref{eq:linselect}).
\begin{prte}\label{ORACLE-LINSELECT}
Assume that $1\leq\dim(S)\leq n/2-1$ and $\Delta(S)\leq 2n/3$ for all $S\in\S$. 
Then, there exists a constant $C>1$ such that  for any minimizer $\widehat{\lambda}$ of the Criterion~\eref{eq:linselect}, we have
\begin{eqnarray*}\label{eq:risque_esp}
\lefteqn{C^{-1}\mathcal{R}\left[\widehat{\beta}_{\widehat{\lambda}};\beta_0\right]}\\
& \hspace{-0.4cm}\leq &
\hspace{-0.3cm}\E\cro{\inf_{\lambda\in\Lambda} \ac{\|{\bf X}\hbeta_{\lambda}-{\bf X}\beta_{0}\|_2^2+
 \inf_{S\in\widehat \S}\ac{\|\X\hbeta_{\lambda}-\Pi_{S}\X\hbeta_{\lambda}\|^2_{2}+[\Delta(S)\vee \dim(S)]\sigma^2}}}+\Sigma.
\end{eqnarray*}
Furthermore, with probability larger than $1-e^{-C_0n}-C_1\sum_{S\in\mathbb{S}}e^{-C_2[\Delta(S)\wedge n]}e^{-\Delta(S)}$, we have for some $C>1$
\begin{eqnarray*}\label{eq:risque_proba}
C^{-1}\lefteqn{\norm{{\bf X}\beta_{0}-{\bf X}\widehat \beta_{\widehat \lambda}}^{2}_{2}}\nonumber\\ 
&\le&  \inf_{\lambda\in\Lambda} \ac{\|{\bf X}\hbeta_{\lambda}-{\bf X}\beta_{0}\|_2^2+
 \inf_{S\in\widehat \S}\ac{\|\X\hbeta_{\lambda}-\Pi_{S}\X\hbeta_{\lambda}\|^2_{2}+[\Delta(S)\vee \dim(S)]\sigma^2}}.
\end{eqnarray*}
\end{prte}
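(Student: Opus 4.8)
The plan is to read Proposition~\ref{ORACLE-LINSELECT} as an oracle inequality for the penalized criterion \eref{eq:linselect}, following the strategy of Baraud et al.~\cite{linselect,BGH09}. I would first establish the high-probability bound (the second display), which is the genuine core of the statement, and then deduce the bound in expectation (the first display) by integrating the tail and absorbing the failure probabilities into the residual term $\Sigma$ of \eref{eq:sum}. Throughout I write $f_\lambda=\X\hbeta_\lambda$ and $f_0=\X\beta_0$, so that the object to control is $\norm{f_{\widehat\lambda}-f_0}_2^2$, and I keep in mind that $\hS\subset\S$ is data dependent, which will force all fluctuation bounds to be uniform over the whole collection $\S$.

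First I would exploit the definition of $\widehat\lambda$. Letting $\widehat S\in\hS$ achieve the inner infimum in \eref{eq:linselect} at $\lambda=\widehat\lambda$, the minimizing property gives, for every competitor $\lambda\in\Lambda$ and every $S\in\hS$,
\[
\norm{Y-\Pi_{\widehat S}f_{\widehat\lambda}}_2^2+\tfrac12\norm{f_{\widehat\lambda}-\Pi_{\widehat S}f_{\widehat\lambda}}_2^2+\pen(\widehat S)\,\widehat\sigma_{\widehat S}^2\le \norm{Y-\Pi_S f_\lambda}_2^2+\tfrac12\norm{f_\lambda-\Pi_S f_\lambda}_2^2+\pen(S)\,\widehat\sigma_S^2.
\]
Substituting $Y=f_0+\eps$ and using the Pythagorean identity relative to each $S$, I would expand the squared norms so that $\norm{f_{\widehat\lambda}-f_0}_2^2$ emerges on the left and the oracle quantity $\norm{f_\lambda-f_0}_2^2+\norm{f_\lambda-\Pi_S f_\lambda}_2^2$ emerges on the right. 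The role of the factor $1/2$ in front of the approximation term is precisely to leave slack to absorb the cross terms $\langle\eps,\Pi_S(\cdot)\rangle$ and $\langle f_0-f_\lambda,\cdot\rangle$ produced by this expansion; the surviving remainder consists of fluctuations that are linear and quadratic in $\eps$.

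The hard part is to control these fluctuations uniformly over $\S$. The linear-in-$\eps$ terms are Gaussian and are dispatched by Cauchy--Schwarz together with $2ab\le\eta a^2+\eta^{-1}b^2$, so that a small multiple of $\norm{f_{\widehat\lambda}-f_0}_2^2$ can be moved to the left-hand side. The delicate terms are the quadratic ones, $\norm{\Pi_S\eps}_2^2$ and the deviation of $\widehat\sigma_S^2=\norm{Y-\Pi_S Y}_2^2/(n-\dim(S))$ from $\sigma^2$ (whose bias $\norm{(I-\Pi_S)f_0}_2^2/(n-\dim(S))$ only enlarges the penalty and thus helps). Here I would invoke the calibration recalled in Appendix~\ref{appendix-linselect_details}: since $\pen(S)=1.1\,\pen_\Delta(S)$ with $\pen_\Delta(S)$ solving $\E\cro{\pa{U-\frac{\pen_\Delta(S)}{n-\dim(S)}V}_+}=e^{-\Delta(S)}$ for independent chi-square variables $U,V$, the event that the Fisher-type comparison between $\norm{\Pi_S\eps}_2^2$ and $(n-\dim(S))\widehat\sigma_S^2$ exceeds its calibrated level has probability of order $e^{-C_2[\Delta(S)\wedge n]}e^{-\Delta(S)}$. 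Combining this with the bounds $\pen_\Delta(S)\le C_\kappa(\dim(S)\vee\Delta(S))$ and $\pen_\Delta(S)\ge 2\Delta(S)+\dim(S)-C$ (Lemma~\ref{lemma_minoration_penalite}), a union bound over $S\in\S$ shows that, outside an event of probability at most $e^{-C_0 n}+C_1\sum_{S\in\S}e^{-C_2[\Delta(S)\wedge n]}e^{-\Delta(S)}$, the penalty $\pen(S)\,\widehat\sigma_S^2$ dominates every quadratic fluctuation up to the margin $[\Delta(S)\vee\dim(S)]\sigma^2$, simultaneously for all $S$. The hypotheses $\dim(S)\le n/2-1$ and $\Delta(S)\le 2n/3$ keep $n-\dim(S)$ a positive fraction of $n$ and guarantee the concentration of $\widehat\sigma_S^2$. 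This uniform chi-square deviation step is the main technical obstacle; once it is in place, rearranging the deterministic inequality on the good event and taking the infimum over $\lambda$ and $S\in\hS$ yields the probability bound with a multiplicative constant $C>1$.

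Finally I would pass to the bound in expectation. On the good event the loss is bounded by the oracle right-hand side; on its complement I would use a crude deterministic control of $\norm{f_{\widehat\lambda}-f_0}_2^2$ extracted from the same comparison inequality (controlling it by a multiple of $\norm{\eps}_2^2$ and a dimensional factor), and then exploit the exponential smallness of the failure probability. Integrating the tail as in~\cite{linselect}, the product of these loss factors with the probabilities $e^{-C_2[\Delta(S)\wedge n]}e^{-\Delta(S)}$ sums to a quantity of order $\sigma^2\sum_{S\in\S}e^{-\Delta(S)}$, while the $e^{-C_0n}$ term is negligible. This produces exactly the additive residual $\Sigma$, and taking the infimum over $\lambda$ inside the expectation gives the first display.
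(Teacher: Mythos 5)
Your plan for the second display (the high-probability bound) is essentially the paper's own argument: the comparison inequality coming from the minimizing property of $(\widehat\lambda,S_*)$, absorption of the cross terms using the slack left by the factor $1/2$, domination of the noise projections by the calibrated penalty via Fisher-type deviation bounds (Lemmas~\ref{Lemma_minoration1_penalite}, \ref{Lemma_minoration2_penalite} and \ref{lemma_minoration_penalite}), and a union bound over $\S$, the $e^{-C_0n}$ term coming from $\chi^2$ concentration of $\norm{\eps}_2^2$. One technical point you gloss over: the cross terms must be handled by projecting the noise onto $\overline S=\mathrm{span}(S\cup\{\X\beta_0\})$, not onto $S$ (plain Cauchy--Schwarz with the raw $\eps$ gives the useless factor $\norm{\eps}_2$), and it is for $\overline S$ that $\norm{Y-\Pi_{\overline S}Y}_2^2$ is bias-free, independent of $\norm{\Pi_{\overline S}\eps}_2^2$, and stochastically larger than a $\chi^2_{n-\dim(S)-1}$ --- exactly what the calibration of $\pen_{\Delta}$ requires.

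The genuine gap is in your derivation of the first display. The paper does \emph{not} obtain the expectation bound by integrating the tail of the probability bound; both displays are consequences of a single pathwise inequality whose remainder is $\tilde\Sigma+\pa{\norm{\eps}_2^2-2n\sigma^2}_+$, with $\tilde\Sigma=2\sum_{S\in\S}\pa{K\norm{\Pi_{\overline S}\eps}_2^2-\frac{\pen(S)}{n-\dim(S)}\norm{Y-\Pi_{\overline S}Y}_2^2}_+$. The additive $\Sigma$ then comes from the exact calibration identity $\E\cro{\pa{U-\frac{\pen_\Delta(S)}{n-\dim(S)}V}_+}=e^{-\Delta(S)}$: with $\pen=1.1\,\pen_\Delta$ and $K=1.1$, independence and stochastic comparison give $\E[\tilde\Sigma]\leq 2.2\,\Sigma$ directly. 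Your mechanism --- multiply a ``crude'' control of the loss on the bad event by the failure probabilities --- does not go through. The crude control is random (a multiple of $\norm{\eps}_2^2$, and in fact of $\tilde\Sigma$, a sum over all of $\S$) and correlated with the bad event, so you must invoke Cauchy--Schwarz, producing a term of order $n\sigma^2\sqrt{\P[\mathrm{bad}]}$; since $\sqrt{\P[\mathrm{bad}]}$ is only bounded by something like $\big(\sum_{S}e^{-(1+C_2)\Delta(S)}\big)^{1/2}$, this is in general far larger than $\Sigma/\sigma^2=\sum_S e^{-\Delta(S)}$ --- e.g.\ with a single model of complexity $\Delta(S)=O(1)$ your residual is of order $n\sigma^2$ while $\Sigma$ is of order $\sigma^2$. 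More fundamentally, a bound at one fixed failure level cannot be ``integrated''; you would need deviation bounds at every level $t$, which your sketch never establishes. The clean fix is the one you already have all the ingredients for: apply the calibration identity to bound $\E[\tilde\Sigma]$ itself, rather than routing through the probability statement.
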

The first part of Proposition~\ref{ORACLE-LINSELECT} is a slight variation of  Theorem 1 in~\cite{linselect}. We refer to  the Appendix~\ref{proof:risque-esp} for a sketch of the proof of this result. The second part is proved in  Appendix~\ref{proof:risque-proba}.



\section{Proof of Proposition~\ref{ORACLE-LINSELECT}}\label{proof:ORACLE-LINSELECT}
\subsection{Proof of the first part of Proposition \ref{ORACLE-LINSELECT}}\label{proof:risque-esp}
In this section $C$ denotes a constant whose value may vary from line to line.
We also use in this section the notations $\|.\|$ for $\|.\|_{2}$, 
$f_{0}=\X\beta_{0}$ and $\hf_{\lambda}=\X\widehat\beta_{\lambda}$. Finally, for any $S\in\S$, we write $\overline S$ for the linear space generated by $S$ and $f_{0}$.
Let $(\widehat \lambda,S_{*})$ be any minimizer over $\Lambda\times\hS$ of
\[
\crit(\lambda,S)=\norm{Y-\Pi_{S}\widehat f_{\lambda}}^{2}+{1\over 2}\norm{\widehat f_{\lambda}-\Pi_{S}\widehat f_{\lambda}}^{2}+\pen(S)\widehat{\sigma}^2_{S}.
\]
 From $\crit(\widehat\lambda,S_{*})\leq\crit(\lambda,S)$ and simple algebra, we get for any $K>1$, $\lambda\in\widehat{\Lambda}$ and $S\in\hS$
\begin{eqnarray*}
\lefteqn{\norm{f_{0}-\Pi_{S_{*}}\widehat f_{\widehat \lambda}}^{2}+{1\over 2}\norm{\widehat f_{\widehat \lambda}-\Pi_{S_{*}}\widehat f_{\widehat \lambda}}^{2}}\\
&\le& \norm{f_{0}-\Pi_{S}\widehat f_{\lambda}}^{2}+{1\over 2}\norm{\widehat f_{\lambda}-\Pi_{S}\widehat f_{\lambda}}^{2}+ 2\pen(S)\widehat{\sigma}^2_{S} \\
&&\ +\ 2\<\eps,\Pi_{S_{*}}\widehat f_{\widehat \lambda}-f_{0}\>-\pen(S_{*})\widehat{\sigma}^2_{S_{*}}\ +\ 2\<\eps, f_{0}-\Pi_{S}\widehat f_{ \lambda}\>-\pen(S)\widehat{\sigma}^2_{S}.
\\
&\le& \norm{f_{0}-\Pi_{S}\widehat f_{\lambda}}^{2}+{1\over 2}\norm{\widehat f_{\lambda}-\Pi_{S}\widehat f_{\lambda}}^{2}+2\pen(S)\widehat{\sigma}^2_{S}\\
&& +\ K^{-1}\norm{f_{0}-\Pi_{S_{*}}\widehat f_{\widehat \lambda}}^{2}+K\norm{\Pi_{\bar S_{*}}\eps}^{2}-\pen(S_{*})\widehat{\sigma}^2_{S_{*}}\\
&& +\ K^{-1}\norm{f_{0}-\Pi_{S}\widehat f_{\lambda}}^{2}+K\norm{\Pi_{\bar S}\eps}^{2}-\pen(S)\widehat{\sigma}^2_{S},
\end{eqnarray*}
the second inequality following from $2\<f,g\> \leq K^{-1} \norm{f}^{2} + K \norm{g}^{2}$.
Introducing the notation 
\[
\tilde  \Sigma=2\sum_{S\in\S}\pa{K\norm{\Pi_{\overline S}\eps}^{2}-{\pen(S)\over n-\dim(S)}\norm{Y-\Pi_{\overline S}Y}^{2}}_{+},
\]
we can reformulate the above bound as
\begin{eqnarray}
\lefteqn{\pa{{2}+{1\over 1-K^{-1}}}^{-1}\norm{f_{0}-\widehat f_{\widehat \lambda}}^{2}}\nonumber &&\\ &\le& (1-K^{-1})\norm{f_{0}-\Pi_{S_{*}}\widehat f_{\widehat \lambda}}^{2}+{1\over 2}\norm{\widehat f_{\widehat \lambda}-\Pi_{S_{*}}\widehat f_{\widehat \lambda}}^{2}\nonumber\\
&\le& (1+K^{-1})\norm{f_{0}-\Pi_{S}\widehat f_{\lambda}}^{2}+{1\over 2}\norm{\widehat f_{\lambda}-\Pi_{S}\widehat f_{\lambda}}^{2}+2\pen(S)\widehat{\sigma}^2_{S}+\tilde  \Sigma. \nonumber\\ \label{eq:intermediaire}
\end{eqnarray}
For any $S\in\hS$ we have $\dim(S)\le n/2-1$ and $\Delta(S)\leq 2n/3$. Therefore, according to Proposition~4 in~\cite{BGH09} we have
$\pen(S)\leq C [\dim(S)\vee \Delta(S)]$ and then
\begin{eqnarray*}
\pen(S)\widehat\sigma^2_{S}&=&{\pen(S)\over n-\dim(S)}\|Y-\Pi_{S}Y\|^2
\ \leq\ {\pen(S)\over n-\dim(S)}\|Y-\Pi_{S}\hf_{\lambda}\|^2\\
&\hspace{-2.3cm}\leq&\hspace{-1.3cm} 3 {\pen(S)\over n-\dim(S)}\pa{\|\eps\|^2+\|f_{0}-\hf_{\lambda}\|^2+\|\hf_{\lambda}-\Pi_{S}\hf_{\lambda}\|^2}\\
&\hspace{-2.3cm}\leq&\hspace{-1.3cm} C \pa{[\dim(S)\vee \Delta(S)]\sigma^2+\pa{\|\eps\|^2-2n\sigma^2}_{+}+\|f_{0}-\hf_{\lambda}\|^2+\|\hf_{\lambda}-\Pi_{S}\hf_{\lambda}\|^2},
\end{eqnarray*}
where $C$ is a positive constant.
Combining this bound with (\ref{eq:intermediaire}) and 
$$(1+K^{-1})\norm{f_{0}-\Pi_{S}\widehat f_{\lambda}}^{2}+{1\over 2}\norm{\widehat f_{\lambda}-\Pi_{S}\widehat f_{\lambda}}^{2} \le 
4\norm{f_{0}-\widehat f_{\lambda}}^{2}+5\norm{\widehat f_{\lambda}-\Pi_{S}\widehat f_{\lambda}}^{2}$$
we finally obtain that for any $\lambda\in\Lambda$ and $S\in\hS$
\begin{equation}\label{eq:corollaire_generale}
C^{-1}\norm{f_{0}-\widehat f_{\widehat \lambda}}^{2} \le \|f_{0}-\hf_{\lambda}\|^2+\|\hf_{\lambda}-\Pi_{S}\hf_{\lambda}\|^2+[\dim(S)\vee \Delta(S)]\sigma^2+\tilde\Sigma+\pa{\|\eps\|^2-2n\sigma^2}_{+} 
\end{equation}
for some positive constant $C$ depending on $K$ only. 
Finally, choosing $K=1.1$, we deduce the upper bound 
$$\E\cro{\tilde\Sigma+\pa{\|\eps\|^2-2n\sigma^2}_{+}}\leq 2\Sigma+3 \sigma^2,\quad \textrm{ (with $\Sigma$ defined in~(\ref{eq:sum}))}$$
 from the definition of $\pen_{\Delta}(S)$ and the fact that $\norm{Y-\Pi_{\overline S}Y}^{2}$ is independent of $\norm{\Pi_{\overline S}\eps}^{2}$ and is stochastically larger than $\norm{\eps-\Pi_{\overline S}\eps}^{2}$.
The bound~(\ref{eq:risque_esp}) follows.

\subsection{Proof of the second part of Proposition \ref{ORACLE-LINSELECT}}\label{proof:risque-proba}
We use the same notation as in Section~\ref{proof:risque-esp}. 
By (\ref{eq:corollaire_generale}), we have
\begin{eqnarray*}
C^{-1}\norm{f_{0}-\widehat f_{\widehat \lambda}}^{2} &\hspace{-0.2cm}\le&\hspace{-0.1cm}
\inf_{\lambda\in\Lambda}\ac{ \|f_{0}-\hf_{\lambda}\|^2+ \inf_{S\in\hS}\ac{\|\hf_{\lambda}-\Pi_{S}\hf_{\lambda}\|^2+[\dim(S)\vee \Delta(S)]\sigma^2}} \\
&& +\tilde\Sigma+\pa{\|\eps\|^2-2n\sigma^2}_{+} 
\end{eqnarray*}
for some positive constant $C$ depending on $K$ only. 
Setting $K=1.02$, we shall prove that with overwhelming probability $(\|\epsilon\|^2-2n\sigma^2)_{+}$ and 
\begin{equation*}
\tilde{\Sigma}:= 2\sum_{S\in\S}\left(1.02\|\Pi_{\overline{S}}\epsilon\|^2-\frac{\pen(S)}{n-\dim(S)}\|Y-\Pi_{\overline{S}}(Y)\|^2\right)_+\end{equation*}
are non positive. Applying a classical deviation inequality for $\chi^2$ random variables (Lemma 1 in \cite{Laurent00}), we derive that $\mathbb{P}\left[\|\epsilon\|^2\geq 2n\sigma^2\right]\leq e^{-n/16}$. Let us turn to $\tilde{\Sigma}$. The random variable ${(n-\dim(S)-1)\|\Pi_{\overline{S}}\epsilon\|^2}/{\|Y-\Pi_{\overline{S}}(Y)\|^2}$ is stochastically smaller than a variable $F_S$ such that $F_S/(\dim(S)+1)$ follows a Fisher distribution with $\dim(S)+1$ and $n-\dim(S)-1$ degrees of freedom. As a consequence, we have 
\begin{eqnarray}\label{eq_majoration_principale_borne_proba}
 \P\left[\tilde{\Sigma}>0 \right]\leq \sum_{S\in\mathbb{S}}\P\left[F_{S}\geq \frac{1.1}{1.02}\frac{n-\dim(S)-1}{n-\dim(S)}\pen_{\Delta}(S)\right].
\end{eqnarray}
 In order to upper bound the right hand-side of~(\ref{eq_majoration_principale_borne_proba}), we control the penalty terms $\pen_\Delta(S)$.
We have
\begin{equation*}
\E\cro{\pa{U-\frac{n-\dim(S)}{n-\dim(S)-1}\pen_{\Delta}(S)W}_{+}}=e^{-\Delta(S)}\ ,
\end{equation*}
where $U$ and $(n-\dim(S)-1)W$ are two independent $\chi^{2}$ random variables
with respectively $\dim(S)+1$ and $n-\dim(S)-1$ degrees of freedom. We prove in the next sections the three following technical lemmas.
\begin{lemma}\label{Lemma_minoration1_penalite}
Let $F=U/W$ and $0<\alpha<1$. We have
$$\P\pa{F\geq {1\over 1-\alpha} \frac{n-\dim(S)-1}{n-\dim(S)}\pen_{\Delta}(S)}\leq {e^{-\Delta(S)}\over \alpha (\dim(S)+1)}\ .$$
\end{lemma}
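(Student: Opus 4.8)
The plan is to bound the tail probability $\P(F\geq\theta)$, with $\theta:=\frac{1}{1-\alpha}\frac{n-\dim(S)-1}{n-\dim(S)}\pen_{\Delta}(S)$, directly against the quantity $e^{-\Delta(S)}=\E[(U-gW)_+]$ that \emph{defines} the penalty, where $g$ denotes the coefficient of $W$ in that defining expectation. Write $D:=\dim(S)$, so that $U\sim\chi^2_{D+1}$ and $(n-D-1)W\sim\chi^2_{n-D-1}$ are independent and $\E[W]=1$. The single algebraic fact driving the argument is that the threshold $\theta$ and the coefficient $g$ carry the \emph{same} degrees-of-freedom factor $\frac{n-D-1}{n-D}$, i.e. $g=\frac{n-D-1}{n-D}\pen_{\Delta}(S)$, so that $\theta=g/(1-\alpha)$ and hence $1-g/\theta=\alpha$ exactly.

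First I would restrict the defining expectation to the event of interest. Since $\{F\geq\theta\}=\{U\geq\theta W\}$ and $(U-gW)_+\geq(U-gW)\1_{U\geq\theta W}$, this gives
\[
e^{-\Delta(S)}=\E\cro{(U-gW)_+}\geq\E\cro{(U-gW)\1_{U\geq\theta W}}.
\]
On the event $\{U\geq\theta W\}$ one has $W\leq U/\theta$, whence $U-gW\geq(1-g/\theta)U=\alpha U\geq0$; therefore the right-hand side is at least $\alpha\,\E[U\1_{U\geq\theta W}]$, and the crucial cancellation $1-g/\theta=\alpha$ enters here.

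The factor $\dim(S)+1$ then comes from the chi-square first-moment identity $u\,f_{D+1}(u)=(D+1)f_{D+3}(u)$ for the $\chi^2_{D+1}$ density, which yields $\E[U\1_{U\geq y}]=(D+1)\P(\chi^2_{D+3}\geq y)$ for every $y\geq0$. Conditioning on $W$ (legitimate by independence of $U$ and $W$) and using the stochastic domination $\chi^2_{D+3}\succeq\chi^2_{D+1}$, I would obtain $\E[U\1_{U\geq\theta W}]\geq(D+1)\P(U\geq\theta W)=(D+1)\P(F\geq\theta)$. Chaining the three steps gives $e^{-\Delta(S)}\geq\alpha(\dim(S)+1)\P(F\geq\theta)$, which is the announced inequality after dividing by $\alpha(\dim(S)+1)$.

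The only genuinely delicate point is the degrees-of-freedom bookkeeping in the first step: one must verify that the factor $\frac{n-\dim(S)-1}{n-\dim(S)}$ appearing in the threshold is \emph{exactly} the factor multiplying $\pen_{\Delta}(S)$ in the expectation defining the penalty, so that $1-g/\theta$ collapses to $\alpha$ rather than to some nearby constant (a misreading of this factor would leave a spurious $\big(\tfrac{n-\dim(S)}{n-\dim(S)-1}\big)^2$ and weaken the constant). Using the equivalent Fisher formulation of $\pen_{\Delta}(S)$ recalled in Appendix~\ref{appendix-linselect_details}, this reduces to reading off the coefficient of the $\chi^2_{n-\dim(S)-1}$ variable correctly; once that is pinned down, the truncation inequality, the moment identity, and the domination are all routine.
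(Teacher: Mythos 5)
Your proof is correct, and it reaches the stated bound by a route that differs from the paper's in one key step. Both arguments share the same skeleton: lower bound the defining quantity $e^{-\Delta(S)}=\E\cro{(U-gW)_{+}}$, with $g=\frac{n-\dim(S)-1}{n-\dim(S)}\pen_{\Delta}(S)$, by $\alpha\,(\dim(S)+1)\,\P(F\geq\theta)$ where $\theta=g/(1-\alpha)$. The paper does this by factoring $(U-gW)_{+}=U\pa{1-gW/U}_{+}$ and, since $u\mapsto u$ and $u\mapsto(1-gw/u)_{+}$ are both nondecreasing, applying a correlation (Chebyshev/Harris association) inequality conditionally on $W$ to pull out $\E[U]=\dim(S)+1$, then finishing with Markov's inequality on $(1-g/F)_{+}$ and the identity $\ac{1-g/F\geq\alpha}=\ac{F\geq\theta}$. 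You instead truncate first to the event $\ac{U\geq\theta W}$, linearize $U-gW\geq\alpha U$ there (this is where your exact cancellation $1-g/\theta=\alpha$ enters), and then prove the needed bound $\E\cro{U\1_{U\geq\theta W}}\geq \E[U]\,\P(U\geq\theta W)$ not by association but by the chi-square first-moment identity $u\,f_{k}(u)=k\,f_{k+2}(u)$ (with $k=\dim(S)+1$ and $f_{k}$ the $\chi^2_{k}$ density) combined with the stochastic domination of $\chi^2_{\dim(S)+1}$ by $\chi^2_{\dim(S)+3}$. So the two proofs use the same three ingredients in a different order; the paper's correlation step is shorter and would work for any nonnegative integrable $U$, while your argument is more self-contained --- it avoids invoking an association inequality --- at the price of exploiting the specific chi-square structure. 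Finally, your insistence on pinning down the coefficient $g$ is warranted: substituting $V=(n-\dim(S)-1)W$ into the definition of $\pen_{\Delta}(S)$ in Appendix~\ref{appendix-linselect_details} gives exactly your $g$, which is what the lemma statement and the paper's own proof use (and what the Jensen step in the proof of Lemma~\ref{lemma_minoration_penalite} confirms), even though the display introducing $W$ at the start of Appendix~\ref{proof:risque-proba} shows the reciprocal factor $\frac{n-\dim(S)}{n-\dim(S)-1}$ --- an internal typo that you were right not to copy.
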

\medskip

\begin{lemma}\label{Lemma_minoration2_penalite}
Assume that $\dim(S)\leq n/2-1$. 
For any $u>1$ and for any $x\geq 0$, we have
\begin{eqnarray*}
 \P\pa{F\geq ux}\leq \exp\left[-\frac{u-1}{12u}\left\{(x-\dim(S)-1)\wedge n\right\}\right]\P\pa{F\geq x}\ .
\end{eqnarray*}
\end{lemma}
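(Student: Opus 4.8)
The plan is to treat $\P(F\geq t)$ as a function of the threshold $t$ and to show that its logarithmic decay over $[x,ux]$ is at least $\tfrac{u-1}{12u}\{(x-\dim(S)-1)\wedge n\}$. Since $F=U/W$ with $U\sim\chi^2_{\dim(S)+1}$ and $(n-\dim(S)-1)W\sim\chi^2_{n-\dim(S)-1}$ independent, conditioning on $W$ gives the representation $\P(F\geq t)=\E\cro{\overline{G}(tW)}$, where $\overline{G}$ is the survival function of $\chi^2_{\dim(S)+1}$. The whole argument is then driven by the decay of $\overline{G}$. Note that when $x\leq \dim(S)+1$ the target exponent is nonnegative and the inequality is trivial, since $ux\geq x$ already forces $\P(F\geq ux)\leq\P(F\geq x)$; so I only need to treat $x>\dim(S)+1$.

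First I would record the one-dimensional chi-square tail ratio: for $u>1$ and $s>0$,
\[\overline{G}(us)\ \leq\ \exp\pa{-\frac{(u-1)s}{2}+\frac{\dim(S)-1}{2}\log u}\,\overline{G}(s).\]
This follows from a lower bound on the hazard rate of $\chi^2_{\dim(S)+1}$. Writing $g$ for its density and integrating by parts in $\overline{G}(s)=\int_s^\infty g(t)\,dt$, the identity $g'(t)/g(t)=\tfrac{\dim(S)-1}{2t}-\tfrac12$ together with the monotonicity of $t\mapsto\tfrac{\dim(S)-1}{2t}$ yields $g(s)\geq \tfrac{s-(\dim(S)-1)}{2s}\,\overline{G}(s)$; integrating $g/\overline{G}$ over $[s,us]$ gives the displayed bound.

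Next I would insert this into $\P(F\geq t)=\E[\overline{G}(tW)]$ and study the resulting average over $W$. A convenient computation is a Gamma tilt: absorbing the factor $e^{-(u-1)xW/2}$ produced by the tail ratio into the $\chi^2_{n-\dim(S)-1}$ law of $(n-\dim(S)-1)W$ merely rescales its rate and gives
\[\P(F\geq ux)\ \leq\ u^{(\dim(S)-1)/2}\,\rho^{-(n-\dim(S)-1)/2}\,\P\pa{F\geq x/\rho},\qquad \rho=1+\frac{(u-1)x}{n-\dim(S)-1}.\]
The information this encodes is that the logarithmic decay rate $h_F(t)=-\tfrac{d}{dt}\log\P(F\geq t)$ behaves like $\tfrac12$ while $t\lesssim n$ and like $\tfrac{n-\dim(S)-1}{2t}$ once $t\gtrsim n$. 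Integrating such a rate over $[x,ux]$ produces the two regimes of the claim: for $x-\dim(S)-1\leq n$ the constant part dominates and yields a decay governed by $x-\dim(S)-1$, whereas for $x-\dim(S)-1>n$ the effective threshold saturates, the polynomial (Fisher) tail takes over, and the rate caps at order $n$. I would finally use $\log u\geq (u-1)/u$ and the hypothesis $\dim(S)\leq n/2-1$ (so $n-\dim(S)-1\geq n/2$) to push all numerical losses into the single constant $\tfrac{u-1}{12u}$.

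The main obstacle is the averaging over the denominator $W$. Conditioning on the tail event $\{F\geq x\}$ biases $W$ towards small values, exactly the region where the pointwise ratio $\overline{G}(uxW)/\overline{G}(xW)$ is worst (closest to $1$), so the one-dimensional estimate cannot simply be pulled out of the expectation. Making the bias precise — equivalently, showing that the effective chi-square threshold saturates at order $n$ rather than growing like $x$ — is what both produces the $(x-\dim(S)-1)\wedge n$ form and makes the constant bookkeeping delicate; the generous factor $1/12$ is precisely the slack one needs to cover the exponential and polynomial regimes with a single expression.
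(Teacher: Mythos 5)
Your preliminary steps are sound: the reduction to $x>\dim(S)+1$, the chi-square hazard-rate bound $\overline{G}(us)\leq u^{(\dim(S)-1)/2}e^{-(u-1)s/2}\,\overline{G}(s)$, and the Gamma-tilt identity $\E\cro{e^{-(u-1)xW/2}\overline{G}(xW)}=\rho^{-(n-\dim(S)-1)/2}\,\P\pa{F\geq x/\rho}$ with $\rho=1+(u-1)x/(n-\dim(S)-1)$ all check out. But the argument does not close, and the gap is exactly where the lemma lives. The tilt leaves $\P\pa{F\geq x/\rho}$ on the right-hand side, and since $\rho>1$ this probability is \emph{larger} than $\P(F\geq x)$: the inequality points the wrong way. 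To convert it into the stated bound you would need an upper bound on the ratio $\P\pa{F\geq x/\rho}/\P\pa{F\geq x}$, which is a tail-ratio estimate for $F$ over the interval $[x/\rho,x]$ --- that is, another instance of the very lemma you are proving. The subsequent claim that the decay rate $-\frac{d}{dt}\log\P(F\geq t)$ ``behaves like $\tfrac12$ for $t\lesssim n$ and like $(n-\dim(S)-1)/(2t)$ beyond'' is precisely the integrated content of the lemma and is asserted, not derived; your closing paragraph correctly identifies the bias of $W$ on $\{F\geq x\}$ as the main obstacle but supplies no mechanism to handle it. As it stands, the proposal reduces the lemma to itself.

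For contrast, the paper sidesteps the conditioning on $W$ entirely and works with the density of $F$ itself (writing $d_1=\dim(S)+1$, $d_2=n-\dim(S)-1$): after the substitution $t\mapsto ut$ in the integral for $\P(F\geq ux)$, the ratio of integrands is controlled pointwise using that $t\mapsto (t+d_2)/(ut+d_2)$ is decreasing, so its supremum over $[x,\infty)$ is attained at $t=x$; the resulting prefactor is then split into $u^{d_1/2}\cro{(d_1+d_2)/(ud_1+d_2)}^{(d_1+d_2)/2}$, shown to be at most one by a derivative computation, times a second factor bounded via $\log(1+y)\geq y/(1+y)$ and the two cases $x\leq d_1+d_2$, $x>d_1+d_2$ (using $d_1\leq n/2\leq d_2$), which is where the $(x-d_1)\wedge n$ truncation actually comes from. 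If you want to rescue your route, you must prove a hazard-rate (or tail-ratio) lower bound for $F$ directly, which in effect means redoing this density computation; the one-dimensional chi-square estimate plus the tilt cannot substitute for it.
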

\medskip

\begin{lemma}\label{lemma_minoration_penalite}
For all $S\in\S$, we have
\begin{equation*}
\frac{n-\dim(S)-1}{n-\dim(S)} \pen_\Delta(S)\geq  2\Delta(S)+ \dim(S)-C\,,
\end{equation*}
where $C$ is a positive constant.
\end{lemma}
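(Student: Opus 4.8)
The plan is to turn the implicit definition of $\pen_{\Delta}(S)$ into a deterministic inequality for the single scalar the lemma controls, and then invert it. Write $D=\dim(S)$, $N=n-\dim(S)$, and let $U\sim\chi^2_{D+1}$ and $V\sim\chi^2_{N-1}$ be independent, so the defining relation reads $\E[(U-\tfrac{\pen_{\Delta}(S)}{N}V)_+]=e^{-\Delta(S)}$ and the target quantity is $m:=\tfrac{N-1}{N}\pen_{\Delta}(S)=\tfrac{\pen_{\Delta}(S)}{N}\E[V]$, the mean of $\tfrac{\pen_{\Delta}(S)}{N}V$. A first, cheap bound comes from $(x)_+\ge x$: since $\E[U]=D+1$, we get $e^{-\Delta(S)}\ge (D+1)-m$, hence $m\ge D=\dim(S)$. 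I will need this not only as a partial result but to guarantee that $t=m$ lies in the range where the sharp estimate below applies.

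Second, I would condition on $V$. Set $g(x):=\E[(U-x)_+]$, which is nonincreasing. By independence the defining relation is $e^{-\Delta(S)}=\E_V[g(\tfrac{\pen_{\Delta}(S)}{N}V)]$, and on $\{V\le N-1\}$ one has $\tfrac{\pen_{\Delta}(S)}{N}V\le m$, so $g(\tfrac{\pen_{\Delta}(S)}{N}V)\ge g(m)$. As the median of $\chi^2_{N-1}$ lies below its mean, $\P(V\le N-1)\ge 1/2$, whence $g(m)\le 2e^{-\Delta(S)}$. Everything then reduces to one inequality for $g$ that I aim to prove: $g(t)\ge c\,e^{-(t-D)/2}$ for all $t\ge D$, with a universal $c>0$. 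Indeed, feeding this into $g(m)\le 2e^{-\Delta(S)}$ gives $c\,e^{-(m-D)/2}\le 2e^{-\Delta(S)}$, i.e. $m\ge 2\Delta(S)+\dim(S)-2\log(2/c)$, which is exactly the claim with $C=2\log(2/c)$.

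The heart of the argument is thus the bound $g(t)\ge c\,e^{-(t-D)/2}$, and the delicate point is the constant: a direct Chernoff/large-deviation estimate of the $\chi^2_{D+1}$ tail loses a factor of order $\sqrt{D}$, which would only yield $m\ge 2\Delta(S)+\dim(S)-\log(\dim(S))-C$ and miss the clean coefficients. To recover the right polynomial prefactor I would instead use the monotonicity of $h(t):=g(t)e^{(t-D)/2}$. Writing $\overline{F}_{D+1}$ for the survival function of $\chi^2_{D+1}$, one has $g'(t)=-\overline{F}_{D+1}(t)$, so $h'(t)=e^{(t-D)/2}\bigl[\tfrac12 g(t)-\overline{F}_{D+1}(t)\bigr]$. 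Since $\chi^2_{D+1}$ is a Gamma law with shape $(D+1)/2\ge 1$ (because $D\ge 1$) and rate $1/2$, its hazard rate is nondecreasing and bounded by $1/2$; integrating $\overline{F}_{D+1}(u)\ge \overline{F}_{D+1}(t)e^{-(u-t)/2}$ over $u\ge t$ gives $g(t)=\int_t^\infty \overline{F}_{D+1}(u)\,du\ge 2\overline{F}_{D+1}(t)$, so $h'\ge 0$. Hence $h$ is nondecreasing on $[D,\infty)$ and $g(t)\ge g(D)\,e^{-(t-D)/2}$ there.

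It remains to bound $g(D)$ below by a universal constant, which is soft. By monotonicity of $g$, $g(D)\ge g(D+1)=\E[(\chi^2_{D+1}-(D+1))_+]=\tfrac12\,\E\bigl|\chi^2_{D+1}-(D+1)\bigr|$, the last step because $\chi^2_{D+1}$ has mean $D+1$. Applying the Cauchy–Schwarz chain $\E|Y|\ge (\E Y^2)^{3/2}/(\E Y^4)^{1/2}$ to $Y=\chi^2_{D+1}-(D+1)$, with $\E Y^2=2(D+1)$ and $\E Y^4=12(D+1)(D+5)$, yields $\E|Y|\ge c_0>0$ uniformly in $D$, hence $g(D)\ge c_0/2=:c$. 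Combining the three steps gives $g(t)\ge c\,e^{-(t-D)/2}$ on $[D,\infty)$, and since $m\ge D$ the inversion of the previous paragraph applies at $t=m$, delivering $\tfrac{N-1}{N}\pen_{\Delta}(S)=m\ge 2\Delta(S)+\dim(S)-C$. The main obstacle, as flagged, is precisely pinning down the coefficients $2$ and $1$: this is what forces the monotonicity analysis of $h$ in place of a blunt tail bound, whereas the reduction and the constant $c$ are routine.
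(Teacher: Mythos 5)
Your proof is correct, and its key step takes a genuinely different route from the paper's. Write $D=\dim(S)$, $N=n-\dim(S)$, $m=\tfrac{N-1}{N}\pen_{\Delta}(S)$ and $g(t)=\E[(U-t)_+]$ as you do. The paper handles the average over $V$ by Jensen's inequality (conditionally on $U$, the map $x\mapsto (U-\pen_{\Delta}(S)x/N)_+$ is convex), which gives $g(m)\le e^{-\Delta(S)}$ with no loss; it then drops to an indicator, $\P\left[U> m+1\right]\le e^{-\Delta(S)}$, thereby reducing the lemma to a lower bound on the chi-square tail, $\P\left[U\ge x+D+1\right]\ge C e^{-x/2}$ with a dimension-free constant, which it proves by explicit manipulation of the Gamma integral together with Stirling's formula. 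You instead stay with the stop-loss function $g$ throughout: you control the $V$-average by a median argument (paying a harmless factor $2$, and invoking the standard but unproved fact that the median of a chi-square lies below its mean --- the paper's Jensen step would give the same conclusion with no loss and no such fact), and you obtain the dimension-free exponential bound $g(t)\ge c\,e^{-(t-D)/2}$ structurally: since $\chi^2_{D+1}$ is a Gamma with shape $(D+1)/2\ge 1$ (legitimate, as $\dim(S)\ge 1$ for $S\in\S$), its hazard rate is at most $1/2$, hence $g(t)\ge 2\overline{F}_{D+1}(t)$ and $t\mapsto g(t)e^{(t-D)/2}$ is nondecreasing, while the anchor $g(D)\ge c$ follows from $\E|Y|\ge (\E Y^2)^{3/2}/(\E Y^4)^{1/2}$ applied to $Y=\chi^2_{D+1}-(D+1)$, whose second and fourth central moments you compute correctly. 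Both proofs turn on exactly the point you flag --- the prefactor of $e^{-x/2}$ must be uniform in $D$, which a crude Chernoff bound misses by a $\sqrt{D}$ factor --- but they secure it differently: the paper by direct computation on the density, you by soft structural facts (bounded failure rate of the Gamma, moment comparison). Your route avoids Stirling and all density computations and is arguably more natural, since the penalty is defined through an expectation rather than a tail probability; the paper's route is more self-contained, in that every fact it uses is verified on the spot.
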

We can now complete the proof of Proposition \ref{ORACLE-LINSELECT}. 
Applying Lemma \ref{Lemma_minoration1_penalite} with $1/(1-\alpha)=1.1/1.05$ and Lemma \ref{Lemma_minoration2_penalite} with $u=1.05/1.02$ and 
\[x_S=\frac{1.1}{1.05}\times\frac{n-\dim(S)-1}{n-\dim(S)}\pen_{\Delta}(S)\ ,\] 
we derive from (\ref{eq_majoration_principale_borne_proba}) the following upper bound.
\begin{eqnarray*}
\P\left[\tilde{\Sigma}> 0\right]&\leq &\sum_{S\in\mathbb{S}} \exp\left[-C_2\left(\{x_S-\dim(S)-1\}\wedge n\right)\right]
\P\left[F_{S}\geq x_S\right]\\
&\leq &\sum_{S\in\mathbb{S}} C_1\exp\left[-C_2\left(\{x_S-\dim(S)-1\}\wedge n\right)\right]e^{-\Delta(S)}\\
&\leq &\sum_{S\in\mathbb{S}} C_1\exp\left[-C_2\left(\Delta(S)\wedge n\right)\right]e^{-\Delta(S)}.
\end{eqnarray*}
The proof of the second part of Proposition \ref{ORACLE-LINSELECT} is complete.

\subsection{Proof of the technical Lemmas  \ref{Lemma_minoration1_penalite}, \ref{Lemma_minoration2_penalite} and  \ref{lemma_minoration_penalite}}
\subsubsection{Proof of Lemma \ref{Lemma_minoration1_penalite}}\ \\
Since $U$ is independent of $W$ and $x\to (1-y/x)_{+}$ is increasing for all $y>0$ we have
\begin{eqnarray*}
e^{-\Delta(S)}&=& \E\cro{U\left(1-\frac{n-\dim(S)-1}{n-\dim(S)}\pen_{\Delta}(S)W/U\right)_{+}}\\
&\geq & \E[U]\,\E\left[\left(1-\frac{n-\dim(S)-1}{n-\dim(S)}\pen_{\Delta}(S)/F\right)_{+}\right]\\
&\geq& (\dim(S)+1)\times \alpha\, \P\left(1-\frac{n-\dim(S)-1}{n-\dim(S)}\pen_{\Delta}(S)/F\geq \alpha\right).
\end{eqnarray*}

\subsubsection{Proof of Lemma \ref{Lemma_minoration2_penalite}}\ \\
Note that the bound is trivial if $x\leq \dim(S)+1$. In the sequel, we assume that $x\geq \dim(S)+1$.
We set $d_1= \dim(S)+1$, $d_2=n-\dim(S)-1$ and write $B(.,.)$ for the Beta function. Since $d_1F$ follows a Fisher distribution with $(d_1,d_2)$ degrees of freedom, we have
\begin{eqnarray*}
 \P\pa{F\geq ux}&=&\int_{ux}^{+\infty}\frac{t^{d_1/2}d_2^{d_2/2}}{(t+d_2)^{(d_1+d_2)/2}tB(d_1/2,d_2/2)}dt\\  &= &\int_{x}^{+\infty}\frac{(ut)^{d_1/2}d_2^{d_2/2}}{(ut+d_2)^{(d_1+d_2)/2}tB(d_1/2,d_2/2)}dt\\
&\leq & u^{d_1/2} \int_{x}^{+\infty}\left[\frac{t+d_2}{ut+d_2}\right]^{(d_1+d_2)/2}\frac{t^{d_1/2}d_2^{d_2/2}}{(t+d_2)^{(d_1+d_2)/2}tB(d_1/2,d_2/2)}dt \\ & \leq& u^{d_1/2}\left[\frac{x+d_2}{ux+d_2}\right]^{(d_1+d_2)/2}\P\pa{F\geq x}\\
&\leq & \left\{u^{d_1/2}\left[\frac{d_1+d_2}{ud_1+d_2}\right]^{(d_1+d_2)/2}\right\}\left\{\left[\frac{(x+d_2)(ud_1+d_2)}{(ux+d_2)(d_1+d_2)}\right]^{(d_1+d_2)/2}\right\}\\ &\times &\P\pa{F\geq x}\ .
\end{eqnarray*}
In order to conclude, we shall prove that the first term between brackets is smaller than one and we shall control the second term.
The derivative of the function 
\[g: u\mapsto \log\left[u^{d_1/2}\left[\frac{d_1+d_2}{ud_1+d_2}\right]^{(d_1+d_2)/2}\right]\ \ \text{ is }\ \  g'(u)=\frac{d_1}{2}\left[\frac{1}{u}-\frac{d_1+d_2}{ud_1+d_2} \right]\ ,\]
which is non positive for any $u\geq 1$. Since $g(1)=0$, we conclude that the first term is smaller than one. Let us turn to the logarithm of the second term:
\begin{eqnarray*}
 -\frac{d_1+d_2}{2}\log\left[\frac{ux+d_2}{x+d_2}\frac{d_1+d_2}{ud_1+d_2}\right]&=& -\frac{d_1+d_2}{2}\log\left[1+\frac{d_2(u-1)(x-d_1)}{(x+d_2)(ud_1+d_2)}\right]\\
&\hspace{-1.5cm}\leq &\hspace{-1cm}-\frac{d_1+d_2}{2}\frac{d_2(u-1)(x-d_1)}{(x+d_2)(ud_1+d_2)+ d_2(u-1)(x-d_1)}\\
&\hspace{-1.5cm}\leq &\hspace{-1cm} -\frac{(u-1)}{2u}(x-d_1) \left[\frac{x}{d_2}+1 +  \frac{x-d_1}{d_2+d_1}\right]^{-1}\\
&\hspace{-1.5cm}\leq &\hspace{-1cm} -\frac{(u-1)}{4u}\left[\frac{x-d_1}{2}\wedge \frac{n}{3}\right]\ ,
\end{eqnarray*}
where the last line is proved by considering separately $x\leq d_1+d_2$ and $x> d_1+d_2$ and by using $d_1\leq d_2\leq n/2$.

\subsubsection{Proof of Lemma \ref{lemma_minoration_penalite}}\ \\
We recall that the penalty $\pen_{\Delta}(S)$ is defined by
\begin{equation*}
\E\cro{\pa{U-{\pen_{\Delta}(S)\over n-\dim(S)}V}_{+}}=e^{-\Delta(S)}
\end{equation*}
where $x_{+}$ denotes the positive part of $x\in\mathbb{R}$ and $U,V$ are two independent $\chi^{2}$ random variables
with respectively $\dim(S)+1$ and $n-\dim(S)-1$ degrees of freedom. Let us lower bound this expectation applying Jensen's inequality.
\begin{eqnarray*}
\E\cro{\pa{U-{\pen_{\Delta}(S)\over n-\dim(S)}V}_{+}}&\geq &\E\cro{\pa{U-\frac{n-\dim(S)-1}{n-\dim(S)}\pen_{\Delta}(S)}_{+}}\\
& \geq & \mathbb{E}\left[\mathbf{1}\left\{{U>\frac{n-\dim(S)-1}{n-\dim(S)}\pen_{\Delta}(S)+1}\right\}\right]\ ,
\end{eqnarray*}
where $1\{A\}$ stands for the indicator function of the event $A$. Hence, we get
\begin{equation}\label{eq_preuve_lemma_minoration_penalite}
\pen_\Delta(S)\geq  \frac{n-\dim(S)}{n-\dim(S)-1}\left[\bar{\chi}^{-1}_{dim(S)+1}\left[e^{-\Delta(S)}\right]-1\right]\ ,
\end{equation}
where $\bar{\chi}^{-1}_{\dim(S)+1}(\alpha)$ is a $1-\alpha$ quantile of a $\chi^2$ random variable with $\dim(S)+1$ degrees of freedom.  \\

Let us note $k=\dim(S)+1$. For any positive number $x$, we have
\begin{eqnarray*}
\mathbb{P}\left[U\geq x+k\right]&=&\int_{x+k}^{+\infty}\frac{t^{k/2-1}e^{-t/2}}{2^{k/2}\Gamma(k/2)}dt= e^{-(x+k)/2}\int_{0}^{+\infty}\frac{(t+x+k)^{k/2-1}e^{-t/2}}{2^{k/2}\Gamma(k/2)}dt\\
&\geq & e^{-(x+k)/2}\frac{k^{k/2-1}}{2^{k/2}\Gamma(k/2)}\int_0^{\sqrt{k}} \exp\left[-\frac{t}{2}+\left(\frac{k}{2}-1\right)\log\left(1+\frac{t}{k}\right)\right]dt\\
&\geq & e^{-(x+k)/2}\frac{k^{k/2-1}}{2^{k/2}\Gamma(k/2)}\int_0^{\sqrt{k}}\exp\left[-\frac{t}{k}- \left(\frac{k}{2}-1\right)\frac{t^2}{2k^2}\right]dt\ ,
\end{eqnarray*}
since $\log(1+t)\geq t-t^{2}/2$. It follows that 
\[\mathbb{P}\left[U\geq x+k\right]\geq e^{-(x+k)/2}\frac{k^{k/2-1}}{2^{k/2}\Gamma(k/2)} \int_0^{\sqrt{k}}e^{-1}e^{-t/(4\sqrt{k})}dt\geq  C e^{-(x+k)/2}\frac{k^{k/2-1/2}}{2^{k/2}\Gamma(k/2)}\ .\]
By Stirling's expansion $\Gamma(k/2)\leq (k/2)^{k/2-1/2}e^{-k/2}\sqrt{2\pi}$ so that $\mathbb{P}\left[U\geq x+k\right]\geq Ce^{-x/2}$.
It follows that 
\[\bar{\chi}^{-1}_{\dim(S)+1}\left(e^{-\Delta(S)}\right)\geq 2\Delta(S)+ \dim(S)+1-C\,.\]

\section{Proof of the specific bounds for Lasso-\LinSelect\ and Group-Lasso-\LinSelect}\label{proof:oracle-lasso-glasso}

\subsection{Size of the support of the Lasso and Group-Lasso estimators}\ \\
For $\K\subset \ac{1,\ldots,M}$, we recall that
$\phi_{(\K)}$ denotes the largest eigenvalue of $\X_{(\K)}^T\X_{(\K)}$. 
\begin{lemma}\label{LEMMETECH2}
Let $\hK_{\lambda}$ be the subset of groups selected by the group-Lasso estimator  $\widehat{\beta}_{\lambda}$. Then, on the event 
$\mathcal{A}_{\lambda}=\bigcap_{k=1}^M\ac{\|\X_{G_{k}}^T\eps\|_{2}\leq \lambda_{k}/4}$ we have
$$\sum_{k\in\hK_{\lambda}}\lambda^2_{k} \leq 16\, \phi_{(\hK_{\lambda})}\, \|\X\hbeta_{\lambda}-\X\beta_{0}\|^2_{2}\,.$$
In particular, for the Lasso estimator  $\hbeta_{\lambda}^L$, we have  the upper bound
$$\lambda^2 \|\hbeta_{\lambda}^L\|_{0} \leq 16\, \phi_{\mathrm{supp}(\hbeta_{\lambda}^L)}\, \|\X\hbeta_{\lambda}^L-\X\beta_{0}\|^2_{2}$$
on the event
$\mathcal{A}_{\lambda}= \ac{|\X^T\eps|_{\ell^\infty}\leq \lambda/4}$.
\end{lemma}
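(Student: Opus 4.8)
The plan is to read the support size directly off the first-order (subgradient) optimality conditions for the group-Lasso. Since $\hbeta_{\lambda}$ minimizes $\norm{Y-\X\beta}_2^2+\sum_{k=1}^M\lambda_k\norm{\beta^{G_k}}_2$, for every selected group $k\in\hK_{\lambda}$ (that is, $\hbeta_{\lambda}^{G_k}\neq 0$) the stationarity condition reads
\[
2\,\X_{G_k}^T(Y-\X\hbeta_{\lambda})=\lambda_k\,\frac{\hbeta_{\lambda}^{G_k}}{\norm{\hbeta_{\lambda}^{G_k}}_2}.
\]
Taking Euclidean norms gives the exact identity $\norm{\X_{G_k}^T(Y-\X\hbeta_{\lambda})}_2=\lambda_k/2$ for each $k\in\hK_{\lambda}$; this is the starting point.

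Next I would write $Y-\X\hbeta_{\lambda}=\eps-\X(\hbeta_{\lambda}-\beta_0)$ and apply the triangle inequality, so that for $k\in\hK_{\lambda}$,
\[
\frac{\lambda_k}{2}\le\norm{\X_{G_k}^T\eps}_2+\norm{\X_{G_k}^T\X(\hbeta_{\lambda}-\beta_0)}_2.
\]
On $\mathcal{A}_{\lambda}$ the first term is at most $\lambda_k/4$, so after rearranging $\lambda_k/4\le\norm{\X_{G_k}^T\X(\hbeta_{\lambda}-\beta_0)}_2$. Squaring and summing over $k\in\hK_{\lambda}$ yields
\[
\frac{1}{16}\sum_{k\in\hK_{\lambda}}\lambda_k^2\le\sum_{k\in\hK_{\lambda}}\norm{\X_{G_k}^T\X(\hbeta_{\lambda}-\beta_0)}_2^2=\norm{\X_{(\hK_{\lambda})}^T\X(\hbeta_{\lambda}-\beta_0)}_2^2,
\]
the last equality merely reorganising the stacked blocks indexed by $\bigcup_{k\in\hK_{\lambda}}G_k$.

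To close, set $v=\X(\hbeta_{\lambda}-\beta_0)$ and note $\norm{\X_{(\hK_{\lambda})}^Tv}_2^2=v^T\X_{(\hK_{\lambda})}\X_{(\hK_{\lambda})}^Tv$. Because $\X_{(\hK_{\lambda})}\X_{(\hK_{\lambda})}^T$ and $\X_{(\hK_{\lambda})}^T\X_{(\hK_{\lambda})}$ share the same nonzero eigenvalues, the top eigenvalue of the former equals $\phi_{(\hK_{\lambda})}$, whence $v^T\X_{(\hK_{\lambda})}\X_{(\hK_{\lambda})}^Tv\le\phi_{(\hK_{\lambda})}\norm{v}_2^2$. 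Chaining the two displays gives exactly $\sum_{k\in\hK_{\lambda}}\lambda_k^2\le 16\,\phi_{(\hK_{\lambda})}\norm{\X\hbeta_{\lambda}-\X\beta_0}_2^2$. The Lasso bound is then the special case of singleton groups: here $\lambda_k=\lambda$, the event $\mathcal{A}_{\lambda}$ collapses to $\{|\X^T\eps|_{\ell^\infty}\le\lambda/4\}$, one has $\hK_{\lambda}=\supp(\hbeta_{\lambda}^L)$, and $\sum_{k\in\hK_{\lambda}}\lambda^2=\lambda^2\norm{\hbeta_{\lambda}^L}_0$.

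I expect the only genuinely delicate points to be the bookkeeping in the optimality conditions — ensuring that the exact equality $\norm{\X_{G_k}^T(Y-\X\hbeta_{\lambda})}_2=\lambda_k/2$ holds precisely on the selected groups and not merely as an inequality — together with the eigenvalue identification $\norm{\X_{(\hK_{\lambda})}^T\,\cdot\,}_2\le\phi_{(\hK_{\lambda})}^{1/2}\norm{\cdot}_2$ in the final step; the remaining manipulations are routine.
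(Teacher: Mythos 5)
Your proof is correct, and it reaches the paper's inequality by a leaner route than the paper itself. The two arguments share the same skeleton---the KKT conditions, the event $\mathcal{A}_{\lambda}$, and the restricted eigenvalue $\phi_{(\hK_{\lambda})}$---and in fact the vector you bound from below, $\X_{(\hK_{\lambda})}^T\X(\hbeta_{\lambda}-\beta_0)$, coincides by stationarity with the vector $\X_{(\hK_{\lambda})}^T\eps-\lambda z_{(\hK_{\lambda})}/2$ that the paper manipulates: both proofs show its squared norm is at least $\sum_{k\in\hK_{\lambda}}\lambda_k^2/16$ on $\mathcal{A}_{\lambda}$. The difference is in the upper bound. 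You finish with the one-line spectral inequality $\|\X_{(\hK_{\lambda})}^Tv\|_2^2=v^T\X_{(\hK_{\lambda})}\X_{(\hK_{\lambda})}^Tv\le\phi_{(\hK_{\lambda})}\|v\|_2^2$, which rests only on the fact that $\X_{(\hK_{\lambda})}\X_{(\hK_{\lambda})}^T$ and $\X_{(\hK_{\lambda})}^T\X_{(\hK_{\lambda})}$ have the same nonzero eigenvalues. The paper instead solves the KKT system for $\hbeta_{(\hK_{\lambda})}$ via the Moore--Penrose pseudo-inverse, performs a Pythagorean decomposition of $\|\X\beta_0-\X\hbeta_{\lambda}\|_2^2$, and then invokes a separate technical result (its Lemma~\ref{tech1}, stating $\|x\|_2^2\le\varphi_{\max}(A^TA)\|A(A^TA)^+x\|_2^2$ for $x$ in the range of $A^T$). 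The two chains are equivalent in strength: the paper's intermediate vector $\X_{(\hK_{\lambda})}(\X_{(\hK_{\lambda})}^T\X_{(\hK_{\lambda})})^+(\X_{(\hK_{\lambda})}^T\eps-\lambda z_{(\hK_{\lambda})}/2)$ is exactly $P_{(\hK_{\lambda})}(\X\hbeta_{\lambda}-\X\beta_0)$, the projection of your $v$ onto the range of $\X_{(\hK_{\lambda})}$ (with $P_{(\hK_{\lambda})}$ the orthogonal projector onto that range), and your step could be stated at the same sharpness by writing $\X_{(\hK_{\lambda})}^Tv=\X_{(\hK_{\lambda})}^TP_{(\hK_{\lambda})}v$. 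What your version buys is economy---no pseudo-inverse, no projection bookkeeping, no auxiliary lemma---while losing nothing in the constants or the conclusion. The one point that genuinely needs the care you give it is the exact equality $\|\X_{G_k}^T(Y-\X\hbeta_{\lambda})\|_2=\lambda_k/2$ on active groups, which holds because the subdifferential of $\|\cdot\|_2$ at a nonzero point is the singleton containing the unit vector in its direction; everything else is, as you say, routine.
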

The proof of this lemma is delayed to the Appendix~\ref{proof:LEMMETECH2}.
The above bounds are similar to those stated in Bickel {\it et al.}~\cite{bickeltsy08} and Lounici {\it et al.}~\cite{tsybakovgroup}, except that it involves the restricted eigenvalue $\phi_{(\hK_{\lambda})}$ instead of the largest eigenvalue $\phi_{\max}$ of $X^TX$. When $|\hK_{\lambda}|$ is small compared to $n$ the restricted eigenvalue $\phi_{(\hK_{\lambda})}$ can be much smaller than $\phi_{\max}$. Actually, since $X^TX$ has at most $n$ non-zero eigenvalues and $\mathrm{Tr}(X^TX)=p$, we always have $\phi_{\max}\geq p/n$ which can be large when $p\gg n$.

\subsection{Proof of Proposition~\ref{prte_risque_proba_LinSelect_Lasso}}\ \\
The first step is to provide a sufficient condition for having $\|\hbeta_{\lambda}\|_{0}\leq n/(3\log(p))$.
Recall that the compatibility constant $\kappa[\xi,T]$ is defined in Section~\ref{result-lasso}.
\begin{lemma}\label{lem:cardinal-lasso}
Assume that $\lambda\geq 8\sigma\sqrt{\log(p)}$ and
\begin{equation}
 1\leq\|\beta_{0}\|_{0}\leq {\kappa^2[5,\supp(\beta_{0})] \over 96\,\phi_{*}}\times {n\over \log(p)}\ .\label{eq:hypothese_sparsite}
\end{equation}
Then, on the event $\mathcal{A}= \ac{|\X^T\eps|_{\ell^\infty}\leq 2\sigma\sqrt{\log(p)}}$ we have 
$\|\hbeta_{\lambda}\|_{0}\leq n/(3\log(p))$.
\end{lemma}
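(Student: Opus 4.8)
The key simplification is that the statement is deterministic: I work on a fixed realization in $\mathcal{A}$ and never need a probabilistic estimate. Since $\lambda\geq 8\sigma\sqrt{\log(p)}$, on $\mathcal{A}$ we have $|\X^T\eps|_{\ell^\infty}\leq 2\sigma\sqrt{\log(p)}\leq\lambda/4$, so $\mathcal{A}$ is contained in the event $\mathcal{A}_{\lambda}=\{|\X^T\eps|_{\ell^\infty}\leq\lambda/4\}$ on which Lemma~\ref{LEMMETECH2} applies. Thus both the basic Lasso bookkeeping and Lemma~\ref{LEMMETECH2} are available throughout.

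First I would run the standard Lasso analysis. Starting from $\|Y-\X\hbeta^L_{\lambda}\|_2^2+\lambda\|\hbeta^L_{\lambda}\|_1\leq\|Y-\X\beta_0\|_2^2+\lambda\|\beta_0\|_1$, substituting $Y=\X\beta_0+\eps$ and controlling the cross term by $\langle\eps,\X(\hbeta^L_{\lambda}-\beta_0)\rangle\leq|\X^T\eps|_{\ell^\infty}\|\hbeta^L_{\lambda}-\beta_0\|_1\leq(\lambda/4)\|\hbeta^L_{\lambda}-\beta_0\|_1$, one gets, with $T=\supp(\beta_0)$ and $u=\hbeta^L_{\lambda}-\beta_0$, both the cone inclusion $\|u_{T^c}\|_1\leq 3\|u_T\|_1$ (so $u\in\mathcal{C}(5,T)$) and the inequality $\|\X u\|_2^2\leq\tfrac32\lambda\|u_T\|_1$. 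Feeding in the compatibility constant, $\|u_T\|_1\leq|T|^{1/2}\|\X u\|_2/\kappa[5,T]$, yields the prediction bound
\[
\|\X(\hbeta^L_{\lambda}-\beta_0)\|_2^2\leq C\,\frac{\lambda^2\|\beta_0\|_0}{\kappa^2[5,\supp(\beta_0)]}.
\]

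Second, I would insert this into Lemma~\ref{LEMMETECH2}, which gives $\lambda^2\|\hbeta^L_{\lambda}\|_0\leq 16\,\phi_{\supp(\hbeta^L_{\lambda})}\,\|\X(\hbeta^L_{\lambda}-\beta_0)\|_2^2$. Dividing by $\lambda^2$ produces the self-referential bound
\[
\|\hbeta^L_{\lambda}\|_0\leq C'\,\frac{\phi_{\supp(\hbeta^L_{\lambda})}}{\kappa^2[5,\supp(\beta_0)]}\,\|\beta_0\|_0,
\]
in which the sparse eigenvalue is evaluated at the very support whose cardinality I am trying to bound. Closing this loop is the crux. Writing $m=n/(3\log p)$, the plan is to use sub-additivity of sparse eigenvalues: covering any index set $S$ by $\lceil|S|/m\rceil$ blocks of size at most $m$ and applying the triangle inequality to $\X_S v=\sum_i\X_{\J_i}v_{\J_i}$ gives $\phi_{S}\leq\lceil|S|/m\rceil\,\phi_*$. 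Taking $S=\supp(\hbeta^L_{\lambda})$ turns the display into $\|\hbeta^L_{\lambda}\|_0\big/\lceil\|\hbeta^L_{\lambda}\|_0/m\rceil\leq C'\phi_*\|\beta_0\|_0/\kappa^2[5,\supp(\beta_0)]$. Arguing by contradiction, if $\|\hbeta^L_{\lambda}\|_0>m$ then the left-hand side is at least $m/2$, which forces a lower bound on $\|\beta_0\|_0$; the assumption~\eref{eq:hypothese_sparsite} (together with $n/\log p=3m$) is designed to be incompatible with this, giving $\|\hbeta^L_{\lambda}\|_0\leq m=n/(3\log p)$.

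The hard part is purely quantitative: the constant $1/96$ is exactly calibrated so that the factor $16$ from Lemma~\ref{LEMMETECH2}, the constant from the prediction step, the identity $n/\log p=3m$, and the eigenvalue-splitting factor combine into a \emph{strict} contradiction. The crude bookkeeping $\lceil s/m\rceil\leq 2s/m$ is too lossy near $s\approx m$, so I would instead keep the sharp estimate $s/\lceil s/m\rceil\geq m/2$ and track every constant, and most likely sharpen either the prediction constant (using the strict cone membership $u\in\mathcal{C}(5,T)$) or the eigenvalue splitting in the regime $m<|S|\leq 2m$ in order to land below the threshold. This final constant-chasing, rather than any conceptual step, is where the real work lies.
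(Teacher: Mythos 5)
Your skeleton is the same as the paper's: a prediction bound for the Lasso, fed into Lemma~\ref{LEMMETECH2}, then closed through the splitting bound $\phi_{\hJ}\leq\pa{1+\mathrm{Card}(\hJ)/d^*}\phi_*$ (with $d^*=n/(3\log p)$, exactly your sub-additivity argument) and a self-consistency inequality. But the lemma is purely quantitative, and your proof as written does not establish it with the stated constant $96$ --- it establishes it only with $96$ replaced by $216$, i.e.\ under a strictly stronger sparsity hypothesis. The paper's prediction bound is not the basic-inequality bound you derive: it invokes a sharp oracle inequality (a slight variation of Theorem~14 in \cite{Kolt11}) which, evaluated at $\beta=\beta_0$, gives $\|\X\hbeta_{\lambda}-\X\beta_{0}\|_2^2\leq \lambda^2\|\beta_{0}\|_{0}/\kappa^2[5,\supp(\beta_{0})]$ with leading constant exactly $1$ on $\mathcal{A}$. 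Lemma~\ref{LEMMETECH2} then gives $\mathrm{Card}(\hJ)\leq 16\,\phi_{\hJ}\,\|\beta_{0}\|_{0}/\kappa^2[5,\supp(\beta_{0})]$, and under \eref{eq:hypothese_sparsite} (using $n/\log p=3d^*$) the prefactor satisfies $16\,\phi_*\|\beta_{0}\|_{0}/\kappa^2[5,\supp(\beta_{0})]\leq d^*/2$, so that $\mathrm{Card}(\hJ)\leq \tfrac12\pa{d^*+\mathrm{Card}(\hJ)}$, hence $\mathrm{Card}(\hJ)\leq d^*$. The constant $96=6\times16\times 1$ is calibrated exactly to this pair (oracle constant $1$, factor $16$).

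The genuine gap is that with your prediction constant $9/4$ the loop \emph{provably cannot be closed}, no matter how you treat the eigenvalue side. You get $\mathrm{Card}(\hJ)\leq 36\,\phi_{\hJ}\,\|\beta_{0}\|_{0}/\kappa^2[5,\supp(\beta_{0})]$, and under \eref{eq:hypothese_sparsite} the self-consistency inequality reads $s\leq \tfrac98\, d^*\lceil s/d^*\rceil$ for $s=\mathrm{Card}(\hJ)$; since $s\leq d^*\lceil s/d^*\rceil$ holds for \emph{every} integer $s$, this rules out nothing. More generally, because no splitting estimate can beat the trivial identity $s\leq d^*\lceil s/d^*\rceil$, a contradiction is possible only if $16\,C_{\mathrm{pred}}<32$, i.e.\ $C_{\mathrm{pred}}<2$: your second repair route (sharpening the eigenvalue splitting near $|S|\approx 2d^*$) is a dead end, and $C_{\mathrm{pred}}=9/4$ is fatal. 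The viable repair is your first one, and there is a self-contained way to do it without citing \cite{Kolt11}: instead of the basic inequality, start from the KKT condition $2\X^T(Y-\X\hbeta_{\lambda})=\lambda\hat z$ with $\hat z\in\partial\|\hbeta_{\lambda}\|_1$, pair it with $\hbeta_{\lambda}-\beta_0$ and use $\<\hat z,\hbeta_{\lambda}\>=\|\hbeta_{\lambda}\|_1$, $\<\hat z,\beta_{0}\>\leq\|\beta_{0}\|_1$. This puts $2\|\X u\|_2^2$ (not $\|\X u\|_2^2$) on the left-hand side of the same chain of inequalities, yielding $\|\X u\|_2^2\leq \tfrac{9}{16}\,\lambda^2\|\beta_{0}\|_{0}/\kappa^2[5,\supp(\beta_{0})]$, i.e.\ $C_{\mathrm{pred}}=9/16<1$; then $16\times\tfrac{9}{16}=9$ and the hypothesis closes the loop with room to spare ($6\times 9=54<96$). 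Note finally that the cone opening $5$ in $\kappa[5,\supp(\beta_{0})]$ is not slack you can cash in for a better constant --- the argument only ever produces the cone condition $\|u_{T^c}\|_1\leq 3\|u_T\|_1$, and widening the cone in the definition of $\kappa$ only makes $\kappa$ smaller.
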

\begin{proof}[Proof of Lemma~\ref{lem:cardinal-lasso}]
We write $\hJ$ for the support of $\hbeta_{\lambda}$.
A slight variation of Theorem~14 in \cite{Kolt11} ensures that
\begin{equation}\label{KLT}
\|\X\hbeta_{\lambda}-\X\beta_{0}\|_2^2\leq \inf_{\beta\neq 0}\ac{\|\X\beta_{0}-\X\beta\|^2_{2}+{\lambda^2\over \kappa^2[5,\supp(\beta)]}\|\beta\|_{0}}
\end{equation}
on the event $\mathcal{A}$.
Combining Lemma~\ref{LEMMETECH2} with the bound (\ref{KLT}) we obtain that
$${\rm Card}(\hJ)\leq 16 \,\phi_{\hJ}\,{\|\beta_{0}\|_{0} \over \kappa^2[5,\supp(\beta_{0})]}.$$
Let us set $d^*=n/[3\log(p)]$.
The upper-bound
$\phi_{\hJ}\leq (1+ {\rm Card}(\hJ)/d^*)\phi_{*}$
enforces
$${\rm Card}(\hJ) \leq {16 \phi_{*} \|\beta_{0}\|_{0} \over \kappa^2[5,\supp(\beta_{0})]}\left[1+\frac{\mathrm{Card}(\hJ)}{d^*}\right]\leq\pa{d^*+\mathrm{Card}(\hJ)}/2\,,$$
where the last inequality follows from (\ref{eq:hypothese_sparsite}).
\end{proof}

We can now complete the proof of Proposition~\ref{prte_risque_proba_LinSelect_Lasso}. We recall that the event  $\mathcal{A}= \ac{|\X^T\eps|_{\ell^\infty}\leq 2\sigma\sqrt{\log(p)}}$ has probability at least $1-1/p$. 
Let us set 
$$\lambda_0=\sqrt{16(4\vee \phi_{*})\log(p) \sigma^2}\geq 8\sigma\sqrt{\log(p)}.$$
Under the hypothesis~(\ref{eq:hypothese_sparsite}), the combination of Lemma~\ref{lem:cardinal-lasso} with Proposition~\ref{ORACLE-LINSELECT} ensures that with probability larger than  
$1-C_1p^{-C_2}$ we have
\begin{eqnarray*}
\norm{{\bf X}\beta_{0}-{\bf X}\widehat \beta_{\widehat \lambda}}^{2}_{2} 
&\leq & C\big\{\|{\bf X}\beta_{0}-{\bf X}\widehat{\beta}_{\lambda_{0}}\|^2_{2}+[\|\widehat{\beta}_{\lambda_{0}}\|_0\vee 1]\log(p)\sigma^2\big\}.
\end{eqnarray*}
We upper bound the right-hand side by combining Lemma~\ref{LEMMETECH2} with~(\ref{KLT})
\begin{eqnarray*}
\norm{{\bf X}\beta_{0}-{\bf X}\widehat \beta_{\widehat \lambda}}^{2}_{2} &\leq&C \pa{1+{16\phi_{\hJ}\log(p)\sigma^2\over \lambda^2_{0}}} \\
&&\times\inf_{\beta\neq 0}\ac{\|\X\beta_{0}-X\beta\|^2_{2}+{\lambda^2_{0}\over \kappa^2[5,\supp(\beta)]}\|\beta\|_{0}}\\
&\leq& C'  \inf_{\beta\neq 0}\ac{\|\X\beta_{0}-X\beta\|^2_{2}+{\phi_{*}\log(p)\sigma^2\over \kappa^2[5,\supp(\beta)]}\|\beta\|_{0}},
\end{eqnarray*}
where we used in the last inequality that $\hJ$ (the support of $\hbeta_{\lambda_{0}}$) is of size at most $n/(3\log(p))$.

\subsection{Proof of Proposition~\ref{prop_risque_glasso}}\ \\
The proof of Proposition~\ref{prop_risque_glasso} is very similar to that of Proposition~\ref{prte_risque_proba_LinSelect_Lasso}. We only sketch the main lines.
The first step is to provide a sufficient condition for having $|\hK_{\lambda}|\leq (n-2)/(2T\vee 3\log(M))$.
Recall that the compatibility constant $\kappa_{G}[\xi,s]$ is defined in~(\ref{eq:kappa-glasso}) and $\phi_{*}$ in~(\ref{eq:phi-glasso}).
\begin{lemma}\label{lem:cardinal-glasso}
Assume that 
\begin{eqnarray}
\lambda_{k}^2&=&96\phi_{*}(T\vee 3\log(M))\sigma^2,\quad\mathrm{for}\ k=1,\ldots,M\label{lambda-glasso}\\
\mathrm{and}\quad 1&\leq& |\mathcal{K}_{0}|\ \leq\ \frac{\kappa^2_G[3,|\mathcal{K}_0|]}{2^{9}\phi_*}\times \frac{n-2}{2T\vee 3\log(M)}.\label{condition_group_lasso}
\end{eqnarray}
Then we have $|\hK_{\lambda}|\leq (n-2)/(3\log(M)\vee 2T)$, with probability at least $1-3/M$.
\end{lemma}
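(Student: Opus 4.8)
The plan is to mirror the proof of Lemma~\ref{lem:cardinal-lasso}, replacing the coordinatewise noise control by a group (chi-square) control and replacing the coordinate sparse eigenvalue and compatibility constant by their group analogues $\phi_{(\K)}$ and $\kappa_{G}$. Throughout I write $\lambda^{2}=96\phi_{*}(T\vee 3\log(M))\sigma^{2}$ for the common value of $\lambda_{k}^{2}$ fixed in~(\ref{lambda-glasso}) and set $d^{*}=(n-2)/(2T\vee 3\log(M))$, so that the target reads $|\hK_{\lambda}|\leq d^{*}$.

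First I would work on the favourable event $\mathcal{A}_{\lambda}=\bigcap_{k=1}^{M}\ac{\norm{\X_{G_{k}}^{T}\eps}_{2}\leq \lambda_{k}/4}$ introduced in Lemma~\ref{LEMMETECH2}. Here $\X_{G_{k}}^{T}\eps$ is a centred Gaussian vector whose squared norm is a weighted sum of chi-squares with $\E\norm{\X_{G_{k}}^{T}\eps}_{2}^{2}=\sigma^{2}\mathrm{Tr}(\X_{G_{k}}^{T}\X_{G_{k}})=T\sigma^{2}$ and largest weight at most $\phi_{*}\sigma^{2}$ (since $\phi_{(\{k\})}\leq\phi_{*}$). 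Applying the Laurent--Massart chi-square deviation inequality (Lemma~1 in~\cite{Laurent00}) with deviation parameter a suitable multiple of $\log(M)$ controls $\P(\norm{\X_{G_{k}}^{T}\eps}_{2}^{2}>\lambda_{k}^{2}/16)$: because the threshold $\lambda_{k}^{2}/16=6\phi_{*}(T\vee 3\log(M))\sigma^{2}$ dominates $T\sigma^{2}+2\sigma^{2}\sqrt{\phi_{*}T}\sqrt{t}+2\phi_{*}\sigma^{2}t$ (using $2\sqrt{ab}\leq a+b$ to absorb both the $T$ and the $\log(M)$ contributions), each group fails with probability polynomially small in $M$, and a union bound over the $M$ groups, together with the control of the remaining noise events entering the prediction bound below, yields $\P(\mathcal{A}_{\lambda})\geq 1-3/M$. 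Tracking the constants in this tail estimate is the main technical point of the whole lemma, since it is the only genuinely new ingredient relative to the coordinate-sparse case (where the analogous event involved only scalar Gaussian tails union-bounded over $p$ coordinates).

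On $\mathcal{A}_{\lambda}$ two deterministic bounds are then available. On one hand, the group version of the prediction inequality (a variant of Theorem~14 in~\cite{Kolt11}, cf.\ Lounici {\it et al.}~\cite{tsybakovgroup}) applied at $\beta=\beta_{0}$ gives $\norm{\X\hbeta_{\lambda}-\X\beta_{0}}_{2}^{2}\leq C\,|\K_{0}|\lambda^{2}/\kappa_{G}^{2}[3,|\K_{0}|]$, the bias term vanishing for this choice. On the other hand, Lemma~\ref{LEMMETECH2} together with $\lambda_{k}\equiv\lambda$ gives $|\hK_{\lambda}|\lambda^{2}\leq 16\,\phi_{(\hK_{\lambda})}\norm{\X\hbeta_{\lambda}-\X\beta_{0}}_{2}^{2}$. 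Combining the two and cancelling $\lambda^{2}$ produces
\begin{equation*}
|\hK_{\lambda}|\ \leq\ 16C\,\phi_{(\hK_{\lambda})}\,\frac{|\K_{0}|}{\kappa_{G}^{2}[3,|\K_{0}|]}.
\end{equation*}

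Finally I would close the argument exactly as in Lemma~\ref{lem:cardinal-lasso}, using the subadditivity of the largest sparse eigenvalue: partitioning $\hK_{\lambda}$ into blocks of at most $d^{*}$ groups and bounding $\norm{\X_{(\hK_{\lambda})}u}_2^2$ by the number of blocks times the blockwise maxima gives $\phi_{(\hK_{\lambda})}\leq(1+|\hK_{\lambda}|/d^{*})\phi_{*}$, where $d^{*}$ is precisely the cardinality bound appearing in the definition~(\ref{eq:phi-glasso}) of $\phi_{*}$. Inserting this and invoking the sparsity assumption~(\ref{condition_group_lasso}), $|\K_{0}|\leq \kappa_{G}^{2}[3,|\K_{0}|]\,d^{*}/(2^{9}\phi_{*})$, where the constant $2^{9}$ is tuned so that $16C\phi_{*}|\K_{0}|/\kappa_{G}^{2}[3,|\K_{0}|]\leq d^{*}/2$, the self-referential bound becomes $|\hK_{\lambda}|\leq (d^{*}+|\hK_{\lambda}|)/2$, which rearranges to $|\hK_{\lambda}|\leq d^{*}$, as claimed. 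Everything downstream of the chi-square control is thus the same self-bounding mechanism as in the coordinate-sparse case.
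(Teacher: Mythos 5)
Your proposal is correct and follows essentially the same route as the paper: both combine the group-Lasso prediction bound of Lounici \emph{et al.}~\cite{tsybakovgroup} applied at $\beta_0$, the support bound of Lemma~\ref{LEMMETECH2}, the sparse-eigenvalue inequality $\phi_{(\hK_{\lambda})}\leq\bigl(1+|\hK_{\lambda}|/k^*\bigr)\phi_*$, and the same self-bounding rearrangement $|\hK_{\lambda}|\leq(k^*+|\hK_{\lambda}|)/2$ driven by condition~\eref{condition_group_lasso}. The only difference is where the probabilistic content is sourced: the paper takes both the oracle inequality and the $1-3/M$ probability directly from Theorem~3.1 of~\cite{tsybakovgroup}, whereas you re-derive the noise-event probability via Laurent--Massart chi-square tail bounds, which is sound but not a genuinely different argument.
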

\begin{proof}[Proof of Lemma~\ref{lem:cardinal-glasso}]
We set $k^*=(n-2)/(3\log(M)\vee 2T)$.  Theorem 3.1 in \cite{tsybakovgroup} gives
\begin{equation}\label{eq:glasso-tsyb}
 \|{\bf X}\widehat{\beta}_{\lambda}-{\bf X}\beta_0\|_2^2\leq \frac{16}{\kappa^2_G[3,|\mathcal{K}_0|]}\,|\mathcal{K}_{0}|\, \lambda_{1}^2\, ,
\end{equation}
with probability larger than $1-3/M$.
Combining this bound with Lemma~\ref{LEMMETECH2} and the bound $\phi_{(\widehat{\mathcal{K}}_\lambda)}\leq [1+|\widehat{\mathcal{K}}_\lambda|/k_*]\phi_*$, we get that with probability larger than $1-3/M$  
\begin{eqnarray*}
|\widehat{\mathcal{K}}_{\lambda}|&\leq& \frac{2^{8}}{\kappa^2_G[3,|\mathcal{K}_0|]}\phi_{(\widehat{\mathcal{K}}_\lambda)}|\mathcal{K}_{0}|\\ 
&\leq&  \frac{2^{8}}{\kappa^2_G[3,|\mathcal{K}_0|]}\left[1+ \frac{|\widehat{\mathcal{K}}_\lambda|}{k_*}\right]\phi_*|\mathcal{K}_{0}|\ \leq \ (k^*+|\hK_{\lambda}|)/2\,,
\end{eqnarray*}
where the last bound follows from (\ref{condition_group_lasso}).
\end{proof}
We complete now the proof of Proposition~\ref{prop_risque_glasso}.
Assume that (\ref{lambda-glasso}) and (\ref{condition_group_lasso}) are satisfied.
Combining Lemma~\ref{lem:cardinal-glasso} with Proposition~\ref{ORACLE-LINSELECT} ensures that with probability larger than  
$1-C_1M^{-C_2}-3/M$ we have
\begin{eqnarray*}
C^{-1}\norm{{\bf X}\beta_{0}-{\bf X}\widehat \beta_{\widehat \lambda}}^{2}_{2} &\leq& 
\|{\bf X}\hbeta_{\lambda}-{\bf X}\beta_{0}\|_2^2+(1\vee  |\hK_{\lambda}|)\pa{T\vee \log(M)}\sigma^2\\
&\leq& 2  \pa{\|\X\hbeta_{\lambda}-\X\beta_{0}\|^2_{2}\vee\cro{\pa{T\vee \log(M)}\sigma^2}}\,.
\end{eqnarray*}
Proposition~\ref{prop_risque_glasso} then simply follows from (\ref{eq:glasso-tsyb}).

\subsection{Proof of Lemma~\ref{LEMMETECH2}}\label{proof:LEMMETECH2}
We write $\hbeta$ for $\hbeta_{\lambda}$, $\hK$ for $\hK_{\lambda}$ and $A^+$ for the Moore-Penrose pseudo-inverse of $A$. The optimality condition gives
\begin{equation}\label{KKT2}
2\X_{(\hK)}^T(Y-\X_{(\hK)}\hbeta_{(\hK)})=\lambda z_{(\hK)}
\end{equation}
where $\|z^{G_{k}}\|_{2}=1$ for all $k\in\hK$.
As a consequence we have
$$\hbeta_{(\hK)}=(\X_{(\hK)}^T\X_{(\hK)})^+(\X_{(\hK)}^TY-\lambda z_{(\hK)}/2)$$
and
$$\X\hbeta=P_{(\hK)}Y-\lambda \X_{(\hK)}(\X_{(\hK)}^T\X_{(\hK)})^+ z_{(\hK)}/2$$
where $P_{(\hK)}$ is the orthogonal projector onto the range of $\X_{(\hK)}$.
Pythagorean equality gives
\begin{eqnarray*}
\|\X\beta_{0}-\X\hbeta\|^2_{2}&=&\|\X\beta_{0}-P_{(\hK)}\X\beta_{0}\|^2_{2}+\|P_{(\hK)}\eps-\lambda \X_{(\hK)}(\X_{(\hK)}^T\X_{(\hK)})^+ z_{(\hK)}/2\|^2_{2}\\
&\geq & \|\X_{(\hK)}(\X_{(\hK)}^T\X_{(\hK)})^+ (\X_{(\hK)}^T\eps-\lambda z_{(\hK)}/2)\|^2_{2}.
\end{eqnarray*}
From (\ref{KKT2}) we know that the vector $\X_{(\hK)}^T\eps-\lambda z_{(\hK)}/2$ belongs to the range of 
$\X_{(\hK)}^T$ and therefore (see Lemma~\ref{tech1} below)
$$\phi_{(\hK)}\, \|\X_{(\hK)}(\X_{(\hK)}^T\X_{(\hK)})^+ (\X_{(\hK)}^T\eps-\lambda z_{(\hK)}/2)\|^2_{2}\geq  \|\X_{(\hK)}^T\eps-\lambda z_{(\hK)}/2\|^2_{2}.$$
Finally, on the event $\mathcal{A}_{\lambda}$ we have $\|\X_{G_{k}}^T\eps-\lambda_{k} z^{G_{k}}/2\|_{2}\geq \lambda_{k}/4$ for all $k\in\hK$, so 
$$\|\X_{(\hK)}^T\eps-\lambda z_{(\hK)}/2\|^2_{2}\geq \sum_{k\in\hK}\lambda^2_{k} /16.$$
This allows to conclude.
\begin{lemma}\label{tech1}
Let $A$ be any $n\times d$ real matrix. Then for any $x$ in the range of $A^T$ we have
$$\|x\|^2_{2}\leq \varphi_{\max}(A^TA) \,\|A(A^TA)^+x\|^2_{2}$$
where $\varphi_{\max}(A^TA)$ denotes the largest eigenvalue of $A^TA$.
\end{lemma}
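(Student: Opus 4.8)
The plan is to reduce everything to the singular value decomposition of $A$, which diagonalizes $A^TA$ and its pseudo-inverse simultaneously and makes the role of the hypothesis $x\in\range(A^T)$ transparent.

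First I would write $A=U\Sigma V^T$ for a singular value decomposition, with $U$ ($n\times n$) and $V$ ($d\times d$) orthogonal and $\Sigma$ the $n\times d$ matrix carrying the singular values $s_1,\dots,s_r>0$ (where $r=\mathrm{rank}(A)$) on its diagonal. Writing $v_1,\dots,v_d$ for the columns of $V$ and $u_1,\dots,u_n$ for those of $U$, the family $\{v_j:\ s_j>0\}$ is an orthonormal basis of $\range(A^T)$, while $A^TA=V\Sigma^T\Sigma V^T$ has eigenvalues $s_j^2$, so that $\varphi_{\max}(A^TA)=\max_{j\le r}s_j^2$. The pseudo-inverse is $(A^TA)^+=V(\Sigma^T\Sigma)^+V^T$, which acts as multiplication by $s_j^{-2}$ on $v_j$ for $s_j>0$ and by $0$ on the remaining $v_j$.

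Next, since $x\in\range(A^T)$, I would expand $x=\sum_{j\le r}a_jv_j$, with no component along the null directions. Then $(A^TA)^+x=\sum_{j\le r}a_js_j^{-2}v_j$, and applying $Av_j=s_ju_j$ gives $A(A^TA)^+x=\sum_{j\le r}a_js_j^{-1}u_j$. By orthonormality of the $u_j$ and of the $v_j$ this yields
$$\|A(A^TA)^+x\|_2^2=\sum_{j\le r}\frac{a_j^2}{s_j^2},\qquad \|x\|_2^2=\sum_{j\le r}a_j^2.$$
Finally I would bound termwise: since $s_j^2\le\varphi_{\max}(A^TA)$ for every $j\le r$, we have $a_j^2\le \varphi_{\max}(A^TA)\,a_j^2/s_j^2$, and summing over $j\le r$ gives exactly $\|x\|_2^2\le\varphi_{\max}(A^TA)\,\|A(A^TA)^+x\|_2^2$.

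I do not expect a genuine obstacle; the only point requiring care is to use the hypothesis $x\in\range(A^T)$ to discard the null-space components of $x$, since a component of $x$ in $\ker(A)$ would be annihilated by $A(A^TA)^+$ and would break the inequality. An equivalent coordinate-free route avoiding the SVD is to note that $\range(A^T)=\range(A^TA)$, so that $A^TA(A^TA)^+x=x$, then to verify the identity $\|A(A^TA)^+x\|_2^2=x^T(A^TA)^+x$ using $(A^TA)^+(A^TA)(A^TA)^+=(A^TA)^+$, and to conclude with the spectral inequality $x^Tx\le\varphi_{\max}(A^TA)\,x^T(A^TA)^+x$ valid for $x\in\range(A^TA)$.
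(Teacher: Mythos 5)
Your proof is correct. Your main route differs in presentation from the paper's: you diagonalize everything via the SVD and verify the inequality termwise in the singular basis, whereas the paper argues coordinate-free. Specifically, the paper first establishes the identity $\|A(A^TA)^+x\|_2^2 = x^T(A^TA)^+A^TA(A^TA)^+x = x^T(A^TA)^+x$ from the Moore--Penrose relation, then uses $\range(A^T)=\range(A^TA)=\range((A^TA)^+)$ to bound the quadratic form from below by $\sigma_{\mathrm{rank}}((A^TA)^+)\,\|x\|_2^2$, where $\sigma_{\mathrm{rank}}$ denotes the smallest nonzero singular value, and concludes from the fact that this value is exactly $1/\varphi_{\max}(A^TA)$. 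This is precisely the "coordinate-free route" you sketch in your last paragraph, so in effect you have supplied both proofs. The trade-off: your SVD computation is fully explicit and makes transparent why the hypothesis $x\in\range(A^T)$ is indispensable (a component of $x$ in $\ker(A)$ is annihilated by $A(A^TA)^+$ and would make the right-hand side vanish while the left-hand side does not) --- a point the paper leaves implicit; the paper's argument is shorter, avoids choosing a basis, and isolates the single quadratic-form inequality that drives the bound. Both rest on the same spectral fact, namely that the nonzero eigenvalues of $(A^TA)^+$ are the reciprocals of those of $A^TA$.
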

\begin{proof}[Proof of Lemma~\ref{tech1}]
We first note that
$$\|A(A^TA)^+x\|^2_{2}=x^T(A^TA)^+A^TA(A^TA)^+x=x^T(A^TA)^+x.$$
Furthermore the range of $A^T$ coincides with the range of $A^TA$, which in turn is the same as the range of $(A^TA)^+$. 
We then have
$$\sigma_{{\rm rank}((A^TA)^+)}((A^TA)^+) \|x\|^2_{2}\leq x^T(A^TA)^+x$$
where $\sigma_{k}((A^TA)^+)$ is the $k$-th largest singular value of $(A^TA)^+$. The result follows from the equality
$$\left[\sigma_{{\rm rank}((A^TA)^+)}((A^TA)^+)\right]^{-1}=\sigma_{1}(A^TA)=\varphi_{\max}(A^TA).$$
\end{proof}

\end{document}